\definecolor{pgray}{gray}{0.8}
\newtheorem{theorem}{Theorem}[section]
\newtheorem{proposition}[theorem]{Proposition}
\newtheorem{remark}[theorem]{Remark}
\numberwithin{equation}{section}
\title{\mbox{}}
\begin{document}
\begin{center}
{\bf \LARGE{
	Large-time Behavior and Far Field Asymptotics of Solutions to the Navier-Stokes Equations
}}\\
\vspace{5mm}
\vspace{5mm}
{\sc\large
	Masakazu Yamamoto}\footnote{Graduate School of Science and Technology,
	Niigata University,
	Niigata 950-2181, Japan}\qquad
\end{center}
\maketitle
\vspace{-15mm}
\begin{abstract}
Asymptotic expansions of global solutions to the incompressible Navier-Stokes equation as $t$ tends to infinity with high-order is studied and large-time behavior of the expansion is clarified.
Furthermore, far field asymptotics also is derived.
Those expansions are provided without moment conditions on the initial velocity.
The Biot-Savard law together with the renormalization for the vorticity equations yields those expansions.
\end{abstract}
%

\section{Introduction}
We consider decay properties of solutions to the incompressible Navier-Stokes equations in $\mathbb{R}^n$.
In several preceding works, asymptotic expansion of a solution is provided.
Those expansions require the fast decay for an initial velocity.
On the other hand, it is well known that a solution to the Navier-Stokes equation has a slow decay-rate as $|x| \to +\infty$.
This special structure of the Navier-Stokes equation disturbs to derive the asymptotic expansion with high-order.
In this paper, we investigate the asymptotic expansion with high-order without the strong assumption for an initial velocity as $|x| \to +\infty$.
Here we study the following initial-value problem:
\begin{equation}\label{NS}
\left\{
\begin{array}{lr}
	\partial_t u + (u\cdot\nabla) u = \Delta u - \nabla p,
	&
	t>0,~ x \in \mathbb{R}^n,\\
	\nabla\cdot u = 0,
	&
	t>0,~ x \in \mathbb{R}^n,\\
	u(0,x) = a (x),
	&
	x \in \mathbb{R}^n,
\end{array}
\right.
\end{equation}
where $n \ge 2$ and $a = (a_1,\ldots,a_n)$ is an initial velocity.
Throughout this paper we assume the solenoidal condition that $\nabla \cdot a = 0$.
Uniqueness, smoothness and global existence on time of solutions are very important question for this problem (for those questions, see for example \cite{Gg-Mykw,Gg-Mykw-Osd,FrwgKznShr08,FjtKt,Kzn89JDE,KznOgwTnuch02,Wisslr} and references therein).
Now we treat a smooth and global solution $u$ which satisfies that
\begin{equation}\label{assmp}
	\left\| u(t) \right\|_{L^q (\mathbb{R}^n)}
	\le
	C (1+t)^{-\frac{n}2 (1-\frac1q)-\frac12}
\end{equation}
for $1 \le q \le \infty$.
This estimate is confirmed under several frameworks (cf. \cite{Brndls,Kt,Lry,Mykw98FE,Schnbk86,Schnbk91,Wgnr87,Wgnr94}), and gives the upper bound of the decay-rate of the solution.
The lower bound of the decay-rate as $t \to +\infty$ is provided by the asymptotic expansion.
For the heat equation, we see that the decay property of a solution as $|x| \to +\infty$ is inherited from an initial data.
Thus, for the heat equation, we can derive the asymptotic expansion with arbitrary high order if we assume the fast decay for the initial data.
Whereas for \eqref{NS}, decay of $u$ as $|x| \to +\infty$ is not controlled by $a$.
Namely, even if $a \in C_0 (\mathbb{R}^n)$, then
\begin{equation}\label{spdecay}
	u(t,x) = O (|x|^{-n-1})
\end{equation}
as $|x| \to + \infty$ for any fixed $t > 0$ (cf. \cite{Brndls-Vgnrn}).
Moreover, pointwise decay of the solution is studied by many authors (see for example \cite{AmrchGrltSchnbk,Mykw02FE}).
When we try to introduce an asymptotic expansion with high-order of the similar form as in the preceding works, it is necessary that $u$ decays as $|x| \to +\infty$ sufficiently fast.
Hence the polynomial decay \eqref{spdecay} is cumbersome.
Furthermore, we are interested to far field asymptotics of the solution.
The similar problem is appearing in several dissipative equations with anomalous diffusion.
The lower bound of the decay-rate as $|x| \to +\infty$ of solutions to a semi-linear anomalous diffusion equation is studied (see \cite{Brndls-Krch,Y}).
To solve behavior as $|x| \to +\infty$ for the velocity, we employ the vorticity tensor.
The vorticity tensor $\omega_{ij} = \partial_i u_j - \partial_j u_i$ for $1 \le i,j \le n$ fulfills that
\begin{equation}\label{v}
	\partial_t \omega_{ij} - \Delta \omega_{ij} + \sum_{h = 1}^n \left( \partial_i (\omega_{hj} u_h) - \partial_j (\omega_{hi} u_h) \right) = 0,
\end{equation}
where $u_i$ is the $i$-th component of the velocity.
Moreover $\omega$ gives the velocity through the Biot-Savard law:
\begin{equation}\label{BS}
	u_j = - \sum_{k=1}^n \partial_k (-\Delta)^{-1} \omega_{kj}.
\end{equation}
Indeed, since $\nabla \cdot u = 0$, we see $\sum_{k=1}^n \partial_k \omega_{kj} = \Delta u_j - \partial_j \nabla \cdot u = \Delta u_j$.
We emphasize that decay of $\omega$ as $|x| \to +\infty$ is controlled by an initial vorticity.
Therefore an asymptotic expansion of $\omega$ as $|x| \to +\infty$ with arbitrary high-order can be defined.
This fact together with the Biot-Savard law derive an asymptotic expansion for $u$ with high-order.
Those idea firstly are established by Kukavica and Reis \cite{Kkvc-Ris}, and they showed the following estimate:
For $2 \le q \le \infty,~ m \ge 2$ and $0 \le \mu < m+n(1 - \frac1q)$,
\[
\begin{split}
	&\biggl\| |x|^\mu \biggl(
		u_j (t)
		+
		\sum_{2 \le |\alpha| \le m} \sum_{k = 1}^n
			\frac{\nabla^\alpha \partial_k (-\Delta)^{-1} G (t)}{\alpha!} \int_{\mathbb{R}^n} (-y)^\alpha \omega_{kj} (t,y) dy
	\biggr) \biggr\|_{L^q (\mathbb{R}^n)}\\
	&=
	O \left( t^{-\frac{n}2 (1-\frac1q) - \frac12 + \frac\mu2} \right)
\end{split}
\]
%
as $t \to +\infty$.
This estimate gives the asymptotic expansion of $u$ as $|x| \to +\infty$ with arbitrary high-order.
However behavior of the coefficients $\int_{\mathbb{R}^n} (-y)^\alpha \omega (t,x) dx$ as $t \to +\infty$ is not clear, and the lower bound of the decay rate as $t \to +\infty$ is not derived.
Our goal is to clarify asymptotic profiles of $u$ as $t \to +\infty$.
Furthermore we derive the decay-rate of the solution respect to both the space and the time variables.
From \eqref{v} and \eqref{BS}, the vorticity satisfies that
\begin{equation}\label{MSomega}
\begin{split}
	\omega_{ij} (t)
	=&
	G(t) * \omega_{0ij}
	+
	\sum_{h = 1}^n \int_0^t
		\partial_j G(t-s) * \left( \omega_{hi} u_h \right) (s)
	ds
	-
	\sum_{h = 1}^n \int_0^t
		\partial_i G(t-s) * \left( \omega_{hj} u_h \right) (s)
	ds,
\end{split}
\end{equation}
where $\omega_{0ij} = \partial_i a_j - \partial_j a_i$.
The top term of the nonlinear term as $t \to +\infty$ is vanishing since
\[
	\sum_{h=1}^n \int_{\mathbb{R}^n} (\omega_{hi} u_h) (s,y) dy
	=
	\sum_{h=1}^n \int_{\mathbb{R}^n}
		\left( (\partial_h u_i - \partial_i u_h) u_h \right) (s,y)
	dy
	=
	- \int_{\mathbb{R}^n} \left( u_i \nabla \cdot u + \tfrac12 \partial_i (|u|^2) \right) dy
	= 0.
\]
Applying the Biot-Savard law to \eqref{MSomega}, we see that
\begin{equation}\label{MSu}
\begin{split}
	u_j (t)
	=&
	G(t) * a_j
	-
	\sum_{k,h=1}^n \int_0^t
		R_k R_j G (t-s) * \left( \omega_{hk} u_h \right) (s)
	ds
	-
	\sum_{h = 1}^n \int_0^t
		G(t-s) * \left( \omega_{hj} u_h \right) (s)
	ds,
\end{split}
\end{equation}
where $R_k = \partial_k (-\Delta)^{-1/2}$ is the Riesz transform.
The term of the initial velocity is represented by $G(t) * a_j = - \sum_{k=1}^n R_k (-\Delta)^{-1/2} G(t) * \omega_{0kj}$.
The velocity often is given by
\begin{equation}\label{MSuM}
	u_j (t) = G(t) * a_j - \sum_{k,h=1}^n \int_0^t \partial_h G(t-s) * P_{jk} (u_h u_k) (s) ds,
\end{equation}
where $P_{jk}$ is the Helmholtz-Fujita-Kato projection.
Before considering the behavior of the solution, we confirm that this equation and \eqref{MSu} are equivalent.
Indeed, from the solenoidal condition, the nonlinear term of \eqref{MSuM} is converted to the following:
\[
\begin{split}
	&\sum_{k,h=1}^n \int_0^t \partial_h G(t-s) * P_{jk} (u_h u_k) (s) ds\\
	&=
	\sum_{k,h=1}^n \int_0^t R_k R_j G(t-s)*(u_h \partial_h u_k) ds
	+
	\sum_{h=1}^n \int_0^t G(t-s) * (u_h \partial_h u_j) ds\\
	&=
	\sum_{k,h=1}^n \int_0^t R_k R_j G(t-s)*(u_h \omega_{hk}) ds
	+
	\sum_{h=1}^n \int_0^t G(t-s) * (u_h \omega_{hj}) ds\\
	&+\frac12 \sum_{k=1}^n \int_0^t R_k R_j \partial_k G(t-s) * (|u|^2) (s) ds
	+\frac12 \int_0^t \partial_j G(t-s) * (|u|^2) (s) ds.
\end{split}
\]
The last two terms are canceled since
\[
	\sum_{k=1}^n R_k R_j \partial_k \varphi = \sum_{k=1}^n \mathcal{F}^{-1} \left[ \frac{i\xi_k}{|\xi|} \frac{i\xi_j}{|\xi|} i \xi_k \hat{\varphi} \right]
	= - \mathcal{F}^{-1} \left[ i\xi_j \hat{\varphi} \right] = -\partial_j \varphi
\]
for any suitable function $\varphi$.
Throughout this paper we denote the velocity by \eqref{MSu}.
The asymptotic expansion of $u$ as $t\to+\infty$ with lower-order is given by
\[
\begin{split}
	U_{j;m} (t)
	=&
	- \sum_{|\alpha| = m+1} \sum_{k=1}^n
		\frac{\nabla^\alpha R_k (-\Delta)^{-1/2} G(t)}{\alpha!}
		\int_{\mathbb{R}^n} (-y)^\alpha \omega_{0kj} (y) dy\\
	&-
	\sum_{2l + |\beta| = m} \sum_{h=1}^n \frac{\partial_t^l \nabla^\beta G (t)}{l! \beta!}
	\int_0^\infty \int_{\mathbb{R}^n}
		(-s)^l (-y)^\beta \left( \omega_{hj} u_h \right) (s,y)
	dyds
\end{split}
\]
and
\[
\begin{split}
	U_{j;m}^T (t)
	=&
	-
	\sum_{2l + |\beta| = m} \sum_{k,h=1}^n \frac{\partial_t^l \nabla^\beta R_k R_j G (t)}{l! \beta!}
	\int_0^\infty\int_{\mathbb{R}^n}
	(-s)^l (-y)^\beta \left( \omega_{hk} u_h \right) (s,y)
	dyds
\end{split}
\]
for $1 \le m \le n$.
Namely, $U_m = (U_{j;m})_{j=1}^n$ and $U_m^T = (U_{j;m}^T)_{j=1}^n$ imply that
\[
	\left\| U_{m} (t) \right\|_{L^q (\mathbb{R}^n)}
	=
	t^{-\frac{n}2 (1-\frac1q) - \frac{m}2} \left\| U_{m} (1) \right\|_{L^q (\mathbb{R}^n)}
\]
and
\[
	\left\| U_{m}^T (t) \right\|_{L^q (\mathbb{R}^n)}
	=
	t^{-\frac{n}2 (1-\frac1q) - \frac{m}2} \left\| U_{m}^T (1) \right\|_{L^q (\mathbb{R}^n)}
\]
for $1 \le q \le \infty$ and $t > 0$.
Furthermore the following estimate holds.
\begin{proposition}\label{C-FM}
	Let $n \ge 2,~ \omega_0 \in L^1 (\mathbb{R}^n) \cap L^\infty (\mathbb{R}^n)$ and $|x|^{n+1} \omega_0 \in L^1 (\mathbb{R}^n)$, and a solution $u$ of \eqref{NS} with $a_j = - \sum_{k=1}^n R_k (-\Delta)^{-1/2} \omega_{0kj}$ satisfy \eqref{assmp}.
	Then
	\[
		\Bigl\| u(t) - \sum_{k=1}^n \left( U_k + U_k^T \right) (t) \Bigr\|_{L^q (\mathbb{R}^n)}
		=
		o \bigl( t^{-\frac{n}2 (1-\frac1q) - \frac{n}2} \bigr)
	\]
	as $t \to +\infty$ holds for $1 \le q \le \infty$.
	In addition, if $|x|^{n+2} \omega_0 \in L^1 (\mathbb{R}^n)$, then
	\[
		\Bigl\| u(t) - \sum_{k=1}^n \left( U_k + U_k^T \right) (t) \Bigr\|_{L^q (\mathbb{R}^n)}
		=
		O \bigl( t^{-\frac{n}2 (1-\frac1q) - \frac{n}2 - \frac12} \log (2+t)  \bigr)
	\]
	as $t \to +\infty$.
\end{proposition}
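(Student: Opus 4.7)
My plan is to start from \eqref{MSu}, with the initial term already rewritten in Biot-Savart form $G(t) * a_j = -\sum_k R_k (-\Delta)^{-1/2} G(t) * \omega_{0kj}$, and decompose
\[
u_j(t) - \sum_{m=1}^n (U_{j;m} + U_{j;m}^T)(t) = E^L + E^N + E^{N'},
\]
where $E^L$ is the remainder from spatially Taylor-expanding the linear convolution against $\omega_{0kj}$ in the moments of $\omega_0$, and $E^N, E^{N'}$ are the remainders of the Riesz-dressed and un-dressed nonlinear terms after a joint space-time Taylor expansion of the kernel $K(t-s, x-y)$ in $(s, y)$ around $(0, 0)$ up to total order $2l + |\beta| \le n$, combined with the extension of the $s$-integration from $[0, t]$ to $[0, \infty)$.

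For $E^L$, the standard heat-kernel moment estimate gives
\[
\Bigl\| G(t) * \omega_{0kj} - \sum_{|\alpha| \le n+1} \frac{\nabla^\alpha G(t)}{\alpha!} \int_{\mathbb{R}^n}(-y)^\alpha \omega_{0kj}(y)\,dy \Bigr\|_{L^q} = o\bigl( t^{-\frac{n}{2}(1-\frac{1}{q}) - \frac{n+1}{2}} \bigr)
\]
under $|x|^{n+1} \omega_0 \in L^1$ (by dominated convergence on the Taylor remainder), refined to $O(t^{-\frac{n}{2}(1-\frac{1}{q}) - \frac{n+2}{2}})$ under the stronger hypothesis. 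The $|\alpha| = 0, 1$ contributions are not present in $\sum_m U_{j;m}$: $|\alpha| = 0$ vanishes because $\int \omega_{0kj} = 0$ (integration by parts, valid since $a$ has Biot-Savart decay); $|\alpha| = 1$ is removed after applying $\sum_k R_k(-\Delta)^{-1/2}$ via $\sum_k R_k(-\Delta)^{-1/2} \partial_k = -I$ and the antisymmetry $\omega_{0kj} = -\omega_{0jk}$. $L^q$-boundedness of the Riesz transforms then transfers the estimate to $E^L$.

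The nonlinear errors $E^N$ and $E^{N'}$ each split into a space-time Taylor remainder on $[0, t]$ and a tail term $\int_t^\infty$ that compensates for the extension of the $s$-integral in the profile from $[0, t]$ to $[0, \infty)$. The main preparation---and the main obstacle---is the weighted bound
\[
\bigl\| |x|^\mu \omega(s) \bigr\|_{L^p} \le C\, s^{-\frac{n}{2}(1-\frac{1}{p}) - \frac{1}{2} + \frac{\mu}{2}}
\]
for $0 \le \mu \le n+1$ (respectively $n+2$), established by induction on $\mu$ through the vorticity integral equation \eqref{MSomega}: the spatial weight $|x|^\mu$ is preserved under convolution with $G(t-s)$, and the weighted norm of $\omega u$ is controlled via H\"older, \eqref{assmp}, and the inductive hypothesis. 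Combined with \eqref{assmp}, this ensures $\int_0^\infty s^l \int |y|^{|\beta|+1} |(\omega_{hi} u_h)(s, y)|\,dy\,ds < \infty$ for $2l + |\beta| \le n$, which controls both the tail and the Taylor remainder at rate $t^{-\frac{n}{2}(1-\frac{1}{q}) - \frac{n}{2} - \frac{1}{2}}$ (sharpened to $o(\cdot)$ in the first assertion by dominated convergence). The logarithm in the second assertion arises at the borderline case $2l + |\beta| = n$ of the tail, where the $s$-integration is marginal and produces $\int_1^t s^{-1}\,ds \sim \log(2+t)$; in every other case the bound is the strict $O$-rate claimed.
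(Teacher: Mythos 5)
Your architecture --- splitting $u_j-\sum_{m\le n}(U_{j;m}+U_{j;m}^T)$ into a linear Taylor remainder, nonlinear space--time Taylor remainders on $(0,t)$, and the tails created by extending the $s$-integral to $(0,\infty)$ --- is exactly the decomposition $r_{0,n}+\cdots+r_{3,n}+r_{4,n}$ used in Section 3 of the paper (which gives no standalone proof of Proposition \ref{C-FM}, attributing it to Carpio and Fujigaki--Miyakawa; the $m=0$ instance of \eqref{s0} together with \eqref{usa0} recovers it). The genuine gap is in your key lemma. The weighted vorticity bound you state, $\| |x|^\mu \omega(s)\|_{L^p}\le C s^{-\frac n2(1-\frac1p)-\frac12+\frac\mu2}$, is too weak by a factor $(1+s)^{1/2}$: the correct rate (Proposition \ref{prop-decay-wtomg}) is $C t^{-\frac n2(1-\frac1p)}(1+t)^{-1+\frac\mu2}$, reflecting that $\omega$ carries one more derivative than $u$ and hence decays faster than \eqref{assmp}. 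With your rate one gets $s^l\,\| |y|^{|\beta|}(\omega_{hk}u_h)(s)\|_{L^1}\lesssim (1+s)^{-1}$ for $2l+|\beta|=n$, so the coefficients $\int_0^\infty\!\int (-s)^l(-y)^\beta \omega_{hk}u_h\,dy\,ds$ that define $U_n$ and $U_n^T$ would diverge logarithmically and the expansion itself would be ill-posed; the correct rate gives $(1+s)^{-3/2}$ and convergence.

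Second, even with the corrected rate, your claim that $\int_0^\infty s^l\int |y|^{|\beta|+1}\,|\omega_{hi}u_h|\,dy\,ds<\infty$ for all $2l+|\beta|\le n$ fails at the top order: for $2l+|\beta|=n$ the integrand is $\asymp(1+s)^{-1}$, which is exactly log-divergent. So you cannot control the order-$(n+1)$ Taylor remainder by a single convergent integral over $(0,\infty)$; it must be kept as an integral over $(0,t)$ (with the $(0,t/2)$--$(t/2,t)$ splitting the paper uses for $\rho_1,\dots,\rho_5$ and $r_{1,1,n+m},\dots,r_{1,5,n+m}$), and the truncated integral $\int_0^{t}(1+s)^{-1}\,ds$ is precisely where the $\log(2+t)$ of the second assertion originates --- in the borderline spatial Taylor remainder, not in the tail $\int_t^\infty$, which contributes a clean relative gain of $O(t^{-1/2})$ at $2l+|\beta|=n$. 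Once the lemma is stated with the correct exponent and the borderline term is handled by truncation rather than by an asserted finite integral, your outline closes along the same lines as the paper.
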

Proposition \ref{C-FM} is a representation of the assertion in the preceding works via Carpio\cite{Crpo}, and Fujigaki and Miyakawa\cite{Fjgk-Mykw}.
In two dimensional case, the similar estimate is provided without the moment condition on the initial data, and the coefficients on the expansion are clarified (see \cite{Okb18}).
Moreover, in this preceding work, the similar estimates on the Hardy space draw a spatial decay of the solution.
Here we choose the other way to lead the spatial decay, i.e., we study the estimate with the polynomial weight.
To this we introduce the following functions for $1 \le m \le n$:
\[
	U_{j;m}^S (t)
	=
	-\sum_{2l+|\beta|=m} \sum_{k,h=1}^n \frac{\partial_t^l \nabla^\beta R_k R_j G(t)}{l!\beta!} \int_0^t \int_{\mathbb{R}^n}
		(-s)^l (-y)^\beta \bigl( \omega_{hk} u_h \bigr) (s,y)
	dyds.
\]
Behavior of the coefficients $\int_0^t \int_{\mathbb{R}^n} (-s)^l (-y)^\beta \bigl( \omega_{hk} u_h \bigr) (s,y) dyds$ as $t \to +\infty$ are not clear.
However, for $U_m^S = (U_{j;m}^S)_{j=1}^n$, we see that
\begin{equation}\label{usa0}
	\bigl\| U_{m}^T (t) - U_{m}^S (t) \bigr\|_{L^q (\mathbb{R}^n)}
	\le
	C t^{-\frac{n}2 (1-\frac1q) - \frac{n}2} (1+t)^{ - \frac12}
\end{equation}
for $1 \le q \le \infty$ (see the sentences under the proof of Proposition \ref{prop-decay-wtomg} in Section 2).
Furthermore this function fulfills the following weighted estimate.
\begin{proposition}\label{wasymp-l}
	Let $n \ge 2,~ \omega_0 \in L^1 (\mathbb{R}^n) \cap L^\infty (\mathbb{R}^n)$ and $|x|^{n+2} \omega_0 \in L^1 (\mathbb{R}^n)$, and a solution $u$ of \eqref{NS} with $a_j = - \sum_{k=1}^n R_k (-\Delta)^{-1/2} \omega_{0kj}$ satisfy \eqref{assmp}.
	Then
	\begin{equation}\label{nz}
		\left\| |x|^\mu \Bigl( u(t) - \sum_{k=1}^n \left( U_k + U_k^S \right) (t) \Bigr) \right\|_{L^q (\mathbb{R}^n)}
		=
		O \bigl( t^{-\frac{n}2 (1-\frac1q) - \frac{n}2 - \frac12 + \frac\mu{2}} \log (2+t)  \bigr)
	\end{equation}
	as $t \to +\infty$ holds for $q=1$ and $0 \le \mu \le n-1$, and for $1 < q \le \infty$ and $0 \le \mu \le n$.
\end{proposition}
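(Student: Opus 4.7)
The plan is to write $V_j(t):=u_j(t)-\sum_{k=1}^n(U_{j;k}(t)+U_{j;k}^S(t))$ using \eqref{MSu} as a sum of three remainders: (i) a linear remainder $V_j^{L}$ from $G(t)*a_j$ minus the initial-data contributions to $U_{j;k}$; (ii) a tensor-type nonlinear remainder $V_j^{T}$ from $-\sum_{k,h}\int_0^t R_kR_jG(t-s)*(\omega_{hk}u_h)(s)\,ds$ minus $\sum_k U_{j;k}^S$; and (iii) a scalar-type nonlinear remainder $V_j^{N}$ from $-\sum_h\int_0^t G(t-s)*(\omega_{hj}u_h)(s)\,ds$ minus the nonlinear contributions to $\sum_k U_{j;k}$. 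Each remainder is bounded in the weighted $L^q$ norm separately, and the triangle inequality yields \eqref{nz}.

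For $V_j^{L}$, apply the Biot--Savart formula $a_j=-\sum_l R_l(-\Delta)^{-1/2}\omega_{0lj}$ so that $G(t)*a_j=-\sum_l R_l(-\Delta)^{-1/2}G(t)*\omega_{0lj}$, and Taylor expand the kernel $R_l(-\Delta)^{-1/2}G(t)(x-y)$ in $y$ around $y=0$ up to order $n+1$. The expansion terms of orders $|\alpha|=2,\ldots,n+1$ match exactly the initial-data contributions in $U_{j;1},\ldots,U_{j;n}$; the $|\alpha|=0$ term vanishes since $\int\omega_{0lj}\,dy=0$, and the $|\alpha|=1$ term vanishes by the solenoidal condition on $a$. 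For the order-$(n+2)$ remainder, use $|x|^\mu\le C(|x-\theta y|^\mu+|y|^\mu)$ to distribute the weight between the kernel and the datum, combine with the weighted kernel estimate $\||x|^\mu\nabla^{n+2}R_l(-\Delta)^{-1/2}G(t)\|_{L^q}\le Ct^{-\frac n2(1-\frac1q)-\frac{n+1}{2}+\frac\mu 2}$ in the stated $(q,\mu)$-range, and invoke the moment hypothesis $|y|^{n+2}\omega_0\in L^1$; this yields the desired bound for $V_j^{L}$ with no logarithm.

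For $V_j^{T}$ and $V_j^{N}$, Taylor expand the kernels $R_kR_jG(t-s)(x-y)$ and $G(t-s)(x-y)$ respectively around $(s,y)=(0,0)$ with the parabolic weight $2l+|\beta|\le n$. For $V_j^{T}$, because $U_{j;k}^S$ uses the truncated integral $\int_0^t$, the difference is a pure parabolic Taylor remainder; split $\int_0^t=\int_0^{t/2}+\int_{t/2}^t$, apply the order-$(n+1)$ remainder formula with weighted kernel bounds and the weighted bilinear estimates on $(\omega_{hk}u_h)(s,\cdot)$ deduced from \eqref{assmp} and Proposition~\ref{prop-decay-wtomg} on $[0,t/2]$, and estimate each individual term directly on $[t/2,t]$. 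For $V_j^{N}$, since the nonlinear parts of $U_{j;k}$ integrate over $[0,\infty)$, the same analysis on $[0,t]$ produces the parabolic Taylor remainder plus an extra tail $\sum_{2l+|\beta|\le n}\frac{\partial_t^l\nabla^\beta G(t)}{l!\beta!}\int_t^\infty\!\int(-s)^l(-y)^\beta(\omega_{hj}u_h)(s,y)\,dy\,ds$, controlled by weighted heat-kernel estimates combined with the decay of $\|(\omega_{hj}u_h)(s,\cdot)\|_{L^1}$ as $s\to\infty$. The $\log(2+t)$ factor enters through the borderline integral $\int_{t/2}^t(t-s)^{-1/2}(1+s)^{-\gamma}\,ds$ appearing in the $[t/2,t]$ estimates.

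The main obstacle is obtaining the weighted $L^q$-convolution bounds at the endpoints $q=1$ and $q=\infty$, where $R_kR_j$ fails to be $L^q$-bounded. This is resolved by observing that every term in the Taylor remainder carries at least one derivative, so $\partial_t^l\nabla^\beta R_kR_jG$ reduces, via $R_kR_j=\partial_k\partial_j(-\Delta)^{-1}$ together with $\partial_tG=\Delta G$, to second-order spatial derivatives applied to Gaussian integrals, which enjoy weighted $L^q$-bounds for every $q\in[1,\infty]$. A secondary difficulty is producing uniform-in-$s$ weighted bilinear estimates on $(\omega_{hk}u_h)(s,\cdot)$ with enough decay for the tail and $[t/2,t]$-pieces; this follows by combining \eqref{assmp} with the weighted vorticity bounds of Proposition~\ref{prop-decay-wtomg} via H\"older's inequality.
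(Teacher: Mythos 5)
Your decomposition is exactly the paper's: your $V_j^{L}$, $V_j^{T}$ and $V_j^{N}$ coincide with the remainders $r_{0,n}$, $r_{1,n}$ and $r_{2,n}+r_{3,n}$ of the identity \eqref{s0}, and the paper does prove Proposition \ref{wasymp-l} by estimating precisely these quantities (the proof is folded into the proof of Theorem \ref{thm-st}; see the remark at the head of Section 3). The gap is in how you place the spatial weight. For $V_j^{L}$ you Taylor expand to order $n+2$ and then invoke $|x|^\mu\le C(|x-\theta y|^\mu+|y|^\mu)$: the cross term, where the weight lands on the datum together with $(-y)^\alpha$, $|\alpha|=n+2$, requires $|y|^{n+2+\mu}\omega_0\in L^1(\mathbb{R}^n)$, whereas only $|y|^{n+2}\omega_0\in L^1(\mathbb{R}^n)$ is assumed; for $\mu$ near $n$ you would need moments of order $2n+2$. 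The same failure occurs, more severely, in $V_j^{T}$ and $V_j^{N}$: transferring $|y|^{\mu}$ onto $(-y)^\beta(\omega_{hk}u_h)(s,y)$ with $|\beta|=n+1$ calls for $\| |y|^{n+1+\mu}(\omega_{hk}u_h)(s)\|_{L^1(\mathbb{R}^n)}$, which via Proposition \ref{prop-decay-wtomg} is only available when $|x|^{n+1+\mu}\omega_0\in L^1(\mathbb{R}^n)$ --- far beyond the hypothesis once $\mu>1$. So as written the weighted estimates do not close for the relevant range of $\mu$.

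The paper's remedy, absent from your outline, is to split the $y$-integration into $|y|\le |x|/2$ and $|y|\ge |x|/2$ (the regions $Q_1,\dots,Q_5$ of \eqref{Q}, and the analogous splitting for $r_{0,n+m}$). On $|y|\le|x|/2$ one has $|x-\lambda y|\ge|x|/2$, so the entire weight $|x|^\mu$ stays on the kernel and the full-order expansion is used with only the assumed moments. On $|y|\ge|x|/2$ one uses $|x|^\mu\le 2^\mu|y|^\mu$ but expands only to first or second order (cf.\ \eqref{Taylor} with $l_1=1,2$), so the total moment charged to $\omega_0$ or to $\omega_{hk}u_h$ is $\mu+1$ or $\mu+2\le n+2$. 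This choice of low expansion order on the far region is also what produces the asymmetric admissible ranges ($\mu\le n-1$ for $q=1$ versus $\mu\le n$ for $1<q\le\infty$); your argument offers no mechanism for that distinction. Two smaller inaccuracies: the logarithm arises from the borderline time integral $\int_0^t s^{-1/2}(1+s)^{-1/2}\,ds\sim\log(2+t)$ in the full-range Taylor term (as in the bound for $r_{1,4,n+m}$), not from $\int_{t/2}^t(t-s)^{-1/2}(1+s)^{-\gamma}\,ds$, which carries no logarithm; and the vanishing of the first moments $\int y_l\omega_{0kj}\,dy$ follows from integration by parts applied to $\omega_{0kj}=\partial_k a_j-\partial_j a_k$ (as in the proof of Proposition \ref{prop-decay-wtomg}), not from the solenoidal condition $\nabla\cdot a=0$.
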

Proposition \ref{wasymp-l} provides behavior of the velocity as $|x| \to + \infty$.
Indeed $\| |x|^\mu (U_k+U_k^S) \|_{L^q (\mathbb{R}^n)}$ may diverge to infinity for large $\mu$ and small $q$ (for the details of this argument, see \cite{Brndls-Krch}).
The assertion \eqref{nz} with $\mu < n(1-\frac1q)+1$ also provides the spatial profile of the velocity (cf.\cite{ChJn}).
However, to describe the far field asymptotics, we should choose $\mu$ and $q$ such that $\mu \ge n(1-\frac1q)+1$.
Propositions \ref{C-FM} and \ref{wasymp-l} give the asymptotic expansion with $n$-th order.
The renormalization yields one with higher-order, and this method requires asymptotic behavior of $\omega$.
Here we give the asymptotic expansion of $\omega$ of the Escobedo-Zuazua \cite{EZ} type.
For $2 \le m \le n+1$, let
\begin{equation}\label{defOmg}
\begin{split}
	\Omega_{hk;m} (t)
	=&
	\sum_{|\alpha|=m} \frac{\nabla^\alpha G(t)}{\alpha!} \int_{\mathbb{R}^n} (-y)^\alpha \omega_{0hk} (y) dy\\
	&+
	\sum_{2l + |\beta|=m-1} \sum_{j=1}^n \frac{\partial_t^l \nabla^\beta \partial_k G (t)}{l!\beta!} \int_0^\infty \int_{\mathbb{R}^n}
		(-s)^l (-y)^\beta \left( \omega_{jh} u_j \right) (s,y)
	dy ds\\
	&-
	\sum_{2l + |\beta| = m-1} \sum_{j=1}^n \frac{\partial_t^l \nabla^\beta \partial_h G (t)}{l!\beta!} \int_0^\infty \int_{\mathbb{R}^n}
		(-s)^l (-y)^\beta \left( \omega_{jk} u_j \right) (s,y)
	dyds
\end{split}
\end{equation}
and  $\Omega_m = (\Omega_{hk;m})_{h,k=1}^n$, then we have that $\| \Omega_{m} (t) \|_{L^q (\mathbb{R}^n)} = t^{-\frac{n}2 (1-\frac1q) - \frac{m}2} \| \Omega_{m} (1) \|_{L^q (\mathbb{R}^n)}$ for $1 \le q \le \infty$ and $t > 0$.
We define the tensor $\mathcal{I}_{n+p} = (\mathcal{I}_{hk;n+p} (t,x))_{h,k=1}^n$ for $1 \le p \le n+2$ by
\[
	\mathcal{I}_{hk;n+p} (t)
	=
	\left\{
	\begin{array}{lr}
	\displaystyle
	\sum_{i=1}^{p-2} \left( \Omega_{hk;p-i} \left( U_{h;i} + U_{h;i}^T \right) \right) (t),
	&
	3 \le p \le n+2,\\[15pt]
	0,
	&
	1 \le p \le 2.
	\end{array}
	\right.
\]
Then $\lambda^{2n+p} \mathcal{I}_{n+p} (\lambda^2 t, \lambda x) = \mathcal{I}_{n+p} (t,x)$ for $\lambda > 0$, and thus
\begin{equation}\label{decay-I}
	\bigl\| |x|^\mu \mathcal{I}_{n+p} (t) \bigr\|_{L^q (\mathbb{R}^n)}
	=
	t^{-\frac{n}2 (1-\frac1q) - \frac{n}2 - \frac{p}2 + \frac\mu{2}} \bigl\| |x|^\mu \mathcal{I}_{n+p} (1) \bigr\|_{L^q (\mathbb{R}^n)}
\end{equation}
for $1 \le q \le \infty$ and $\mu \ge 0$, and
\begin{equation}\label{int-I}
	\int_{\mathbb{R}^n}
		(-x)^\beta \mathcal{I}_{n+p} (t,x)
	dx
	=
	t^{-\frac{n}2-\frac{p}2+\frac{|\beta|}2} \int_{\mathbb{R}^n}
		(-x)^\beta \mathcal{I}_{n+p} (1,x)
	dx
\end{equation}
for $\beta \in \mathbb{Z}_+^n$.
The functions $\mathcal{I}_{hk;n+p}$ build an approximation of $\omega_{hk} u_h$ (see \eqref{bilin-asymp} and \eqref{bilin-asymp-wt}).
By using $\mathcal{I}_{n+p}$, we introduce some functions for $1 \le m \le n$:
\[
\begin{split}
	U_{j;n+m} (t)
	=&
	-\sum_{|\alpha| = n+m+1} \sum_{k=1}^n \frac{\nabla^\alpha R_k (-\Delta)^{-1/2} G(t)}{\alpha!} \int_{\mathbb{R}^n}
		(-y)^\alpha \omega_{0kj} (y)
	dy\\
	&-
	\sum_{2l+|\beta| = n+m} \frac{\partial_t^l \nabla^\beta G(t)}{l!\beta!} \sum_{h=1}^n \int_0^\infty \int_{\mathbb{R}^n}
		(-s)^l (-y)^\beta \bigl( \omega_{hj} u_h (s,y) - \sum_{p=3}^{m+1} \mathcal{I}_{hj;n+p} (s,y)\\
		&\hspace{15mm} - \mathcal{I}_{hj;n+m+2} (1+s,y) \bigr)
	dyds,\\
	U_{j;n+m}^T (t)
	=&
	-\sum_{2l+|\beta| = n+m} \sum_{k,h=1}^n \frac{\partial_t^l \nabla^\beta R_k R_j G(t)}{l!\beta!} \int_0^\infty \int_{\mathbb{R}^n}
		(-s)^l (-y)^\beta \bigl( \omega_{hk} u_h (s,y) - \sum_{p=3}^{m+1} \mathcal{I}_{hk;n+p} (s,y)\\
		&\hspace{15mm} - \mathcal{I}_{hk;n+m+2} (1+s,y) \bigr)
	dyds,
\end{split}
\]
\[
\begin{split}
	K_{j;n+m} (t)
	=&
	-\sum_{2l+|\beta|=n+m} \sum_{k,h=1}^n \frac{\partial_t^l \nabla^\beta R_k R_j G(t)}{l!\beta!} \int_0^t s^l (1+s)^{-\frac{n}2-\frac{m}2-1 + \frac{|\beta|}2} ds
	\int_{\mathbb{R}^n} (-1)^l (-y)^\beta \mathcal{I}_{hk;n+m+2} (1,y) dy\\
	&-
	\sum_{2l+|\beta|=n+m} \frac{\partial_t^l \nabla^\beta G(t)}{l!\beta!} \int_0^t s^l (1+s)^{-\frac{n}2-\frac{m}2-1 + \frac{|\beta|}2} ds
	\sum_{h=1}^n \int_{\mathbb{R}^n} (-1)^l (-y)^\beta \mathcal{I}_{hj;n+m+2} (1,y) dy,\\
	V_{j;n+m} (t)
	=
	&\sum_{2l+|\beta|=1}^{n+m-1} \frac{2 t^{-\frac{n}2-\frac{m}2+l+\frac{|\beta|}2}}{n+m-2l-|\beta|}
	\frac{\partial_t^l \nabla^\beta G(t)}{l!\beta!}
	\sum_{h=1}^n \int_{\mathbb{R}^n} (-1)^l (-y)^\beta \mathcal{I}_{hj;n+m+2} (1,y) dy,\\
	V_{j;n+m}^T (t)
	=
	&\sum_{2l+|\beta|=1}^{n+m-1} \frac{2 t^{-\frac{n}2-\frac{m}2+l+\frac{|\beta|}2}}{n+m-2l-|\beta|} \sum_{k,h=1}^n \frac{\partial_t^l \nabla^\beta R_k R_j G(t)}{l!\beta!}
	\int_{\mathbb{R}^n} (-1)^l (-y)^\beta \mathcal{I}_{hk;n+m+2} (1,y) dy,\\
	\tilde{V}_{j;n+m} (t)
	=&
	-\sum_{2l+|\beta|=n+m-2} \frac{\partial_t^l \nabla^\beta G(t)}{l!\beta!}
	\int_t^\infty
		s^l \bigl( (1+s)^{-\frac{n}2-\frac{m}2+\frac{|\beta|}2} - s^{-\frac{n}2-\frac{m}2+\frac{|\beta|}2} \bigr)
	ds\\
	&\hspace{5mm}\times
	\sum_{h=1}^n \int_{\mathbb{R}^n}
		(-1)^l (-y)^\beta \mathcal{I}_{hj;n+m} (1,y)
	dy,\\
	\tilde{V}_{j;n+m}^T (t)
	=&
	-\sum_{2l+|\beta|=n+m-2} \sum_{k,h=1}^n \frac{\partial_t^l \nabla^\beta R_k R_j G(t)}{l!\beta!}
	\int_t^\infty
		s^l \bigl( (1+s)^{-\frac{n}2-\frac{m}2+\frac{|\beta|}2} - s^{-\frac{n}2-\frac{m}2+\frac{|\beta|}2} \bigr)
	ds\\
	&\hspace{5mm}\times
	\int_{\mathbb{R}^n}
		(-1)^l (-y)^\beta \mathcal{I}_{hk;n+m} (1,y)
	dy,\\
	J_{j;n+m} (t)
	=&
	-\sum_{k,h=1}^n \int_0^t \int_{\mathbb{R}^n} \biggl(
		R_k R_j G(t-s,x-y) - \sum_{2l+|\beta|=0}^{n+m} \frac{\partial_t^l \nabla^\beta R_k R_j G(t,x)}{l!\beta!} (-s)^l (-y)^\beta \biggr)\\
		&\hspace{5mm}\times
		\mathcal{I}_{hk;n+m+2} (s,y)
	dyds\\
	&-
	\sum_{h=1}^n \int_0^t \int_{\mathbb{R}^n} \biggl(
		G(t-s,x-y) - \sum_{2l+|\beta|=0}^{n+m} \frac{\partial_t^l \nabla^\beta G(t,x)}{l!\beta!} (-s)^l (-y)^\beta \biggr)
		\mathcal{I}_{hj;n+m+2} (s,y)
	dyds
\end{split}
\]
Here $\tilde{V}_{j;n+m} = \tilde{V}_{j;n+m}^T = 0$ for $1 \le m \le 2$ since $\mathcal{I}_{hj;n+m} = 0$.
Thus those two functions are defined only in the case $n \ge 3$.
Those functions are well-defined in $C((0,\infty);L^1 (\mathbb{R}^n) \cap L^\infty (\mathbb{R}^n))$ and satisfy that
\begin{gather}
	\left\| U_{n+m} (t) \right\|_{L^q (\mathbb{R}^n)}
	=
	t^{-\frac{n}2 (1-\frac1q) - \frac{n}2 - \frac{m}2} \left\| U_{n+m} (1) \right\|_{L^q (\mathbb{R}^n)},~
	\left\| U_{n+m}^T (t) \right\|_{L^q (\mathbb{R}^n)}
	=
	t^{-\frac{n}2 (1-\frac1q) - \frac{n}2 - \frac{m}2} \left\| U_{n+m}^T (1) \right\|_{L^q (\mathbb{R}^n)},\notag\\
	\left\| J_{n+m} (t) \right\|_{L^q (\mathbb{R}^n)}
	=
	t^{-\frac{n}2 (1-\frac1q) - \frac{n}2 - \frac{m}2} \left\| J_{n+m} (1) \right\|_{L^q (\mathbb{R}^n)},~
	\left\| V_{n+m}^T (t) \right\|_{L^q (\mathbb{R}^n)}
	=
	t^{-\frac{n}2 (1-\frac1q) - \frac{n}2 - \frac{m}2} \left\| V_{n+m}^T(1) \right\|_{L^q (\mathbb{R}^n)},\notag\\
	\left\| V_{n+m} (t) \right\|_{L^q (\mathbb{R}^n)}
	=
	t^{-\frac{n}2 (1-\frac1q) - \frac{n}2 - \frac{m}2} \left\| V_{n+m} (1) \right\|_{L^q (\mathbb{R}^n)}\label{decay-U}
\end{gather}
for $1 \le q \le \infty$ and $t > 0$, and
\begin{equation}\label{decay-tV}
	\bigl\| \tilde{V}_{n+m} (t) \bigr\|_{L^q (\mathbb{R}^n)} + \bigl\| \tilde{V}_{n+m}^T (t) \bigr\|_{L^q (\mathbb{R}^n)}
	=
	O \bigl( t^{-\frac{n}2 (1-\frac1q) - \frac{n}2 - \frac{m}2} \bigr)
\end{equation}
and
\begin{equation}\label{decay-K}
	\left\| K_{n+m} (t) \right\|_{L^q (\mathbb{R}^n)}
	=
	O \bigl( t^{-\frac{n}2 (1-\frac1q) - \frac{n}2 - \frac{m}2} \log (2+t) \bigr)
\end{equation}
as $t \to +\infty$ for $1 \le q \le \infty$.
We confirm them later (see the last sentences of Section 2).
Therefore large-time behavior of them are straightforward.
Our main assertion is established in the following theorem.
\begin{theorem}\label{thm-main}
	Let $n \ge 2,~ 1 \le m \le n,~ \omega_0 \in L^1 (\mathbb{R}^n) \cap L^\infty (\mathbb{R}^n),~ |x|^{n+m+1} \omega_0 \in L^1 (\mathbb{R}^n)$ and a solution $u$ of \eqref{NS} with $a_j = - \sum_{k=1}^n R_k (-\Delta)^{-1/2} \omega_{0kj}$ satisfy \eqref{assmp}.
	Then
\[
\begin{split}
	&\Bigl\| u (t) - \sum_{k=1}^{n+m} \left( U_{k} + U_k^T \right) (t) - \sum_{k=1}^{m} \left( K_{n+k} + V_{n+k} + V_{n+k}^T + J_{n+k} \right) (t) - \sum_{k=3}^{m} \bigl( \tilde{V}_{n+k} + \tilde{V}_{n+k}^T \bigr) (t) \Bigr\|_{L^q (\mathbb{R}^n)}\\
	&=
	o \bigl( t^{-\frac{n}2 (1-\frac1q) - \frac{n}2 - \frac{m}2} \bigr)
\end{split}
\]
as $t \to +\infty$ holds for $1 \le q \le \infty$.
\end{theorem}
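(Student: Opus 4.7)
The plan is to start from the Duhamel identity \eqref{MSu}, parabolic-Taylor expand each of the three pieces around $(t,x)$, and identify the resulting contributions with the functions built in the statement. For the linear part I rewrite $G(t)*a_j = -\sum_k R_k(-\Delta)^{-1/2}G(t)*\omega_{0kj}$ and Taylor-expand the kernel $R_k(-\Delta)^{-1/2}G(t,x-y)$ in $y$ to order $n+m+1$; the necessary moments exist thanks to $|x|^{n+m+1}\omega_0\in L^1$, and this produces the initial-data part of $U_{j;k}$ for $1\le k\le n+m$ together with a remainder of size $o(t^{-\frac{n}{2}(1-\frac{1}{q})-\frac{n+m+1}{2}})$ via the usual weighted heat-kernel estimate.

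For the bilinear integrals I would first replace $\omega_{hk}u_h(s,y)$ by the self-similar decomposition
\[
	\omega_{hk}u_h(s,y) = \sum_{p=3}^{m+1}\mathcal{I}_{hk;n+p}(s,y) + \mathcal{I}_{hk;n+m+2}(1+s,y) + E_{hk;m}(s,y),
\]
where the error $E_{hk;m}$ has enough combined $(s,y)$-decay to make every moment $\int_0^\infty\!\!\int(-s)^l(-y)^\beta E_{hk;m}\,dy\,ds$ with $2l+|\beta|\le n+m$ absolutely convergent. This decomposition is produced by combining the Escobedo--Zuazua-type expansion of $\omega$ developed earlier in the paper from the vorticity formula \eqref{MSomega} with the low-order expansions of $u$ supplied by Propositions~\ref{C-FM} and~\ref{wasymp-l}; the shift $s\mapsto 1+s$ on the top profile is forced because $\mathcal{I}_{n+m+2}(s)$ is not integrable at $s=0$. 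I would then parabolic-Taylor expand the kernels $G(t-s,x-y)$ and $R_k R_j G(t-s,x-y)$ at $(t,x)$ to combined order $n+m$ and treat the three pieces separately. The $E_{hk;m}$-piece, after splitting $\int_0^t=\int_0^\infty-\int_t^\infty$, reproduces $U_{j;n+m}$ and $U_{j;n+m}^T$ by their very definitions, and the $\int_t^\infty$-tail sits within the target error rate. The $\mathcal{I}_{n+p}(s)$-pieces for $3\le p\le m+1$, together with the discrepancy $\mathcal{I}_{n+p}(1+s)-\mathcal{I}_{n+p}(s)$ carried along from previous inductive steps, assemble into the preceding-order $U_k+U_k^T$ blocks and generate the $\tilde V_{n+k}$, $\tilde V_{n+k}^T$ corrections (which only appear for $k\ge 3$, since $\mathcal{I}_{n+k}=0$ for $k\le 2$). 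Finally, the $\mathcal{I}_{n+m+2}(1+s)$-piece supplies the remaining new terms: the moment identity \eqref{int-I} reduces each Taylor coefficient to $\int_0^t s^l(1+s)^{-l-1-(n+m-2l-|\beta|)/2}\,ds \cdot \int(-y)^\beta\mathcal{I}_{hk;n+m+2}(1,y)\,dy$, which is algebraic and yields $V_{n+m}$ and $V_{n+m}^T$ at $2l+|\beta|<n+m$ (through the factor $2/(n+m-2l-|\beta|)$) and logarithmic, yielding $K_{n+m}$, at the critical index $2l+|\beta|=n+m$; the Taylor remainder of the two kernels acting on $\mathcal{I}_{n+m+2}$ is exactly $J_{n+m}$.

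The whole argument is carried out by induction on $m$: the inductive hypothesis at step $m-1$ furnishes the refined expansion of $u$, and hence of $\omega$, needed to upgrade the approximation of $\omega_{hk}u_h$ by one more order, which is precisely what produces the next $\mathcal{I}_{n+m+2}(1+s)$ term and makes the new moment integrals convergent. The remainder analysis combines Young's inequality for each parabolic Taylor remainder, the self-similar scalings \eqref{decay-I}--\eqref{int-I}, the polynomially weighted bound from Proposition~\ref{wasymp-l} to generate the $|y|^{|\beta|}$ factors (at the cost of the $\log(2+t)$ factor already visible in \eqref{decay-K}), and the standard $\int_0^t=\int_0^{t/2}+\int_{t/2}^t$ time-split. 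The main obstacle, and the reason the list of auxiliary functions is so long, is the simultaneous presence of two singularities that have to be reconciled: the non-integrability of $\mathcal{I}_{n+p}(s)$ at $s=0$, which forces the regularising shift $s\mapsto 1+s$ and therefore carries along the discrepancy $(1+s)^\alpha-s^\alpha$ responsible for the $\tilde V,\tilde V^T$ corrections at subsequent orders, and the logarithmic divergence at the critical index $2l+|\beta|=n+m$, whose presence makes the extraction of the $K_{n+m}$ factor indispensable.
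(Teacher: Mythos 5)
Your proposal is correct and follows essentially the same route as the paper: renormalization of the Duhamel formula \eqref{MSu} by parabolic Taylor expansion, approximation of $\omega_{hk}u_h$ by the self-similar profiles $\mathcal{I}_{hk;n+p}$ (with the shift $s\mapsto 1+s$ on the top profile), induction on $m$, and the same identification of $V,V^T$ with the $\int_t^\infty$ tails, $K$ with the logarithmically divergent critical index $2l+|\beta|=n+m$, $J$ with the kernel Taylor remainder acting on $\mathcal{I}_{n+m+2}$, and $\tilde V,\tilde V^T$ with the discrepancy $(1+s)^\alpha-s^\alpha$. The only difference is organizational: the paper first establishes the expansion with the finite-time coefficients $U_k^S$ (in the proof of Theorem \ref{thm-st}) and then converts to $U_k^T$ through the extra remainders $r_{4,n+m}$ and $r_{1,n+m}^T$, whereas you split $\int_0^t=\int_0^\infty-\int_t^\infty$ directly, which amounts to the same computation.
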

This theorem yields the asymptotic expansion as $t \to +\infty$ of $2n$-th order.
The form of our expansion is complicated.
Now we emphasize that the decay-rate of any terms on the expansion with respect to both the space and the time variables is clear.
The renormalization applied as in \cite{Ishg-Kwkm-Kbys} provides the asymptotic expansion of plain form.
However the large-time behavior of the expansion obtained by this method is covered.
\begin{remark}
Upon the condition for the initial velocity that $a \in L^1 (\mathbb{R}^n) \cap L^\infty (\mathbb{R}^n)$ and $|x|^{n+m} a \in L^1 (\mathbb{R}^n)$, we also derive an asymptotic expansion of $2n$-th order.
\end{remark}
To describe far field asymptotics, we define the following functions for $1 \le m \le n$:
\[
\begin{split}
	U_{j;n+m}^S (t)
	=&
	-\sum_{2l+|\beta| = n+m} \sum_{k,h=1}^n \frac{\partial_t^l \nabla^\beta R_k R_j G(t)}{l!\beta!} \int_0^t \int_{\mathbb{R}^n}
		(-s)^l (-y)^\beta \bigl( \omega_{hk} u_h (s,y) - \sum_{p=3}^{m+1} \mathcal{I}_{hk;n+p} (s,y)\\
		&\hspace{15mm} - \mathcal{I}_{hk;n+m+2} (1+s,y) \bigr)
	dyds
\end{split}
\]
and $U_{n+m}^S = (U_{j;n+m}^S)_{j=1}^n$.
Then
\begin{equation}\label{usa1}
	\bigl\| U_{n+m}^T (t) - U_{n+m}^S (t) \bigr\|_{L^q (\mathbb{R}^n)}
	\le
	C t^{-\frac{n}2 (1-\frac1q) - \frac{n}2 - \frac{m}2} (1+t)^{ - \frac12}
\end{equation}
for $1 \le q \le \infty$ and $t > 0$.
We confirm \eqref{usa1} under the proof of Proposition \ref{prop-wtasymp-omg} in Section 2.
We establish the space-time asymptotics of the velocity with high-order in the following theorem.
\begin{theorem}\label{thm-st}
	Let $n \ge 2,~ 1 \le m \le n,~ \omega_0 \in L^1 (\mathbb{R}^n) \cap L^\infty (\mathbb{R}^n),~ |x|^{n+m+1} \omega_0 \in L^1 (\mathbb{R}^n)$ and a solution $u$ of \eqref{NS} with $a_j = - \sum_{k=1}^n R_k (-\Delta)^{-1/2} \omega_{0kj}$ satisfy \eqref{assmp}.
	Then
	\begin{equation}\label{est-st}
	\begin{split}
		&\Bigl\| |x|^\mu \Bigl( u (t) - \sum_{k=1}^{n+m} \bigl( U_{k} + U_k^S \bigr) (t) - \sum_{k=1}^m \bigl( K_{n+k} + V_{n+k} + J_{n+k} \bigr) (t) - \sum_{k=3}^m \tilde{V}_{n+k} (t) \Bigr) \Bigr\|_{L^q (\mathbb{R}^n)}\\
		&=
		o \bigl( t^{-\frac{n}2 (1-\frac1q) - \frac{n}2 - \frac{m}2 + \frac\mu{2}} \bigr)
	\end{split}
	\end{equation}
	as $t \to +\infty$ holds for $q = 1$ and $0 \le \mu \le n+m-1$, and for $1 < q \le \infty$ and $0 \le \mu \le n+m$.
\end{theorem}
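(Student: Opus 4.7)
The plan is to derive \eqref{est-st} by running the same renormalization scheme used for Theorem \ref{thm-main}, but in weighted $L^q$ norms and with the Duhamel time integral truncated at $t$ (the $S$-version) rather than at $\infty$ (the $T$-version) so that the weighted coefficients converge. Starting from \eqref{MSu}, the weighted expansion of order $n$ supplied by Proposition \ref{wasymp-l} already accounts for the block $\sum_{k=1}^{n}(U_k+U_k^S)$; what remains is to peel off $m$ further layers of asymptotics of orders $n+1,\ldots,n+m$ from the nonlinear integral.

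The first step is to substitute the bilinear decomposition
\[
\omega_{hk} u_h(s) = \sum_{p=3}^{m+1} \mathcal{I}_{hk;n+p}(s) + \mathcal{I}_{hk;n+m+2}(1+s) + \mathcal{R}_{hk}(s)
\]
into the Duhamel term of \eqref{MSu}, where $\mathcal{R}_{hk}$ is a weighted-small remainder (this is the content of the forthcoming approximations \eqref{bilin-asymp} and \eqref{bilin-asymp-wt}). Taylor-expanding the kernel $R_k R_j G(t-s,x-y)$ and its scalar counterpart $G(t-s,x-y)$ around $(t,x)$ up to order $n+m$ then produces three contribution classes: (i) the part acting on $\mathcal{R}_{hk}$ yields precisely $U_{n+m}^S$ plus a Taylor remainder that is $o$ of the target rate thanks to weighted smallness of $\mathcal{R}_{hk}$; (ii) the pairing with $\mathcal{I}_{hk;n+p}(s)$ for $3\le p\le m+1$ regenerates the lower-order $U_k^S$ terms already present in $\sum_{k=1}^{n+m}(U_k+U_k^S)$; (iii) the pairing with $\mathcal{I}_{hk;n+m+2}(1+s)$ is the delicate piece and generates the new corrections $K_{n+m}, V_{n+m}, \tilde V_{n+m}, J_{n+m}$.

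For class (iii), the self-similarity identity \eqref{int-I} reduces the moment integrals to $\int_0^t s^l (1+s)^{-n/2-m/2-1+|\beta|/2}\,ds$ multiplied by $\int (-y)^\beta \mathcal{I}_{hk;n+m+2}(1,y)\,dy$. Splitting $(1+s)^\alpha = s^\alpha + ((1+s)^\alpha - s^\alpha)$ separates the integral into a self-similar part and a correction. The self-similar part integrates explicitly to give $V_{n+m}$, except at the borderline exponent $2l+|\beta|=n+m$ where a logarithm arises and is absorbed into $K_{n+m}$. The correction part has an integrable tail; writing $\int_0^t = \int_0^\infty - \int_t^\infty$ produces $\tilde V_{n+m}$ (vanishing when $\mathcal{I}_{hk;n+m}=0$, i.e.\ $m\le 2$, consistently with the summation range $k=3,\ldots,m$ in \eqref{est-st}). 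The Taylor remainder of the kernel applied to $\mathcal{I}_{hk;n+m+2}$ is precisely $J_{n+m}$. Finally, \eqref{usa1} licenses replacing the $T$-versions by $S$-versions throughout: in weighted norms the naive $T$-version would carry a divergent tail $\int_t^\infty (-s)^l\cdots\,ds$ near the critical exponent, whereas the $S$-version is always well-defined.

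The main obstacle is establishing the weighted control of the bilinear remainder $\mathcal{R}_{hk}$. This requires, upstream, a weighted asymptotic expansion of $\omega$ itself of the Escobedo–Zuazua form \eqref{defOmg} together with a weighted companion to \eqref{assmp} derived through a bootstrap on the vorticity equation \eqref{MSomega} and the Biot–Savart recovery \eqref{BS}, using the hypothesis $|x|^{n+m+1}\omega_0 \in L^1$. These weighted bounds for $\omega$ and $u$ then drive the entire kernel estimation. The threshold restrictions $\mu \le n+m$ for $1<q\le\infty$ and $\mu \le n+m-1$ for $q=1$ emerge precisely when one bounds $\bigl\| |x|^\mu \partial_t^l \nabla^\beta R_k R_j G(t) \bigr\|_{L^q}$: the Riesz transform fails on $L^1$, which forces the weight budget to drop by one at $q=1$, exactly as in \eqref{nz}.
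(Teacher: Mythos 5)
Your proposal follows essentially the same route as the paper: the iterative renormalization of the Duhamel formula with the bilinear approximation by the tensors $\mathcal{I}_{n+p}$, Taylor expansion of the kernels about $(t,x)$, the weighted vorticity estimates of Propositions \ref{prop-decay-wtomg} and \ref{prop-wtasymp-omg} driving the remainder bounds, and the $S$-truncation of the time integrals so that the weighted coefficients stay finite. The only discrepancies are bookkeeping ones (for instance, $\tilde{V}_{n+m}$ actually carries moments of $\mathcal{I}_{n+m}$ and arises from the shift $\mathcal{I}(1+s)-\mathcal{I}(s)$ inside the tail integrals over $(t,\infty)$, not from splitting $\int_0^t=\int_0^\infty-\int_t^\infty$ applied to $\mathcal{I}_{n+m+2}$), which do not affect the soundness of the scheme.
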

\begin{remark}
Large-time behavior of the coefficient of $U_k^S$ is not straightforward but is implied by \eqref{usa0} and \eqref{usa1}.
\end{remark}
\begin{remark}
	Upon the additional condition $|x|^{n+m+2} \omega_0 \in L^1 (\mathbb{R}^n)$, the sharp estimate for \eqref{est-st} is given by $O ( t^{-\frac{n}2 (1-\frac1q) - \frac{n}2 - \frac{m}2 -\frac12 + \frac\mu{2}} L_m (t) \log (2+t) )$ as $t \to +\infty$, where
	\begin{equation}\label{L_m}
		L_m (t) = \left\{
		\begin{array}{lr}
			1,		&	1 \le m \le n-1,\\
			\log (2+t),	&	m=n.
		\end{array}
		\right.
	\end{equation}
\end{remark}
The renormalization together with Theorem \ref{thm-st} gives an asymptotic expansion with $3n$-th order.
By repeating this procedure, we can derive an asymptotic expansion with arbitrary high order.
However, large-time behavior of terms on them should be complicated.\\

\paragraph{\it Notations.}
For a vector and a tensor, we abbreviate them by using a same letter, for example, $a = (a_j)_{j=1}^n,~ b = (b_{ij})_{i,j=1}^n$.
For $x = (x_1,\ldots,x_n)$ and $y = (y_1,\ldots,y_n) \in \mathbb{R}^n$, we denote $x \cdot y = \sum_{j=1}^n x_j y_j,~ |x|^2 = x \cdot x$.
In a newline, a product of scalars is described by $\times$-symbol.
We symbolize that $\partial_t = \partial/\partial t,~ \partial_j = \partial/\partial x_j~ (1 \le j \le n),~ \nabla = (\partial_1,\ldots,\partial_n)$ and $\Delta = \sum_{j=1}^n \partial_j^2$.
The length of a multi-index $\alpha = (\alpha_1,\ldots,\alpha_n) \in \mathbb{Z}_+^n = (\mathbb{N} \cup \{ 0 \})^n$ is given by $|\alpha| = \alpha_1 + \cdots + \alpha_n$.
We abbreviate that $\alpha ! = \prod_{j=1}^n \alpha_j !,~ x^\alpha = \prod_{j=1}^n x_j^{\alpha_j}$ and $\nabla^\alpha = \prod_{j=1}^n \partial_j^{\alpha_j}$.
We define the Fourier transform and its inverse by $\hat{\varphi} (\xi) = \mathcal{F} [\varphi] (\xi) = (2\pi)^{-n/2} \int_{\mathbb{R}^n} \varphi (x) e^{-ix\cdot\xi} dx$ and $\check{\varphi} (x) = \mathcal{F}^{-1} [\varphi] (x) = (2\pi)^{-n/2} \int_{\mathbb{R}^n} \varphi (\xi) e^{ix\cdot\xi} d\xi$, respectively, where $i = \sqrt{-1}$.
For $1 \le q \le \infty$, $L^q (\mathbb{R}^n)$ denotes the Lebesgue space and $\| \cdot \|_{L^q (\mathbb{R}^n)}$ is its norm.
Various constants are simply denoted by $C$.

\section{Preliminaries}
To prove our assertions, some estimates for the vorticity are required.
\begin{proposition}\label{prop-decay-wtomg}
	Let $\omega_0 \in L^1 (\mathbb{R}^n) \cap L^\infty (\mathbb{R}^n),~ |x|^2 \omega_0 \in L^1 (\mathbb{R}^n)$ and a solution $u$ of \eqref{MSu} with $a_j = - \sum_{k=1}^n R_k (-\Delta)^{-1/2} \omega_{0kj}$ satisfy \eqref{assmp}.
	Then a solution $\omega$ of \eqref{MSomega} fulfills
	\begin{equation}\label{decay-omg}
		\| \omega (t) \|_{L^q (\mathbb{R}^n)}
		\le
		C (1+t)^{-\frac{n}2 (1-\frac1q) - 1}
	\end{equation}
	for $1 \le q \le \infty$.
	In addition, let $k \in \mathbb{Z}_+$ and $|x|^k \omega_0 \in L^1 (\mathbb{R}^n)$.
	Then
	\[
		\bigl\| |x|^k \omega (t) \bigr\|_{L^q (\mathbb{R}^n)}
		\le
		C t^{-\frac{n}2 (1-\frac1q)} (1+t)^{-1 + \frac{k}2}.
	\]
	for $1 \le q \le \infty$.
\end{proposition}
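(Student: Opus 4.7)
The plan is to start from the mild formulation \eqref{MSomega} and exploit the parabolic smoothing of $G$, the decay \eqref{assmp} of $u$, and the defining identity $\omega_{ij}(t)=\partial_i u_j(t)-\partial_j u_i(t)$. The key ingredient is a semigroup splitting of the Duhamel formula at the intermediate time $s=t/2$, which allows one to move a spatial derivative from $\omega$ onto a fresh copy of the heat kernel.

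\paragraph{Unweighted estimate \eqref{decay-omg}.} For $t\in[0,1]$, Young's inequality applied to \eqref{MSomega}, combined with $\omega_0\in L^1(\mathbb{R}^n)\cap L^\infty(\mathbb{R}^n)$ and \eqref{assmp}, gives $\|\omega(t)\|_{L^q}\le C$. For $t\ge 1$ I use the semigroup identity at $s=t/2$,
\[
\omega_{ij}(t)=G(t/2)*\omega_{ij}(t/2)+\sum_{h=1}^n\int_{t/2}^t\bigl(\partial_j G(t-s)*(\omega_{hi} u_h)-\partial_i G(t-s)*(\omega_{hj} u_h)\bigr)(s)\,ds,
\]
and in the first term I invoke $\omega_{ij}(t/2)=\partial_i u_j(t/2)-\partial_j u_i(t/2)$ to move the derivative onto the heat kernel, obtaining
\[
\|G(t/2)*\omega_{ij}(t/2)\|_{L^q}\le C\|\nabla G(t/2)\|_{L^1}\|u(t/2)\|_{L^q}\le Ct^{-1/2}(1+t)^{-\frac{n}{2}(1-\frac1q)-\frac12},
\]
which is of the claimed order $(1+t)^{-\frac{n}{2}(1-\frac1q)-1}$. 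The nonlinear integral on $[t/2,t]$ is estimated by H\"older, $\|\omega u\|_{L^p}\le\|\omega\|_{L^{p_1}}\|u\|_{L^{p_2}}$ with a suitable choice of $(p_1,p_2)$, and a bootstrap in time: assuming the target decay for $\omega$ on $[1,T]$ for $T<t$ and using \eqref{assmp}, the fact that $(1+s)\sim t$ on $[t/2,t]$ (together with the integrability of $\|\nabla G(t-s)\|_{L^r}$ near $s=t$) reproduces the sharp rate.

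\paragraph{Weighted estimate.} I induct on $k$; the base case $k=0$ is the unweighted bound just proved. At the step from $k-1$ to $k$, I multiply \eqref{MSomega} by $|x|^k$ and apply the pointwise inequality $|x|^k\le C_k(|x-y|^k+|y|^k)$ inside every convolution. This expresses $\||x|^k\omega(t)\|_{L^q}$ as a finite sum of products of moment norms of $G$ and $\nabla G$ (scaling as $t^{-\frac{n}{2}(1-\frac1r)+\frac{j}{2}}$ and $t^{-\frac{n}{2}(1-\frac1r)+\frac{j}{2}-\frac12}$, respectively) with weighted $L^p$ norms of $\omega_0$, $\omega$, and $u$ of strictly lower order in the weight. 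The lower-order weighted norms of $\omega$ are supplied by the induction hypothesis, while those of $u$ are obtained from the Biot-Savart law \eqref{BS} in terms of weighted norms of $\omega$ of comparable order, closed by one additional bootstrap.

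\paragraph{Main obstacle.} The crux is producing the exponent $-1$ in \eqref{decay-omg} rather than the weaker $-\frac12$ one would obtain from the sole mean-zero identity $\int\omega_0\,dx=0$ and Taylor expansion of $G$; note that the first moment $\int y_k\omega_{0ij}\,dy$ need not vanish under the hypotheses. The extra $t^{-1/2}$ is produced exclusively by moving a spatial derivative onto a fresh copy of the heat kernel at $t/2$ via $\omega=\nabla\times u$ and invoking $\|\nabla G(t/2)\|_{L^1}\le Ct^{-1/2}$. Once this mechanism is set up, both the nonlinear bootstrap and the weighted induction reduce to careful bookkeeping of the $L^p$ indices to ensure that the bootstraps close at the sharp rate; the remaining nontrivial point is to balance the weights in the inductive step so that no logarithmic loss appears in the range $1\le q\le\infty$.
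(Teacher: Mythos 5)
Your unweighted argument for \eqref{decay-omg} is correct and takes a genuinely different route from the paper. The paper gains the extra $t^{-1}$ by subtracting the first-order Taylor polynomial of the kernel and invoking the cancellations $\int\omega_{0ij}\,dy=\int y_k\omega_{0ij}\,dy=0$ and $\sum_h\int(\omega_{hj}u_h)\,dy=0$ (this is what the hypothesis $|x|^2\omega_0\in L^1$ is for), whereas you gain it from the semigroup splitting at $t/2$ together with $\omega_{ij}=\partial_iu_j-\partial_ju_i$, which costs one derivative of $G$ and only uses \eqref{assmp}. Your trick is arguably cleaner for the unweighted bound. One side remark is wrong, though: the first moment of $\omega_0$ \emph{does} vanish here, since integration by parts gives $\int y_k(\partial_ia_j-\partial_ja_i)\,dy=\delta_{kj}\int a_i\,dy-\delta_{ki}\int a_j\,dy=0$ (a summable divergence-free field has zero mean); this cancellation is exactly what the paper's route rests on.

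The weighted estimate is where your proposal has a genuine gap, in two places. First, the splitting $|x|^k\le C_k(|x-y|^k+|y|^k)$ applied directly to \eqref{MSomega} does not reproduce the claimed rate. For the linear term, the piece $\||x|^kG(t)\|_{L^q}\|\omega_0\|_{L^1}\sim t^{-\frac{n}{2}(1-\frac1q)+\frac{k}{2}}$ exceeds the target $t^{-\frac{n}{2}(1-\frac1q)}(1+t)^{-1+\frac{k}{2}}$ by a full power of $t$; for the Duhamel term, the piece $\||x|^k\nabla G(t-s)\|_{L^1}\|(\omega u)(s)\|_{L^1}\sim(t-s)^{\frac{k-1}{2}}(1+s)^{-\frac{n}{2}-\frac32}$ integrates to $Ct^{\frac{k}{2}-\frac12}$, off by $t^{\frac12}$. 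The paper recovers exactly these powers by first splitting the $y$-integration into $|y|\le|x|/2$ and $|y|\ge|x|/2$ and then, in the inner region, Taylor-expanding the kernel about $y=0$ (legitimate thanks to the moment cancellations above), which trades factors of $(t-s)^{-\frac12}$ for factors of $|y|$ that land on $\omega_0$ or on $\omega u$. Your scheme contains no substitute for this step, and without it the induction does not close at the stated rate. Second, your plan to supply weighted norms of $u$ "from the Biot-Savart law in terms of weighted norms of $\omega$ of comparable order" cannot work: by \eqref{spdecay} the velocity decays only like $|x|^{-n-1}$, so $\||x|^ju(t)\|_{L^1}$ already diverges for $j\ge1$ and $\||x|^ju(t)\|_{L^\infty}$ for $j\ge n+1$, while the proposition is asserted for every $k\in\mathbb{Z}_+$ with $|x|^k\omega_0\in L^1$. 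The Biot-Savart kernel decays only like $|x|^{1-n}$ and cannot transfer high moments from $\omega$ to $u$ -- this is precisely the obstruction the paper's introduction is built around, and it is why the paper's weighted induction is arranged so that every weight lands on $\omega$, $\omega_0$, or the Gaussian, and never on $u$. To repair your argument you would need to abandon the weighted use of $\omega=\nabla\times u$ and of Biot-Savart, and reinstate the moment-cancellation/Taylor mechanism in each of the two spatial regions, which is essentially the paper's proof.
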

\begin{proof}
The $L^p$-$L^q$ estimate for \eqref{MSomega} together with \eqref{assmp} gives that $\| \omega (t) \|_{L^q (\mathbb{R}^n)} \le C(1+t)^{-\frac{n}2 (1-\frac1q)}$.
From $\int_{\mathbb{R}^n} \omega_{0ij} dy = 0,~ \int_{\mathbb{R}^n} y_k \omega_{0ij} dy = \int_{\mathbb{R}^n} y_k (\partial_i a_j - \partial_j a_i) dy = \int_{\mathbb{R}^n} (\delta_{kj} a_i - \delta_{ki} a_j) dy = 0$ and $\sum_{h=1}^n \int_{\mathbb{R}^n} (\omega_{hj} u_h) dy = \sum_{h=1}^n \int_{\mathbb{R}^n} (\partial_h u_j - \partial_j u_h) u_h dy = - \int_{\mathbb{R}^n} u_j \nabla \cdot u dy = 0$, \eqref{MSomega} is represented by
\begin{equation}\label{omg-bs}
\begin{split}
	\omega_{ij} (t)
	=&
	\int_{\mathbb{R}^n} \biggl( G(t,x-y) - \sum_{|\alpha| \le 1} \nabla^\alpha G(t,x) (-y)^\alpha \biggr) \omega_{0ij} (y) dy\\
	&+
	\sum_{h=1}^n \int_0^t \int_{\mathbb{R}^n}
		\left( \partial_j G(t-s,x-y) - \partial_j G(t-s,x) \right) \left( \omega_{hi} u_h \right) (s,y)
	dyds\\
	&-
	\sum_{h=1}^n \int_0^t \int_{\mathbb{R}^n}
		\left( \partial_i G(t-s,x-y) - \partial_i G(t-s,x) \right) \left( \omega_{hj} u_h \right) (s,y)
	dyds.
\end{split}
\end{equation}
From the mean value theorem, the first and the second terms are converted to
\[
	\int_{\mathbb{R}^n} \biggl( G(t,x-y) - \sum_{|\alpha| \le 1} \nabla^\alpha G(t,x) (-y)^\alpha \biggr) \omega_{0ij} (y) dy
	=
	\sum_{|\alpha| = 2} \int_{\mathbb{R}^n} \int_0^1 \frac{\nabla^\alpha G(t,x-\lambda y)}{\alpha!} \lambda (-y)^\alpha \omega_{0ij} (y) d\lambda dy
\]
and
\[
\begin{split}
	&\int_0^t \int_{\mathbb{R}^n}
		\left( \partial_j G(t-s,x-y) - \partial_j G(t-s,x) \right) \left( \omega_{hi} u_h \right) (s,y)
	dyds\\
	=&
	\int_0^{t/2} \int_{\mathbb{R}^n} \int_0^1
		\left( -y \cdot \nabla \right) \partial_j G(t-s,x-\lambda y) \left( \omega_{hi} u_h \right) (s,y)
	d\lambda dyds\\
	&+
	\int_{t/2}^t \int_{\mathbb{R}^n}
		\left( \partial_j G(t-s,x-y) - \partial_j G(t-s,x) \right) \left( \omega_{hi} u_h \right) (s,y)
	dyds,
\end{split}
\]
respectively.
The third term also is converted to the similar form.
Hence, by the Hausdorf-Young inequality and the decay of the Gauss kernel,
\begin{equation}\label{mc}
\begin{split}
	&\bigl\| \omega_{ij} (t) \bigr\|_{L^q (\mathbb{R}^n)}
	\le
	C t^{-\frac{n}2 (1-\frac1q) - 1} \bigl\| |x|^2 \omega_{0ij} \bigr\|_{L^1 (\mathbb{R}^n)}\\
	&+
	C \sum_{h=1}^n \int_0^{t/2}
		(t-s)^{-\frac{n}2 (1-\frac1q) - 1} \left( \bigl\| |x| \omega_{hi} (s) \bigr\|_{L^1 (\mathbb{R}^n)} + \bigl\| |x| \omega_{hj} (s) \bigr\|_{L^1 (\mathbb{R}^n)} \right) \bigl\| u_h (s) \bigr\|_{L^\infty (\mathbb{R}^n)}
	ds\\
	&+
	C \sum_{h=1}^n \int_{t/2}^t
		(t-s)^{-\frac12} \left( \bigl\| \omega_{hi} (s) \bigr\|_{L^q (\mathbb{R}^n)} + \bigl\| \omega_{hj} (s) \bigr\|_{L^q (\mathbb{R}^n)} \right) \bigl\| u_h (s) \bigr\|_{L^\infty (\mathbb{R}^n)}
	ds.
\end{split}
\end{equation}
For $k \ge 1$, we see from \eqref{omg-bs} that
\begin{equation}\label{24}
\begin{split}
	&|x|^k \omega_{ij} (t)
	=
	\int_{|y| \ge |x|/2} \biggl( G(t,x-y) - \sum_{|\alpha| \le 1} \nabla^\alpha G(t,x) (-y)^\alpha \biggr) |x|^k \omega_{0ij} (y) dy\\
	&+
	\sum_{|\alpha|=2} \int_{|y| \le |x|/2} \int_0^1 \frac{|x|^k \nabla^\alpha G(t,x-\lambda y)}{\alpha!} \lambda (-y)^\alpha \omega_{0ij} (y) d\lambda dy
\end{split}
\end{equation}
\[
\begin{split}
	&+
	\sum_{h=1}^n \int_0^t \int_{|y| \ge |x|/2}
		\left( \partial_j G(t-s,x-y) - \partial_j G(t-s,x) \right) |x|^k \left( \omega_{hi} u_h \right) (s,y)
	dyds\\
	&+
	\sum_{|\beta|=1} \sum_{h=1}^n \int_0^t \int_{|y| \le |x|/2} \int_0^1
		|x|^k \nabla^\beta \partial_j G(t-s,x-\lambda y) (-y)^\beta  \left( \omega_{hi} u_h \right) (s,y)
	d\lambda dyds\\
	&-
	\sum_{h=1}^n \int_0^t \int_{|y| \ge |x|/2}
		\left( \partial_i G(t-s,x-y) - \partial_i G(t-s,x) \right) |x|^k \left( \omega_{hj} u_h \right) (s,y)
	dyds\\
	&-
	\sum_{|\beta|=1} \sum_{h=1}^n \int_0^t \int_{|y| \le |x|/2} \int_0^1
		|x|^k \nabla^\beta \partial_i G(t-s,x-\lambda y) (-y)^\beta  \left( \omega_{hj} u_h \right) (s,y)
	d\lambda dyds.
\end{split}
\]
Applying the mean value theorem to the first term with $k = 1$, we have that
\[
\begin{split}
	&\biggl\| \int_{|y| \ge |x|/2} \biggl( G(t,x-y) - \sum_{|\alpha| \le 1} \nabla^\alpha G(t,x) (-y)^\alpha \biggr) |x| \omega_{0ij} (y) dy \biggr\|_{L^q (\mathbb{R}^n)}\\
	\le&
	C \| \nabla G(t) \|_{L^q (\mathbb{R}^n)} \bigl\| |x|^2 \omega_{0ij} \bigr\|_{L^1 (\mathbb{R}^n)}
	\le
	C t^{-\frac{n}2 (1-\frac1q)-\frac12} \bigl\| |x|^2 \omega_{0ij} \bigr\|_{L^1 (\mathbb{R}^n)}.
\end{split}
\]
For $k \ge 1$, this term fulfills that
\[
\begin{split}
	&\biggl\| \int_{|y| \ge |x|/2} \biggl( G(t,x-y) - \sum_{|\alpha| \le 1} \nabla^\alpha G(t,x) (-y)^\alpha \biggr) |x|^k \omega_{0ij} (y) dy \biggr\|_{L^q (\mathbb{R}^n)}\\
	\le&
	C \left( \| G(t) \|_{L^q (\mathbb{R}^n)} + \| |x| \nabla G(t) \|_{L^q (\mathbb{R}^n)} \right) \bigl\| |x|^k \omega_{0ij} \bigr\|_{L^1 (\mathbb{R}^n)}
	\le
	C t^{-\frac{n}2 (1-\frac1q)} \bigl\| |x|^k \omega_{0ij} \bigr\|_{L^1 (\mathbb{R}^n)}.
\end{split}
\]
The second term of \eqref{24} satisfies that
\[
\begin{split}
	\biggl\| \int_{|y| \le |x|/2} \int_0^1 \frac{|x|^k \nabla^\alpha G(t,x-\lambda y)}{\alpha!} \lambda (-y)^\alpha \omega_{0ij} (y) d\lambda dy \biggr\|_{L^q (\mathbb{R}^n)}
	\le&
	C t^{-\frac{n}2 (1-\frac1q)-1+\frac{k}2} \bigl\| |x|^2 \omega_{0ij} \bigr\|_{L^1 (\mathbb{R}^n)}.
\end{split}
\]
We remark that, when $k = 1$, this norm is estimated by $C t^{-\frac{n}2 (1-\frac1q)} (1+t)^{-\frac12}$.
By using \eqref{assmp}, we see for the third and the fourth terms of \eqref{24} that
\[
\begin{split}
	&\biggl\|
		\int_0^t \int_{|y| \ge |x|/2}
			\left( \partial_j G(t-s,x-y) - \partial_j G(t-s,x) \right) |x|^k \left( \omega_{hi} u_h \right) (s,y)
		dyds
	\biggr\|_{L^q (\mathbb{R}^n)}\\
	&\le
	C \int_0^{t/2}
		(t-s)^{-\frac{n}2 (1-\frac1q)-\frac12}
		\bigl\| |x|^k \left( \omega_{hi} u_h \right) (s) \bigr\|_{L^1 (\mathbb{R}^n)}
	ds
	+
	C \int_{t/2}^t
		(t-s)^{-\frac12}
		\bigl\| |x|^k \left( \omega_{hi} u_h \right) (s) \bigr\|_{L^q (\mathbb{R}^n)}
	ds\\
	&\le
	C \int_0^{t/2}
		(t-s)^{-\frac{n}2 (1-\frac1q)-\frac12}
		(1+s)^{-\frac{n}2 - \frac12}  \bigl\| |x|^k \omega_{hi} (s) \bigr\|_{L^1 (\mathbb{R}^n)}
	ds\\
	&+
	C \int_{t/2}^t
		(t-s)^{-\frac12}
		(1+s)^{-\frac{n}2 - \frac12} \bigl\| |x|^k \omega_{hi} (s) \bigr\|_{L^q (\mathbb{R}^n)}
	ds
\end{split}
\]
and
\[
\begin{split}
	&\biggl\|
		\int_0^t \int_{|y| \le |x|/2} \int_0^1
			|x|^k \nabla^\beta \partial_j G(t-s,x-\lambda y) (-y)^\beta  \left( \omega_{hi} u_h \right) (s,y)
		d\lambda dyds
	\biggr\|_{L^q (\mathbb{R}^n)}\\
	\le&
	C \int_0^{t/2}
		(t-s)^{-\frac{n}2 (1-\frac1q) - 1 + \frac{k}2}
		\bigl\| |x| (\omega_{hi} u_h) (s) \bigr\|_{L^1 (\mathbb{R}^n)}
	ds
	+
	C \int_{t/2}^t
		(t-s)^{-1 + \frac{k}2}
		\bigl\| |x| (\omega_{hi} u_h) (s) \bigr\|_{L^q (\mathbb{R}^n)}
	ds\\
	\le&
	C \int_0^{t/2}
		(t-s)^{-\frac{n}2 (1-\frac1q) - 1 + \frac{k}2}
		(1+s)^{-\frac{n}2 - \frac12} \bigl\| |x| \omega_{hi} (s) \bigr\|_{L^1 (\mathbb{R}^n)}
	ds\\
	&+
	C \int_{t/2}^t
		(t-s)^{-1 + \frac{k}2}
		(1+s)^{-\frac{n}2-\frac12} \bigl\| |x| \omega_{hi} (s) \bigr\|_{L^q (\mathbb{R}^n)}
	ds,
\end{split}
\]
respectively.
We treat the fifth and the last terms of \eqref{24} by the similar argument, then we obtain that
\begin{equation}\label{wtomg-bs}
\begin{split}
	&\bigl\| |x|^k \omega_{ij} (t) \bigr\|_{L^q (\mathbb{R}^n)}\\
	&\le
	C t^{-\frac{n}2 (1-\frac1q)} (1+t)^{-1+\frac{k}2} \left( \bigl\| |x|^2 \omega_{0ij} \bigr\|_{L^1 (\mathbb{R}^n)} + \bigl\| |x|^k \omega_{0ij} \bigr\|_{L^1 (\mathbb{R}^n)} \right)
	+
	C t^{-\frac{n}2 (1-\frac1q) -1+\frac{k}2} \left\| |x|^2 \omega_{0ij} \right\|_{L^1 (\mathbb{R}^n)}\\
	&+
	C \sum_{h=1}^n \int_0^{t/2}
		(t-s)^{-\frac{n}2(1-\frac1q)-\frac12} (1+s)^{-\frac{n}2-\frac12}
		\Bigl( \bigl\| |x|^k \omega_{hi} (s) \bigr\|_{L^1 (\mathbb{R}^n)} + \bigl\| |x|^k \omega_{hj} (s) \bigr\|_{L^1 (\mathbb{R}^n)} \Bigr)
	ds\\
	&+
	C \sum_{h=1}^n \int_{t/2}^t
		(t-s)^{-\frac12} (1+s)^{-\frac{n}2-\frac12}
		\Bigl( \bigl\| |x|^k \omega_{hi} (s) \bigr\|_{L^q (\mathbb{R}^n)} + \bigl\| |x|^k \omega_{hj} (s) \bigr\|_{L^q (\mathbb{R}^n)} \Bigr)
	ds\\
	&+
	C \sum_{h=1}^n \int_0^{t/2}
		(t-s)^{-\frac{n}2(1-\frac1q)-1+\frac{k}2} (1+s)^{-\frac{n}2-\frac12}
		\Bigl( \left\| |x| \omega_{hi} (s) \right\|_{L^1 (\mathbb{R}^n)} +\left\| |x| \omega_{hj} (s) \right\|_{L^1 (\mathbb{R}^n)} \Bigr)
	ds\\
	&+
	C \sum_{h=1}^n \int_{t/2}^t
		(t-s)^{-1+\frac{k}2} (1+s)^{-\frac{n}2-\frac12}
		\Bigl( \left\| |x| \omega_{hi} (s) \right\|_{L^q (\mathbb{R}^n)} +\left\| |x| \omega_{hj} (s) \right\|_{L^q (\mathbb{R}^n)} \Bigr)
	ds.
\end{split}
\end{equation}
%
When $k = q = 1$, since the singularity of the second term at $t = 0$ is removable, the Granwall estimate says that $|x|\omega(t) \in L^1 (\mathbb{R}^n)$ for $t > 0$, and
\[
\begin{split}
	&\sum_{i,j=1}^n \left\| |x| \omega_{ij} (t) \right\|_{L^1 (\mathbb{R}^n)}
	\le
	C (1+t)^{-\frac12} \sum_{i,j=1}^n \left( \left\| |x| \omega_{0ij} \right\|_{L^1 (\mathbb{R}^n)} + \left\| |x|^2 \omega_{0ij} \right\|_{L^1 (\mathbb{R}^n)} \right)\\
	&+
	C \sum_{h,i,j=1}^n \sup_{0 < \sigma < t} \left( \left\| |x| \omega_{hi} (\sigma) \right\|_{L^1 (\mathbb{R}^n)} +\left\| |x| \omega_{hj} (\sigma) \right\|_{L^1 (\mathbb{R}^n)} \right)
	\int_0^t (t-s)^{-\frac12} (1+s)^{-\frac{n}2-\frac12} ds.
\end{split}
\]
Thus we conclude that $\| |x| \omega (t) \|_{L^1 (\mathbb{R}^n)} \le C (1+t)^{-\frac12}$ and confirm \eqref{decay-omg} from \eqref{mc}.
We use this estimate into \eqref{wtomg-bs} with $k = 1$ and $1 \le q \le \infty$, then
\[
\begin{split}
	\sum_{i,j=1}^n t^{\frac{n}2 (1-\frac1q)} \bigl\| |x| \omega_{ij} (t) \bigr\|_{L^q (\mathbb{R}^n)}
	\le&
	C (1+t)^{-\frac12}
	+
	C t^{\frac{n}2 (1-\frac1q)} \int_0^{t/2} (t-s)^{-\frac{n}2 (1-\frac1q)-\frac12} (1+s)^{-\frac{n}2 - 1} ds\\
	&+
	C A_{q,1} (t)
	\int_{t/2}^t (t-s)^{-\frac12} (1+s)^{-\frac{n}2 - \frac12}ds,
\end{split}
\]
where
\[
	A_{q,k} (t) = \sum_{h,i=1}^n \sup_{0 < \sigma < t} \left( \sigma^{\frac{n}2 (1-\frac1q)} \| |x|^k \omega_{hi} (\sigma) \bigr\|_{L^q (\mathbb{R}^n)} \right).
\]
Hence $\| |x| \omega (t) \|_{L^q (\mathbb{R}^n)} \le C t^{-\frac{n}2 (1-\frac1q)} (1+t)^{-\frac12}$.
Similarly \eqref{wtomg-bs} with this estimate leads that $\| |x|^k \omega (t) \|_{L^1 (\mathbb{R}^n)} \le C (1+t)^{-1+\frac{k}2}$ for $k \ge 2$.
Applying those estimates into \eqref{wtomg-bs} with $k \ge 2$ and $1 \le q \le \infty$, we see that
\[
\begin{split}
	&\sum_{i,j=1}^n t^{\frac{n}2 (1-\frac1q)} \bigl\| |x|^k \omega_{ij} (t) \bigr\|_{L^q (\mathbb{R}^n)}
	\le
	C (1+t)^{-1+\frac{k}2}
	+
	C t^{\frac{n}2 (1-\frac1q)} \int_0^{t/2} (t-s)^{-\frac{n}2 (1-\frac1q) - \frac12} (1+s)^{-\frac{n}2-\frac32 + \frac{k}2} ds\\
	&+
	C t^{\frac{n}2 (1-\frac1q)} \int_0^{t/2} (t-s)^{-\frac{n}2 (1-\frac1q) - 1 + \frac{k}2} (1+s)^{-\frac{n}2 - 1} ds
	+
	C t^{\frac{n}2 (1-\frac1q)} \int_{t/2}^t (t-s)^{-1+\frac{k}2} s^{-\frac{n}2 (1-\frac1q)} (1+s)^{-\frac{n}2 - 1} ds\\
	&+
	C A_{q,k} (t) \int_{t/2}^t (t-s)^{-\frac12} (1+s)^{-\frac{n}2 - \frac12} ds
\end{split}
\]
and then $\| |x|^k \omega (t) \|_{L^q (\mathbb{R}^n)} \le Ct^{-\frac{n}2 (1-\frac1q)} (1+t)^{-1+\frac{k}2}$.
\end{proof}
This proposition and the decay property \eqref{assmp} guarantee that $\Omega_{m+1},~ U_m,~ U_m^T,~ U_m^S$ and $\mathcal{I}_{n+m}$ for $1 \le m \le n$ are well-defined.
They also lead \eqref{usa0}.
Moreover, we see that $K_{n+m},~ V_{n+m},~ V_{n+m}^T,~ \tilde{V}_{n+m},~ \tilde{V}_{n+m}^T$ and $J_{n+m}$ employed in our main results also are well-defined.
However $J_{n+m}$ needs a special treatment (see the last sentences in this section).
We confirm the Escobedo-Zuazua type estimate for $\omega$.
\begin{proposition}\label{prop-asymp-omg}
Let $1 \le q \le \infty,~ 1 \le m \le n,~ \omega_0 \in L^1 (\mathbb{R}^n) \cap L^q (\mathbb{R}^n)$ and $|x|^{m+2} \omega_0 \in L^1 (\mathbb{R}^n)$.
Then
\[
	\biggl\| \omega (t) - \sum_{p=2}^{m+1} \Omega_{p} (t) \biggr\|_{L^q (\mathbb{R}^n)}
	\le
	C t^{-\frac{n}2 (1-\frac1q) - \frac{m}2 - \frac12} (1+t)^{-\frac12} L_m (t),
\]
where $\Omega_p$ and $L_m$ are defined by \eqref{defOmg} and \eqref{L_m}, respectively.
\end{proposition}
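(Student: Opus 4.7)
The plan is to plug Taylor expansions of the heat kernel into the integral equation \eqref{MSomega} and identify the polynomial part with $\sum_{p=2}^{m+1}\Omega_p(t)$. In the linear term I would write
\[
G(t,x-y) = \sum_{|\alpha|\le m+1}\frac{\nabla^\alpha G(t,x)}{\alpha!}(-y)^\alpha + R^{(1)}_{m+2}(t,x,y),
\]
and in the two bilinear terms
\[
\partial_j G(t-s,x-y) = \sum_{2l+|\beta|\le m}\frac{\partial_t^l\nabla^\beta\partial_j G(t,x)}{l!\,\beta!}(-s)^l(-y)^\beta + R^{(2)}_{m+1}(t,s,x,y)
\]
(and analogously with $\partial_i$). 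The moment identities $\int\omega_{0ij}\,dy = \int y_k\omega_{0ij}\,dy=0$ and $\sum_h\int(\omega_{hj}u_h)(s,y)\,dy=0$, already established in the proof of Proposition \ref{prop-decay-wtomg}, make the Taylor monomials of order $|\alpha|\le 1$ and $2l+|\beta|=0$ vanish. Extending the $s$-integration of the nonlinear Taylor part from $[0,t]$ to $[0,\infty)$ then converts the collected polynomial part into exactly $\sum_{p=2}^{m+1}\Omega_p(t)$ as defined in \eqref{defOmg}, at the cost of a tail correction $\int_t^\infty$.

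It remains to bound four remainder pieces: the initial-data remainder $\int R^{(1)}_{m+2}(t,x,y)\,\omega_{0ij}(y)\,dy$; the nonlinear Taylor remainder on $[0,t/2]$; the near-diagonal piece on $[t/2,t]$, where Taylor saves nothing and one integrates against the weak singularity $(t-s)^{-1/2}$ of $\nabla G(t-s)$ directly; and the tail correction $\int_t^\infty$ of the Taylor polynomial. The first is bounded by $Ct^{-\frac{n}{2}(1-\frac1q)-\frac{m+2}{2}}\||x|^{m+2}\omega_{0ij}\|_{L^1}$ via a standard Gauss-kernel estimate. The nonlinear pieces are handled with Proposition \ref{prop-decay-wtomg} (yielding $\||x|^k\omega(s)\|_{L^1}\le C(1+s)^{-1+k/2}$) combined with $\|u(s)\|_{L^\infty}\le C(1+s)^{-n/2-1/2}$ from \eqref{assmp}, producing integrals of the general form
\[
\int_0^{t/2}(t-s)^{-\frac{n}{2}(1-\frac1q)-\frac{m+2}{2}}(1+s)^{\frac{m-n}{2}-1}\,ds
\]
together with simpler near-diagonal and tail integrals of the same or smaller order.

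The main obstacle is the logarithmic factor $L_m(t)$ at $m=n$, which arises precisely from the critical integral above. Replacing $t-s$ by $t$ on $[0,t/2]$ gives
\[
t^{-\frac{n}{2}(1-\frac1q)-\frac{m+2}{2}}\int_1^{t/2}s^{\frac{m-n-2}{2}}\,ds,
\]
uniformly bounded when $m<n$ but exactly of size $\log(t/2)$ when $m=n$, matching \eqref{L_m}. The factor $(1+t)^{-\frac12}$ in the claimed bound is a bookkeeping artifact that interpolates the small-$t$ singularity $t^{-\frac{n}{2}(1-\frac1q)-\frac{m+1}{2}}$ (coming from the worst near-diagonal estimates) with the large-$t$ decay $t^{-\frac{n}{2}(1-\frac1q)-\frac{m+2}{2}}$. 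Aside from matching this critical case and reconciling small/large times, all estimates are routine extensions of the arguments used in the proof of Proposition \ref{prop-decay-wtomg}.
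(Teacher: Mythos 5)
Your proposal follows the paper's own proof essentially step for step: the same Taylor expansion of the kernels in \eqref{MSomega} with the moment identities killing the terms of order $|\alpha|\le 1$ and $2l+|\beta|=0$, the same $[0,t/2]$ versus $[t/2,t]$ splitting with Proposition \ref{prop-decay-wtomg} and \eqref{assmp} supplying the weighted bounds on $\omega_{hi}u_h$, and the same critical integral $\int_0^{t/2}(1+s)^{-\frac{n}2-1+\frac{m}2}\,ds$ producing the factor $L_m(t)$ exactly when $m=n$. The only difference is cosmetic: you explicitly record and bound the $\int_t^\infty$ tail created by extending the coefficient integrals to $[0,\infty)$, which the paper's displayed identity \eqref{asymp-omg-base} leaves implicit.
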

\begin{proof}
This proposition is shown by the same procedure as in \cite{EZ}.
Reader may skip this sentence.
Employing similar argument as in the proof of Proposition \ref{prop-decay-wtomg}, we see that
\begin{equation}\label{asymp-omg-base}
\begin{split}
	&\omega_{ij} (t)
	-
	\sum_{p=2}^{m+1} \Omega_{ij;p} (t)\\
	=&
	\int_{\mathbb{R}^n} \biggl( G(t,x-y) - \sum_{|\alpha| = 0}^{m+1} \frac{\nabla^\alpha G(t)}{\alpha!} (-y)^\alpha \biggr) \omega_{0ij} (y) dy\\
	&+
	\sum_{h=1}^n \int_0^t \int_{\mathbb{R}^n}
		\biggl( \partial_j G (t-s,x-y) - \sum_{2l + |\beta|=0}^m \frac{\partial_t^l \nabla^\beta \partial_j G(t,x)}{l!\beta!} (-s)^l (-y)^\beta \biggr)
		\left( \omega_{hi} u_h \right) (s,y)
	dyds\\
	&-
	\sum_{h=1}^n \int_0^t \int_{\mathbb{R}^n}
		\biggl( \partial_i G (t-s,x-y) - \sum_{2l + |\beta|=0}^m \frac{\partial_t^l \nabla^\beta \partial_i G(t,x)}{l!\beta!} (-s)^l (-y)^\beta \biggr)
		\left( \omega_{hj} u_h \right) (s,y)
	dyds.
\end{split}
\end{equation}
The estimate for the first term is straightforward.
For $N = \max \{ l \in \mathbb{Z}_+~ |~ 2l \le m \} + 1$, the second term is converted to
\[
\begin{split}
	&\int_0^t \int_{\mathbb{R}^n}
		\biggl( \partial_j G (t-s,x-y) - \sum_{2l + |\beta|=0}^m \frac{\partial_t^l \nabla^\beta \partial_j G(t,x)}{l!\beta!} (-s)^l (-y)^\beta \biggr)
		\left( \omega_{hi} u_h \right) (s,y)
	dyds\\
	=&
	\int_0^{t/2} \int_{\mathbb{R}^n}
		\biggl( \partial_j G(t-s,x-y) - \sum_{2l=0}^m \frac{\partial_t^l \partial_j G(t,x-y)}{l!} (-s)^l \biggr) \left( \omega_{hi} u_h \right) (s,y)
	dyds\\
	&+
	\sum_{2l=0}^m \int_0^{t/2} \int_{\mathbb{R}^n}
		\biggl( \frac{\partial_t^l \partial_j G(t,x-y)}{l!} - \sum_{|\beta|=0}^{m-2l} \frac{\partial_t^l \nabla^\beta \partial_j G(t,x)}{l!\beta!} (-y)^\beta \biggr)
		(-s)^l \left( \omega_{hi} u_h \right) (s,y)
	dyds\\
	&+
	\int_{t/2}^t \int_{\mathbb{R}^n}
		\biggl( \partial_j G (t-s,x-y) - \sum_{2l + |\beta|=1}^m \frac{\partial_t^l \nabla^\beta \partial_j G(t,x)}{l!\beta!} (-s)^l (-y)^\beta \biggr)
		\left( \omega_{hi} u_h \right) (s,y)
	dyds\\
	=&
	\int_0^{t/2} \int_{\mathbb{R}^n} \int_0^1
		\frac{\partial_t^N \partial_j G(t- \lambda s,x-y)}{N!} \lambda^{N-1}
		(-s)^N \left( \omega_{hi} u_h \right) (s,y)
	d\lambda dyds\\
	&+
	\sum_{2l=0}^m \sum_{|\beta|=m+1-2l} \int_0^{t/2} \int_{\mathbb{R}^n} \int_0^1
		\frac{\partial_t^l \nabla^\beta \partial_j G(t,x-\lambda y)}{l!\beta!} \lambda^{m-2l} (-s)^l (-y)^\beta \left( \omega_{hi} u_h \right) (s,y)
	d\lambda dyds\\
	&+
	\int_{t/2}^t \int_{\mathbb{R}^n}
		\biggl( \partial_j G (t-s,x-y) - \sum_{2l + |\beta|=1}^m \frac{\partial_t^l \nabla^\beta \partial_j G(t,x)}{l!\beta!} (-s)^l (-y)^\beta \biggr)
		\left( \omega_{hi} u_h \right) (s,y)
	dyds.
\end{split}
\]
Hence, by the Hausdorf-Young inequality, \eqref{assmp} and \eqref{decay-omg}, we have that
\[
\begin{split}
	&\biggl\|
		\int_0^t \int_{\mathbb{R}^n}
			\biggl( \partial_j G (t-s,x-y) - \sum_{2l + |\beta|=0}^m \frac{\partial_t^l \nabla^\beta \partial_j G(t,x)}{l!\beta!} (-s)^l (-y)^\beta \biggr)
			\left( \omega_{hi} u_h \right) (s,y)
		dyds
	\biggr\|_{L^q (\mathbb{R}^n)}\\
	\le&
	C t^{-\frac{n}2 (1-\frac1q) - N - \frac12} \int_0^{t/2}
		s^N \left\| (\omega_{hi} u_h) (s) \right\|_{L^1 (\mathbb{R}^n)}
	ds\\
	&+
	C t^{-\frac{n}2 (1-\frac1q) - \frac{m}2 - 1} \sum_{2l=0}^m \int_0^{t/2}
		s^l \bigl\| |y|^{m+1-2l} \left( \omega_{hi} u_h \right) (s) \bigr\|_{L^1 (\mathbb{R}^n)}
	ds
	+
	C \int_{t/2}^t
		(t-s)^{-\frac12} \left\| \left( \omega_{hi} u_h \right) (s) \right\|_{L^q (\mathbb{R}^n)}
	ds
\end{split}
\]
\[
\begin{split}
	&+
	C \sum_{2l+|\beta|=1}^m t^{-\frac{n}2 (1-\frac1q) - l - \frac{|\beta|}2 - \frac12} \int_{t/2}^t
		s^l \bigl\| (-y)^\beta \left( \omega_{hi} u_h \right) (s) \bigr\|_{L^1 (\mathbb{R}^n)}
	ds\\
	\le&
	C t^{-\frac{n}2 (1-\frac1q) - N - \frac12} \int_0^{t/2}
		(1+s)^{-\frac{n}2 - \frac32 + N}
	ds
	+
	C t^{-\frac{n}2 (1-\frac1q) - \frac{m}2 - 1} \int_0^{t/2}
		(1+s)^{-\frac{n}2-1+\frac{m}2}
	ds\\
	&+
	C \int_{t/2}^t
		(t-s)^{-\frac12} s^{-\frac{n}2 (1-\frac1q) -\frac{m}2 - 1}
	ds
	+
	C \sum_{2l+|\beta|=1}^m t^{-\frac{n}2 (1-\frac1q) - l - \frac{|\beta|}2 - \frac12} \int_{t/2}^t
		s^{-\frac{n}2 - \frac32 + l + \frac{|\beta|}2}
	ds\\
	\le&
	C t^{-\frac{n}2 (1-\frac1q) - \frac{m}2 - 1} L_m (t).
\end{split}
\]
Similar treatment provides the estimate for the last term on \eqref{asymp-omg-base}.
\end{proof}
Now we see that
\begin{equation}\label{Izr}
	\int_{\mathbb{R}^n} \mathcal{I}_{n+m} (t,x) dx = 0
\end{equation}
for $3 \le m \le n+2$ and $t > 0$.
Indeed, for $m = 3$, if we assume $\int_{\mathbb{R}^n} \mathcal{I}_{hj;n+3} (t,x) dx \neq 0$ for some $1 \le h,j \le n$, then
\[
	\int_{\mathbb{R}^n} \left( \omega_{hj} u_h - \mathcal{I}_{hj;n+3} \right) (t,x) dx
	=
	- \int_{\mathbb{R}^n} \mathcal{I}_{hj;n+3} (t,x) dx
	=
	- t^{-\frac{n}2 - \frac32} \int_{\mathbb{R}^n} \mathcal{I}_{hj;n+3} (1,x) dx.
\]
On the other hand \eqref{assmp} and Propositions \ref{C-FM}, \ref{prop-decay-wtomg} and \ref{prop-asymp-omg} say that
\[
	\biggl| \int_{\mathbb{R}^n} \left( \omega_{hj} u_h - \mathcal{I}_{hj;n+3} \right) (t,x) dx \biggr|
	\le
	\bigl\| ( \omega_{hj} u_h - \mathcal{I}_{hj;n+3} ) (t) \bigr\|_{L^1 (\mathbb{R}^n)}
	=
	o \bigl( t^{-\frac{n}2 - \frac32} \bigr)
\]
as $t \to +\infty$.
They are contradictory.
Inductively, if $\int_{\mathbb{R}^n} \mathcal{I}_{hj;n+m} (t,x) dx \neq 0$, then
\[
	\int_{\mathbb{R}^n} \biggl(
		\omega_{hk} u_h - \sum_{p=3}^{m} \mathcal{I}_{hj;n+p}
	\bigg) (t,x) dx
	=
	- \int_{\mathbb{R}^n} \mathcal{I}_{hj;n+m} (t,x) dx
	=
	- t^{-\frac{n}2 - \frac{m}2} \int_{\mathbb{R}^n} \mathcal{I}_{hj;n+m} (1,x) dx.
\]
However
\[
	\biggl| \int_{\mathbb{R}^n} \biggl(
		\omega_{hk} u_h - \sum_{p=3}^{m} \mathcal{I}_{hj;n+p}
	\bigg) (t,x) dx \biggr|
	\le
	\biggl\| \biggl(
		\omega_{hk} u_h - \sum_{p=3}^{m} \mathcal{I}_{hj;n+p}
	\bigg) (t) \biggr\|_{L^1 (\mathbb{R}^n)}
	=
	o \bigl( t^{-\frac{n}2 - \frac{m}2} \bigr)
\]
as $t \to +\infty$.
Therefore $\int_{\mathbb{R}^n} \mathcal{I}_{hj;n+m} (t,x) dx = 0$ for any $1 \le h,j \le n$.
We prepare the following weighted estimate.
\begin{proposition}\label{prop-wtasymp-omg}
Let $1 \le m \le n,~ (1+|x|)^{n+m+1} \omega_0 \in L^1 (\mathbb{R}^n),~ 1 \le q \le \infty$ and $0 \le \mu \le n+m+1$.
Then
\[
\begin{split}
	&\biggl\| |x|^\mu \biggl( \omega (t) - \sum_{p=2}^{m+1} \Omega_{p} (t) \biggr) \biggr\|_{L^q (\mathbb{R}^n)}
	\le
	C t^{-\frac{n}2(1-\frac1q)} \left( t^{ -\frac{m}2-\frac12+\frac{\mu}2} + (1+t)^{ -\frac{m}2-\frac12+\frac{\mu}2}\right) (1+t)^{-\frac12} L_m (t),
\end{split}
\]
where $\Omega_p$ and $L_m$ are defined by \eqref{defOmg} and \eqref{L_m}, respectively.
\end{proposition}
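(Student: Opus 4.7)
The plan is to repeat the Escobedo--Zuazua type argument used in the proof of Proposition \ref{prop-asymp-omg}, carrying the weight $|x|^\mu$ through every term via the spatial dichotomy $\{|y|\ge |x|/2\}$ vs.\ $\{|y|\le |x|/2\}$ that already powered the proof of Proposition \ref{prop-decay-wtomg}. The starting point is the identity \eqref{asymp-omg-base}, which expresses $\omega_{ij}(t)-\sum_{p=2}^{m+1}\Omega_{ij;p}(t)$ as a linear Taylor residue in $\omega_{0ij}$ plus two bilinear Taylor residues in $\omega_{hi}u_h$ and $\omega_{hj}u_h$. For each of the three terms I multiply by $|x|^\mu$ and split the $y$-integration into an outer piece $|y|\ge |x|/2$ and an inner piece $|y|\le |x|/2$.

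On the outer piece I use $|x|^\mu\le 2^\mu|y|^\mu$ to absorb the weight into the integrand, apply Hausdorff--Young to the unweighted Gauss kernel, and bound the linear factor by $\bigl\| |y|^\mu\omega_{0ij}\bigr\|_{L^1}$ and the nonlinear factor by combining Proposition \ref{prop-decay-wtomg} with \eqref{assmp}, yielding bounds of the form $\bigl\||y|^\mu (\omega_{hi}u_h)(s)\bigr\|_{L^1}\le C(1+s)^{-n/2-3/2+\mu/2}$, together with analogous $L^q$ variants for the $(t/2,t)$-part. On the inner piece I keep the weight on the kernel; since $|x|\le 2|x-\lambda y|$ for $\lambda\in[0,1]$, the factor $|x|^\mu$ is swallowed by Gaussian decay, costing a $t^{\mu/2}$ factor but producing no new singularity. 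For the linear term this piece is controlled by $\bigl\||y|^{n+m+1}\omega_{0ij}\bigr\|_{L^1}$; for the nonlinear terms I reuse the $(0,t/2)$ vs.\ $(t/2,t)$ subdivision and the Taylor residue representations of $\partial_t^l\nabla^\beta\partial_i G$ exactly as in Proposition \ref{prop-asymp-omg}, this time applied to the weighted kernel $|x|^\mu \partial_t^l \nabla^\beta \partial_j G$, and invoke Proposition \ref{prop-decay-wtomg} together with \eqref{assmp} to control the resulting $\bigl\||y|^k(\omega u)(s)\bigr\|_{L^r}$ factors.

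Adding up the pieces I expect the two-scale bound $t^{-n(1-1/q)/2}\bigl(t^{-m/2-1/2+\mu/2}+(1+t)^{-m/2-1/2+\mu/2}\bigr)(1+t)^{-1/2}L_m(t)$: the pure $t^{\mu/2}$-factor comes from the inner piece and reflects the short-time behaviour of the Gauss kernel, while $(1+t)^{-m/2-1/2+\mu/2}$ emerges from the outer piece and from the $(0,t/2)$-part of the nonlinear remainder. The logarithmic borderline factor $L_m(t)$ for $m=n$ is inherited from Proposition \ref{prop-asymp-omg} and appears in exactly the same step.

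The main obstacle will be the $(t/2,t)$-integrals in the nonlinear terms. Once $|x|^\mu$ is absorbed into a derivative of the kernel, the integrand acquires a factor of the form $(t-s)^{-1+k/2}$ whose blow-up at $s=t$ is worst for the highest Taylor-order kernel; the choice of where to truncate the Taylor expansion in each integrand therefore has to be coordinated with both $\mu$ and the weight $k$ of the $\omega$-factor supplied by Proposition \ref{prop-decay-wtomg}, so that the integral remains convergent and simultaneously delivers the sharp exponent. It is precisely this borderline balancing that forces the assumption $\mu\le n+m+1$ and produces the log factor $L_m(t)$ when $m=n$. Once these cases are settled, the remaining bookkeeping is a direct combination of the estimates already established in Propositions \ref{prop-decay-wtomg} and \ref{prop-asymp-omg}.
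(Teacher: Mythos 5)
Your plan is in the right family of ideas---weight the identity \eqref{asymp-omg-base} and split $|y|\ge|x|/2$ against $|y|\le|x|/2$---but it diverges from the paper at the decisive point, and the divergence is where the unresolved difficulty sits. The paper does \emph{not} run a direct weighted estimate for every $\mu$: after recording the crude triangle-inequality bound from Proposition \ref{prop-decay-wtomg}, it carries out the $Q_1,\dots,Q_5$ decomposition \emph{only at the endpoint} $\mu=n+m+1$, and then obtains all $0\le\mu\le n+m+1$ by "coupling" this with the unweighted Proposition \ref{prop-asymp-omg} via the interpolation inequality $\||x|^\mu f\|_{L^q}\le\|f\|_{L^q}^{1-\theta}\,\||x|^{n+m+1}f\|_{L^q}^{\theta}$ with $\theta=\mu/(n+m+1)$, which reproduces exactly the stated two-scale rate. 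The point of restricting to the endpoint is that there the weight, transferred to the kernel on the inner region via $|x|\le 2|x-\lambda y|$, \emph{always} compensates the time singularity.

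The concrete gap in your version is the inner-region piece of the Duhamel remainder for $s$ close to $t$. You propose to reuse the time-Taylor residue $\int_0^1\tfrac{\partial_t^N\partial_jG(t-\lambda s,x-y)}{N!}\lambda^{N-1}(-s)^N\,d\lambda$ (with $N=\max\{l:2l\le m\}+1$) on all of $(0,t)\times\{|y|\le|x|/2\}$ with the weight attached to the kernel; its weighted $L^q$ norm behaves like $(t-\lambda s)^{-\frac n2(1-\frac1q)-N-\frac12+\frac\mu2}$. At $\mu=n+m+1$ the exponent of the $L^1$ norm is $-N+\frac{n+m}2\ge\frac n2-1\ge0$ and the paper's estimate of $\rho_2$ closes; but for small and intermediate $\mu$ this power is non-integrable over $s\in(t/2,t)$, $\lambda\in(0,1)$, so the same decomposition cannot be reused---one must re-split $Q_2$ in time and abandon the time-Taylor expansion near $s=t$, and the correct splitting (and the choice of which moment of $\omega u$ to invoke) changes with $\mu$ and with $q$. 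You flag exactly this as the "borderline balancing" to be "coordinated," but that coordination is the entire content of the proof and is not supplied; as written, your scheme at $\mu=0$ would not even recover Proposition \ref{prop-asymp-omg}. Either carry out the $\mu$-dependent case analysis (the paper does a version of this later, in the proof of Theorem \ref{thm-st}, where the admissible range of $\mu$ genuinely depends on $q$), or adopt the paper's shortcut: prove only $\mu=n+m+1$ and interpolate with Proposition \ref{prop-asymp-omg}.
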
 
\begin{proof}
Proposition \ref{prop-decay-wtomg} and the definition of $\Omega_p$ immediately gives
\[
	\biggl\| |x|^\mu \biggl( \omega (t) - \sum_{p=2}^{m+1} \Omega_{p} (t) \biggr) \biggr\|_{L^q (\mathbb{R}^n)}
	\le
	C t^{-\frac{n}2 (1-\frac1q)} \left( t^{-1+\frac{\mu}2} + (1+t)^{-\frac{m}2 - \frac12 + \frac{\mu}2} \right).
\]
We firstly choose $\mu = n+m+1$.
We treat the right hand side of \eqref{asymp-omg-base}.
The first term is separated to
\[
\begin{split}
	&\int_{\mathbb{R}^n} \biggl( G(t,x-y) - \sum_{|\alpha| = 0}^{m+1} \frac{\nabla^\alpha G(t)}{\alpha!} (-y)^\alpha \biggr) \omega_{0ij} (y) dy\\
	=&
	\int_{|y| \ge |x|/2} \biggl( G(t,x-y) - \sum_{|\alpha| = 0}^{m+1} \nabla^\alpha G(t,x) (-y)^\alpha \biggr) \omega_{0ij} (y) dy\\
	&+
	\sum_{|\alpha|=m+2} \int_{|y| \le |x|/2} \int_0^1 \frac{\nabla^\alpha G(t,x-\lambda y)}{\alpha!} \lambda^{m+1} (-y)^\alpha \omega_{0ij} (y) d\lambda dy.
\end{split}
\]
Then
\[
\begin{split}
	&\biggl\| |x|^{n+m+1} \int_{\mathbb{R}^n} \biggl( G(t,x-y) - \sum_{|\alpha| = 0}^{m+1} \frac{\nabla^\alpha G(t)}{\alpha!} (-y)^\alpha \biggr) \omega_{0ij} (y) dy \biggr\|_{L^q (\mathbb{R}^n)}\\
	\le&
	C \biggl(
		\bigl\| G(t) \bigr\|_{L^q (\mathbb{R}^n)} \bigl\| |x|^{n+m+1} \omega_{0ij} \bigr\|_{L^1 (\mathbb{R}^n)}
		+ \sum_{|\alpha|=0}^{m+1} \bigl\| |x|^{|\alpha|} \nabla^\alpha G(t) \bigr\|_{L^q (\mathbb{R}^n)} \bigl\| |x|^{n+m+1-|\alpha|} (-x)^\alpha \omega_{0ij} \bigr\|_{L^1 (\mathbb{R}^n)}
	\biggr)\\
	&+
	C \sum_{|\alpha|=m+2} \bigl\| |x|^{n+m+1} \nabla^\alpha G(t) \bigr\|_{L^q (\mathbb{R}^n)} \bigl\| (-x)^\alpha \omega_{0ij} \bigr\|_{L^1 (\mathbb{R}^n)}
	\le
	C t^{-\frac{n}2 (1-\frac1q)} (1+t)^{-\frac12 + \frac{n}2}.
\end{split}
\]
For the second term of \eqref{asymp-omg-base}, we split the domain $(0,t)\times\mathbb{R}^n$ to
\begin{gather}
	Q_1 = (0,t/2] \times \{ y \in \mathbb{R}^n~ |~ |y| > |x|/2 \},\quad
	Q_2 = (0,t) \times \{ y \in \mathbb{R}^n~ |~ |y| \le |x|/2 \},\label{Q}\\
	Q_3 = (t/2,t) \times \{ y \in \mathbb{R}^n~ |~ |y| > |x|/2 \},\quad
	Q_4 = Q_2,\quad
	Q_5 = Q_1 \cup Q_3.\notag
\end{gather}
Then
\[
	\int_0^t \int_{\mathbb{R}^n}
		\biggl( \partial_j G (t-s,x-y) - \sum_{2l + |\beta|=0}^m \frac{\partial_t^l \nabla^\beta \partial_j G(t,x)}{l!\beta!} (-s)^l (-y)^\beta \biggr)
		\left( \omega_{hi} u_h \right) (s,y)
	dyds
	=
	\rho_1 (t) + \cdots + \rho_5 (t),
\]
where
\[
	\rho_k (t)
	=
	\left\{
	\begin{array}{lr}
	\displaystyle
		\iint_{Q_k}
			\biggl( \partial_j G(t-s,x-y) - \sum_{2l=0}^m \frac{\partial_t^l \partial_j G(t,x-y)}{l!} (-s)^l \biggr)
			\left( \omega_{hi} u_h \right) (s,y)
		dyds,
		&
		k = 1,2,3,\\
	\displaystyle
		\sum_{2l=0}^m \iint_{Q_k}
			\biggl( \frac{\partial_t^l \partial_j G(t,x-y)}{l!} - \sum_{|\beta|=0}^{m-2l} \frac{\partial_t^l \nabla^\beta \partial_j G(t,x)}{l!\beta!} (-y)^\beta \biggr)
			(-s)^l \left( \omega_{hi} u_h \right) (s,y)
		dyds,
		&
		k = 4,5.
	\end{array}
	\right.
\]
The Taylor theorem leads that
\[
	\rho_1 (t)
	=
	\int_0^{t/2} \int_{|y| > |x|/2} \int_0^1
		\frac{\partial_t^N \partial_j G(t-\lambda s, x-y)}{N!} \lambda^{N-1} (-s)^N \left( \omega_{hi} u_h \right) (s,y)
	d\lambda dyds
\]
and
\[
	\rho_2 (t)
	=
	\int_0^t \int_{|y| \le |x|/2} \int_0^1
		\frac{\partial_t^N \partial_j G(t-\lambda s, x-y)}{N!} \lambda^{N-1} (-s)^N \left( \omega_{hi} u_h \right) (s,y)
	d\lambda dyds
\]
for $N = \max \{ l \in \mathbb{Z}_+~ |~ 2l \le m \} + 1$.
Hence, from \eqref{assmp} and Proposition \ref{prop-decay-wtomg},
\[
\begin{split}
	\bigl\| |x|^{n+m+1} \rho_1 (t) \bigr\|_{L^q (\mathbb{R}^n)}
	\le&
	C \int_0^{t/2}
		(t-s)^{-\frac{n}2 (1-\frac1q)-N-\frac12}
		s^N \left\| |x|^{n+m+1} \left( \omega_{hi} u_h \right) (s) \right\|_{L^1 (\mathbb{R}^n)}
	ds\\
	\le&
	C \int_0^{t/2}
		(t-s)^{-\frac{n}2 (1-\frac1q) -N-\frac12} s^N (1+s)^{- 1 + \frac{m}2}
	ds
	\le
	C t^{-\frac{n}2 (1-\frac1q) - \frac12 + \frac{m}2}
\end{split}
\]
and
\[
\begin{split}
	\bigl\| |x|^{n+m+1} \rho_2 (t) \bigr\|_{L^q (\mathbb{R}^n)}
	\le&
	C \int_0^{t/2}
		(t-s)^{-\frac{n}2(1-\frac1q)-N+\frac{n}2+\frac{m}2}
		s^N \left\| \left( \omega_{hi} u_h \right) (s) \right\|_{L^1 (\mathbb{R}^n)}
	ds\\
	&+
	C \int_{t/2}^t
		(t-s)^{-N+\frac{n}2+\frac{m}2}
		s^N \left\| \left( \omega_{hi} u_h \right) (s) \right\|_{L^q (\mathbb{R}^n)}
	ds\\
	\le&
	C \int_0^{t/2}
		(t-s)^{-\frac{n}2 (1-\frac1q) -N+\frac{n}2 + \frac{m}2}
		s^N (1+s)^{-\frac{n}2 - \frac32}
	ds\\
	&+
	C \int_{t/2}^t
		(t-s)^{-N+\frac{n}2 + \frac{m}2}
		s^N (1+s)^{-\frac{n}2 (1-\frac1q) -\frac{n}2 - \frac32}
	ds\\
	\le&
	C t^{-\frac{n}2 (1-\frac1q)} (1+t)^{-\frac12 + \frac{m}2} L_m (t).
\end{split}
\]
By \eqref{assmp} and Proposition \ref{prop-decay-wtomg}, we have that
\[
\begin{split}
	\bigl\| |x|^{n+m+1} \rho_3 (t) \bigr\|_{L^q (\mathbb{R}^n)}
	\le&
	C \int_{t/2}^t (t-s)^{-\frac12} \left\| |y|^{n+m+1} \left( \omega_{hi} u_h \right) (s) \right\|_{L^q (\mathbb{R}^n)} ds\\
	&+
	C \sum_{2l=0}^m t^{-\frac{n}2 (1-\frac1q) -l - \frac12} \int_{t/2}^t s^l  \left\| |y|^{n+m+1} \left( \omega_{hi} u_h \right) (s) \right\|_{L^1 (\mathbb{R}^n)} ds\\
	\le&
	C \int_{t/2}^t (t-s)^{-\frac12} s^{-\frac{n}2 (1-\frac1q)} (1+s)^{-1+\frac{m}2} ds
	+
	C \sum_{2l=0}^m t^{-\frac{n}2 (1-\frac1q) -l-\frac12} \int_{t/2}^t s^l (1+s)^{-1+\frac{m}2} ds\\
	\le&
	C t^{-\frac{n}2 (1-\frac1q)} (1+t)^{- \frac12 + \frac{m}2}
	\le
	C t^{-\frac{n}2 (1-\frac1q)} (1+t)^{-\frac12 + \frac{n}2}.
\end{split}
\]
Since
\[
\begin{split}
	|x|^{n+m+1} \rho_4 (t)
	=&
	\sum_{2l=0}^m \sum_{|\beta| = m - 2l + 1} \int_0^t \int_{|y| \le |x|/2} \int_0^1
		|x|^{n+m+1} \frac{\partial_t^l \nabla^\beta \partial_j G(t,x-\lambda y)}{l!\beta!}\\
		&\hspace{5mm}\times
		\lambda^{m-2l} (-s)^l (-y)^\beta \left( \omega_{hi} u_h \right) (s,y)
	d\lambda dyds,
\end{split}
\]
we obtain that
\[
\begin{split}
	\bigl\| |x|^{n+m+1} \rho_4 (t) \bigr\|_{L^q (\mathbb{R}^n)}
	\le&
	C t^{-\frac{n}2 (1-\frac1q)-\frac12+\frac{n}2} \sum_{2l=0}^m \int_0^t s^l (1+s)^{- \frac{n}2 - 1 - l + \frac{m}2} ds\\
	\le&
	C t^{-\frac{n}2 (1-\frac1q)-\frac12+\frac{n}2} L_m (t).
\end{split}
\]
The last term fulfills that
\[
\begin{split}
	\bigl\| |x|^{n+m+1} \rho_5 (t) \bigr\|_{L^q (\mathbb{R}^n)}
	\le&
	C \sum_{2l=0}^m t^{-\frac{n}2 (1-\frac1q)-\frac12 - l}
	\int_0^t
		s^l \bigl\| |y|^{n+m+1} \left( \omega_{hi} u_h \right) (s) \bigr\|_{L^1 (\mathbb{R}^n)}
	ds\\
	\le&
	C \sum_{2l=0}^m t^{-\frac{n}2 (1-\frac1q)-\frac12 - l} \int_0^t s^l (1+s)^{- 1 + \frac{m}2} ds
	\le
	C t^{-\frac{n}2 (1-\frac1q)} (1+t)^{-\frac12 + \frac{n}2}.
\end{split}
\]
The last term of \eqref{asymp-omg-base} is treated by the similar estimates.
Therefore we get the desired estimate with $\mu = n+ m + 1$.
The coupling of this and Proposition \ref{prop-asymp-omg} completes the proof.
\end{proof}
Proposition \ref{prop-wtasymp-omg} never give far field asymptotics of $\omega$ since $\| |x|^\mu \Omega_p \|_{L^q (\mathbb{R}^n)}$ is integrable for any large $\mu$.
This proposition is prepared to prove our main assertions.
The above inequalities lead for $1 \le m \le n$ and $1 \le q \le \infty$ that
\begin{equation}\label{bilin-asymp}
\begin{split}
	\biggl\|
		\omega_{hk} u_h - \sum_{p=3}^{m+2} \mathcal{I}_{hk;n+p}
	\biggr\|_{L^q (\mathbb{R}^n)}
	\le&
	\left\| \omega_{hk} \right\|_{L^q (\mathbb{R}^n)}
	\bigl\| u_h - \sum_{i=1}^m (U_{h;i} + U_{h;i}^T) \bigr\|_{L^\infty (\mathbb{R}^n)}\\
	&+
	\sum_{i=1}^m \bigl\| \omega_{hk} - \sum_{p=2}^{m+2-i} \Omega_{hk;p} \bigr\|_{L^q (\mathbb{R}^n)}
	\bigl\| U_{h;i} + U_{h;i}^T \bigr\|_{L^\infty (\mathbb{R}^n)}\\
	\le&
	C t^{-\frac{n}2 (1 - \frac1q) - \frac{n}2 - \frac{m}2 - 1} (1+t)^{-\frac12} L_m (t).
\end{split}
\end{equation}
Upon the condition $|x|^{n+m+1} \omega_0 \in L^1 (\mathbb{R}^n)$, we have for $0 \le \mu \le n+m+1$ that
\[
\begin{split}
	&\biggl\|
		|x|^\mu \biggl( \omega_{hk} u_h - \sum_{p=3}^{m+2} \mathcal{I}_{hk;n+p} \biggr)
	\biggr\|_{L^q (\mathbb{R}^n)}
	\le
	\bigl\| |x|^\mu \omega_{hk} \bigr\|_{L^q (\mathbb{R}^n)}
	\biggl\| u_h - \sum_{i=1}^m (U_{h;i} + U_{h;i}^T) \biggr\|_{L^\infty (\mathbb{R}^n)}\\
	&+
	\sum_{i=1}^m \biggl\| |x|^{\mu} \bigl( \omega_{hk} - \sum_{p=2}^{m+2-i} \Omega_{hk;p} \bigr) \biggr\|_{L^q (\mathbb{R}^n)}
	\bigl\| U_{h;i} + U_{h;i}^T \bigr\|_{L^\infty (\mathbb{R}^n)}\\
	\le&
	C t^{- \frac{n}2 (1 - \frac1q) - \frac{n}2 - \frac{m}2} \left( t^{-1 + \frac\mu{2}} + (1+t)^{ -1 + \frac\mu{2}} \right) (1+t)^{-\frac12} L_m (t).
\end{split}
\]
We relieve the singularity at $t = 0$ by using the Minkowski inequality together with \eqref{assmp}, Proposition \ref{prop-decay-wtomg} and \eqref{int-I}, then
\begin{equation}\label{bilin-asymp-wt}
\begin{split}
	&\biggl\|
		|x|^\mu \biggl( \omega_{hk} u_h - \sum_{p=3}^{m+2} \mathcal{I}_{hk;n+p} \biggr)
	\biggr\|_{L^q (\mathbb{R}^n)}
	\le
	C t^{- \frac{n}2 (1-\frac1q) -\frac{n}2 - \frac{m}2 - 1 + \frac\mu{2}} (1+t)^{-\frac12} L_m (t).
\end{split}
\end{equation}
Since
\[
	\omega_{hk} u_h (t) - \sum_{p=3}^{m+1} \mathcal{I}_{hk;n+p} (t) - \mathcal{I}_{hk;n+m+2} (1+t)
	=
	\omega_{hk} u_h (t) - \sum_{p=3}^{m+2} \mathcal{I}_{hk;n+p} (t) - \int_0^1 \partial_t \mathcal{I}_{hk;n+m+2} (t + \lambda) d\lambda,
\]
$\partial_t \mathcal{I}_{hk;n+p} (t,x) = t^{-n-\frac{p}2 -1} \partial_t \mathcal{I}_{hk;n+p} (1,t^{-\frac12}x)$, and $\partial_t \mathcal{I}_{hk;n+p} (1,x) \in L^q (\mathbb{R}^n)$ for $1 \le q \le \infty$, we see that
\[
\begin{split}
	&\biggl\|
		|x|^\mu \biggl( \omega_{hk} u_h (t) - \sum_{p=3}^{m+1} \mathcal{I}_{hk;n+p} (t) - \mathcal{I}_{hk;n+m+2} (1+t) \biggr)
	\biggr\|_{L^q (\mathbb{R}^n)}\\
	&\le
	C t^{- \frac{n}2 (1 - \frac1q)} \left( t^{- \frac{n}2 - \frac{m}2 - \frac12 + \frac\mu{2}} + (1+t)^{- \frac{n}2 - \frac{m}2 - \frac12 + \frac\mu{2}} \right) (1+t)^{-1} L_m (t).
\end{split}
\]
Those inequalities play important role in the proof of our assertions.
Moreover they guarantee that $U_{n+m},~ U_{n+m}^T$ and $U_{n+m}^S$ for $1 \le m \le n$ are well-defined in $L^1 (\mathbb{R}^n) \cap L^\infty (\mathbb{R}^n)$, and \eqref{usa1} holds.
We show that $J_{n+m}$ is well-defined.
Indeed, by the similar calculus as in the proof of Proposition \ref{prop-asymp-omg}, the first term of $J_{n+m}$ is represented by
\[
\begin{split}
	&\int_0^t \int_{\mathbb{R}^n}
	\biggl(
		R_k R_j G(t-s,x-y) - \sum_{2l+|\beta|=0}^{n+m} \frac{\partial_t^l \nabla^\beta R_k R_j G(t,x)}{l!\beta!} (-s)^l (-y)^\beta
	\biggr)
	\mathcal{I}_{hk;n+m+2} (s,y)
	dyds\\
	&=
	\int_0^{t/2} \int_{\mathbb{R}^n} \int_0^1
		\frac{\partial_t^N R_k R_j G(t-\lambda s,x-y)}{N!}
		\lambda^{N-1} (-s)^N \mathcal{I}_{hk;n+m+2} (s,y)
	d\lambda dyds\\
	&+
	\sum_{2l=0}^m \sum_{|\beta| = n+m+1-2l} \int_0^{t/2} \int_{\mathbb{R}^n} \int_0^1
		\frac{\partial_t^l \nabla^\beta R_k R_j G(t,x-\lambda y)}{l!\beta!} \lambda^{n+m-2l}
		(-s)^l (-y)^\beta \mathcal{I}_{hk;n+m+2} (s,y)
	d\lambda dyds\\
	&+
	\int_{t/2}^t \int_{\mathbb{R}^n} \int_0^1
		\nabla R_k R_j G(t-s,x-\lambda y)
		\cdot (-y) \mathcal{I}_{hk;n+m+2} (s,y)
	d\lambda dyds\\
	&-
	\sum_{2l+|\beta|=1}^{n+m} \frac{\partial_t^l \nabla^\beta R_k R_j G(t,x)}{l!\beta!} \int_{t/2}^t \int_{\mathbb{R}^n}
	(-s)^l (-y)^\beta
	\mathcal{I}_{hk;n+m+2} (s,y)
	dyds,
\end{split}
\]
where $N = \max \{ l \in \mathbb{Z}~ |~ 2l \le n+m \} + 1$.
Here we used \eqref{Izr}.
Hence, by \eqref{decay-I}, we see for $1 \le q \le \infty$ that
\[
\begin{split}
	&\biggl\| \int_0^t \int_{\mathbb{R}^n}
	\biggl(
		R_k R_j G(t-s,x-y) - \sum_{2l+|\beta|=0}^{n+m} \frac{\partial_t^l \nabla^\beta R_k R_j G(t,x)}{l!\beta!} (-s)^l (-y)^\beta
	\biggr)
	\mathcal{I}_{hk;n+m+2} (s,y)
	dyds \biggr\|_{L^q (\mathbb{R}^n)}\\
	&\le
	C \int_0^{t/2}
		(t-s)^{-\frac{n}2 (1-\frac1q)-N}
		s^{-\frac{n}2-\frac{m}2-1+N}
	ds
	+
	C t^{-\frac{n}2 (1-\frac1q)-\frac{n}2-\frac{m}2-\frac12} \int_0^{t/2}
		s^{-\frac12}
	ds\\
	&+
	C \int_{t/2}^t
		(t-s)^{-\frac12}
		s^{-\frac{n}2 (1-\frac1q)-\frac{n}2-\frac{m}2-\frac12}
	ds
	+
	C \sum_{2l+|\beta|=1}^{n+m} t^{-\frac{n}2 (1-\frac1q)-l-\frac{|\beta|}2} \int_{t/2}^t
		s^{-\frac{n}2-\frac{m}2-1+l+\frac{|\beta|}2}
	ds.
\end{split}
\]
The right hand side is integrable for any fixed $t > 0$.
The second term of $J_{n+m}$ is treated in the similar way.
Therefore $J_{n+m}$ also is well-defined in $L^1 (\mathbb{R}^n) \cap L^\infty (\mathbb{R}^n)$.
The decay properties \eqref{decay-U} are coming from the scaling property of those functions.
The estimates \eqref{decay-tV} and \eqref{decay-K} are straightforward.
\section{Proof of main results}
In this section, we firstly prove Theorem \ref{thm-st}.
This proof also show Proposition \ref{prop-wtasymp-omg} (\eqref{s0}, and \eqref{est-wtr} with $m = 0$ immediately gives this proposition).\\

\paragraph{\it Proof of Theorem \ref{thm-st}.}
Firstly, we derive the asymptotic expansion.
Since $\int_{\mathbb{R}^n} \omega_{hk} u_h dx = 0$, $u$ denoted by \eqref{MSu} is expanded to
\begin{equation}\label{s0}
\begin{split}
	u_j (t)
	=&
	\sum_{m=1}^n (U_{j;m} + U_{j;m}^S) (t)
	+ r_{0,n} (t) + r_{1,n} (t) + r_{2,n} (t) + r_{3,n} (t),
\end{split}
\end{equation}
where
\[
\begin{split}
	r_{0,n} (t)
	=&
	- \sum_{k=1}^n R_k (-\Delta)^{-1/2} G(t) * \omega_{0kj}
	+ \sum_{|\alpha| = 0}^{n+1} \sum_{k=1}^n \frac{\nabla^\alpha R_k (-\Delta)^{-1/2} G(t)}{\alpha!} \int_{\mathbb{R}^n} (-y)^\alpha \omega_{0kj} (y) dy,\\
	r_{1,n} (t)
	=&
	- \sum_{k,h=1}^n \int_0^t \int_{\mathbb{R}^n}
		\biggl(
			R_k R_j G(t-s,x-y) - \sum_{2l + |\beta| = 0}^n \frac{\partial_t^l \nabla^\beta R_k R_j G (t,x)}{l!\beta!} (-s)^l (-y)^\beta
		\biggr)\\
		&\hspace{15mm}\times
		\left( \omega_{hk} u_h \right) (s,y)
	dyds,\\
	r_{2,n} (t)
	=&
	- \sum_{h=1}^n \int_0^t \int_{\mathbb{R}^n}
		\biggl(
			G(t-s,x-y) - \sum_{2l + |\beta| = 0}^n \frac{\partial_t^l \nabla^\beta G (t,x)}{l!\beta!} (-s)^l (-y)^\beta
		\biggr)
		\left( \omega_{hj} u_h \right) (s,y)
	dyds,\\
	r_{3,n} (t)
	=&
	\sum_{2l+|\beta|=1}^n \frac{\partial_t^l \nabla^\beta G(t)}{l!\beta!} \sum_{h=1}^n \int_t^\infty \int_{\mathbb{R}^n}
		(-s)^l (-y)^\beta (\omega_{hj} u_h) (s,y)
	dyds.
\end{split}
\]
Moreover, from \eqref{int-I}, $r_{0,n},\ldots,r_{3,n}$ are split to
\[
\begin{split}
	r_{0,n} (t)
	=&
	- \sum_{|\alpha| = n+2} \sum_{k=1}^n \frac{\nabla^\alpha R_k (-\Delta)^{-1/2} G(t)}{\alpha!} \int_{\mathbb{R}^n} (-y)^\alpha \omega_{0kj} (y) dy
	+
	r_{0,n+1} (t),\\
	r_{1,n} (t)
	=&
	- \sum_{k,h=1}^n \int_0^t \int_{\mathbb{R}^n}
		\biggl( R_k R_j G(t-s,x-y) - \sum_{2l+|\beta|=0}^{n+1} \frac{\partial_t^l \nabla^\beta R_k R_j G(t,x)}{l!\beta!} (-s)^l (-y)^\beta \biggr)
		\mathcal{I}_{hk;n+3} (s,y)
	dyds\\
	&\hspace{-10mm}+
	U_{j;n+1}^S (t)
	-
	\sum_{2l+|\beta|=n+1} \sum_{k,h=1}^n \frac{\partial_t^l \nabla^\beta R_k R_j G(t,x)}{l!\beta!}
	\int_0^t s^l (1+s)^{-\frac{n}2 - \frac32 + \frac{|\beta|}2} ds
	\int_{\mathbb{R}^n} (-1)^l (-y)^\beta \mathcal{I}_{hk;n+3} (1,y) dy\\
	&+
	r_{1,n+1} (t),\\
	r_{2,n} (t)
	=&
	- \sum_{h=1}^n \int_0^t \int_{\mathbb{R}^n} \biggl(
		G(t-s,x-y) - \sum_{2l + |\beta| = 0}^{n+1} \frac{\partial_t^l \nabla^\beta G(t,x)}{l!\beta!} (-s)^l (-y)^\beta
	\biggr)
	\mathcal{I}_{hj;n+3} (s,y)
	dyds\\
	&-
	\sum_{2l+|\beta| = n+1} \frac{\partial_t^l \nabla^\beta G(t,x)}{l!\beta!} \sum_{h=1}^n \int_0^\infty \int_{\mathbb{R}^n}
		(-s)^l (-y)^\beta \left( \omega_{hj} u_h (s,y) - \mathcal{I}_{hj;n+3} (1+s,y) \right)
	dyds\\
	&- 
	\sum_{2l+|\beta|=n+1} \frac{\partial_t^l \nabla^\beta G(t,x)}{l!\beta!} \int_0^t s^l (1+s)^{-\frac{n}2 - \frac32 + \frac{|\beta|}2} ds \sum_{h=1}^n \int_{\mathbb{R}^n}
		(-1)^l (-y)^\beta \mathcal{I}_{hj;n+3} (1,y)
	dy\\
	&+ r_{2,n+1} (t),\\
	r_{3,n} (t)
	=&
	\sum_{2l+|\beta|=1}^n \frac{\partial_t^l \nabla^\beta G(t)}{l!\beta!} \sum_{h=1}^n \int_t^\infty \int_{\mathbb{R}^n}
		(-s)^l (-y)^\beta \mathcal{I}_{hj;n+3} (s,y)
	dyds
	+ r_{3,n+1} (t)\\
	=&
	V_{j;n+1} (t)
	+ r_{3,n+1} (t),
\end{split}
\]
where
\[
\begin{split}
	r_{0,n+1} (t)
	=&
	- \sum_{k=1}^n R_k (-\Delta)^{-1/2} G(t) * \omega_{0kj}
	+ \sum_{|\alpha| = 0}^{n+2} \sum_{k=1}^n \frac{\nabla^\alpha R_k (-\Delta)^{-1/2} G(t)}{\alpha!} \int_{\mathbb{R}^n} (-y)^\alpha \omega_{0kj} (y) dy,
\end{split}
\]
\[
\begin{split}
	r_{1,n+1} (t)
	=&
	- \sum_{k,h=1}^n \int_0^t \int_{\mathbb{R}^n}
		\biggl( R_k R_j G (t-s,x-y) - \sum_{2l+|\beta| = 0}^{n+1} \frac{\partial_t^l \nabla^\beta R_k R_j G (t,x)}{l!\beta!} (-s)^l (-y)^\beta \biggr)\\
		&\hspace{15mm}\times
		\left( \omega_{hk} u_h - \mathcal{I}_{hk;n+3} \right) (s,y)
	dyds,\\
	%
	r_{2,n+1} (t)
	=&
	- \sum_{h=1}^n \int_0^t \int_{\mathbb{R}^n} \biggl(
		G (t-s,x-y) - \sum_{2l+|\beta|=0}^{n+1} \frac{\partial_t^l \nabla^\beta G(t,x)}{l!\beta!} (-s)^l (-y)^\beta \biggr)
		\left( \omega_{hj} u_h - \mathcal{I}_{hj;n+3} \right) (s,y)
	dyds\\
	&+ \sum_{2l+|\beta|=n+1} \frac{\partial_t^l \nabla^\beta G (t,x)}{l!\beta!} \sum_{h=1}^n \int_t^\infty \int_{\mathbb{R}^n}
		(-s)^l (-y)^\beta
		\left( \omega_{hj} u_h (s,y) - \mathcal{I}_{hj;n+3} (1+s,y) \right)
	dyds,\\
	r_{3,n+1} (t)
	=&
	\sum_{2l+|\beta|=1}^n \frac{\partial_t^l \nabla^\beta G(t)}{l!\beta!} \sum_{h=1}^n \int_t^\infty \int_{\mathbb{R}^n}
		(-s)^l (-y)^\beta \left( \omega_{hj} u_h - \mathcal{I}_{hj;n+3} \right) (s,y)
	dyds.
\end{split}
\]
Therefore
\[
	r_{0,n} + \cdots + r_{3,n}
	=
	K_{j;n+1} + U_{j;n+1} + U_{j;n+1}^S + V_{j;n+1} + J_{j;n+1}
	+
	r_{0,n+1} + \cdots + r_{3,n+1}.
\]
We repeat this procedure, then
\[
\begin{split}
	r_{1,n+1} (t)
	=&
	- \sum_{2l+|\beta|=n+2} \sum_{k,h=1}^n \frac{\partial_t^l \nabla^\beta R_k R_j G(t)}{l!\beta!} \int_0^t s^l (1+s)^{-\frac{n}2 - 2 + \frac{|\beta|}2} ds
	\int_{\mathbb{R}^n} (-1)^l (-y)^\beta \mathcal{I}_{hk;n+4} (1,y) dy\\
	&+
	U_{j;n+2}^S (t)\\
	&-
	\sum_{k,h=1}^n \int_0^t \int_{\mathbb{R}^n}
		\biggl( R_k R_j G(t-s,x-y) - \sum_{2l+|\beta|=0}^{n+2} \frac{\partial_t^l \nabla^\beta R_k R_j G(t,x)}{l!\beta!} (-s)^l (-y)^\beta \biggr)
		\mathcal{I}_{hk;n+4} (s,y)
	dyds\\
	&+
	r_{1,n+2} (t),\\
	r_{2,n+1} (t)
	=&
	- \sum_{2l + |\beta| = n+2} \frac{\partial_t^l \nabla^\beta G (t)}{l!\beta!}
	\int_0^t s^l (1+s)^{-\frac{n}2 - 2 + \frac{|\beta|}2} ds
	 \sum_{h=1}^n 
	 \int_{\mathbb{R}^n}
		(-1)^l (-y)^\beta \mathcal{I}_{hj;n+4} (1,y)
	dy\\
	&-
	\sum_{2l+|\beta| = n+2} \frac{\partial_t^l \nabla^\beta G(t)}{l!\beta!}
	\sum_{h=1}^n \int_0^\infty \int_{\mathbb{R}^n}
		(-s)^l (-y)^\beta \bigl( \omega_{hj} u_h (s,y) - \mathcal{I}_{hj;n+3} (s,y)\\
		&\hspace{25mm}
		- \mathcal{I}_{hj;n+4} (1+s,y) \bigr)
	dyds\\
	&-
	\int_0^t \int_{\mathbb{R}^n}
		\biggl( G(t-s,x-y) - \sum_{2l+|\beta|=0}^{n+2} \frac{\partial_t^l \nabla^\beta G(t,x)}{l!\beta!} (-s)^l (-y)^\beta \biggr)
		\sum_{h=1}^n \mathcal{I}_{hj;n+4} (s,y)
	dyds\\
	&+
	\sum_{2l+|\beta|=n+1} \frac{2t^{-\frac12} \partial_t^l \nabla^\beta G(t)}{l!\beta!}
	\sum_{h=1}^n \int_{\mathbb{R}^n}
		(-1)^l (-y)^\beta \mathcal{I}_{hj;n+4} (1,y)
	dy
	+r_{2,n+2} (t),
\end{split}
\]
where
\[
\begin{split}
	r_{1,n+2} (t)
	=&
	- \sum_{k,h=1}^n \int_0^t \int_{\mathbb{R}^n}
		\biggl( R_k R_j G(t-s,x-y) - \sum_{2l+|\beta|=0}^{n+2} \frac{\partial_t^l \nabla^\beta R_k R_j G(t,x)}{l!\beta!} (-s)^l (-y)^\beta \biggr)\\
		&\hspace{25mm}\times
		\left( \omega_{hk} u_h - \mathcal{I}_{hk;n+3} - \mathcal{I}_{hk;n+4} \right) (s,y)
	dyds,\\
	r_{2,n+2} (t)
	=&
	\sum_{2l+|\beta|=n+2} \frac{\partial_t^l \nabla^\beta G(t)}{l!\beta!} \sum_{h=1}^n \int_t^\infty \int_{\mathbb{R}^n}
		(-s)^l (-y)^\beta \bigl( \omega_{hj} u_h (s,y) - \mathcal{I}_{hj;n+3} (s,y)\\
		&\hspace{30mm}
		- \mathcal{I}_{hj;n+4} (1+s,y) \bigr)
	dyds
\end{split}
\]
\[
\begin{split}
	&-
	\int_0^t \int_{\mathbb{R}^n}
		\biggl( G(t-s,x-y) - \sum_{2l+|\beta|=0}^{n+2} \frac{\partial_t^l \nabla^\beta G(t,x)}{l!\beta!} (-s)^l (-y)^\beta \biggr)\\
		&\hspace{5mm}\times
		\sum_{h=1}^n \left( \omega_{hj} u_h - \mathcal{I}_{hj;n+3} - \mathcal{I}_{hj;n+4} \right) (s,y)
	dyds\\
	&+
	\sum_{2l+|\beta|=n+1} \frac{\partial_t^l \nabla^\beta G(t)}{l!\beta!} \sum_{h=1}^n \int_t^\infty \int_{\mathbb{R}^n}
		(-s)^l (-y)^\beta \bigl( \omega_{hj} u_h - \mathcal{I}_{hj;n+3} - \mathcal{I}_{hj;n+4} \bigr) (s,y)
	dyds\\
	&-
	\sum_{2l+|\beta|=n+1} \frac{\partial_t^l \nabla^\beta G(t)}{l!\beta!}
	\int_t^\infty s^l \bigl( (1+s)^{-\frac{n}2-\frac32+\frac{|\beta|}2} - s^{-\frac{n}2-\frac32+\frac{|\beta|}2} \bigr) ds\\
	&\hspace{25mm}\times
	\sum_{h=1}^n \int_{\mathbb{R}^n}
		(-1)^l (-y)^\beta \mathcal{I}_{hj;n+3} (1,y)
	dy.
\end{split}
\]
For the last term, \eqref{int-I} leads that
\[
\begin{split}
	r_{3,n+1} (t)
	=&
	\sum_{2l+|\beta| = 1}^n
	\frac{2t^{-\frac{n}2-1+l+\frac{|\beta|}2}}{n+2-2l-|\beta|}
	\frac{\partial_t^l \nabla^\beta G(t)}{l!\beta!}
	\sum_{h=1}^n \int_{\mathbb{R}^n}
		(-1)^l (-y)^\beta \mathcal{I}_{hj;n+4} (1,y)
	dy
	+
	r_{3,n+2} (t),
\end{split}
\]
where
\[
\begin{split}
	r_{3,n+2} (t)
	=&
	\sum_{2l+|\beta|=1}^n \frac{\partial_t^l \nabla^\beta G (t)}{l!\beta!} \sum_{h=1}^n \int_t^\infty \int_{\mathbb{R}^n}
		(-s)^l (-y)^\beta \bigl( \omega_{hj} u_h - \mathcal{I}_{hj;n+3} - \mathcal{I}_{hj;n+4} \bigr) (s,y)
	dyds.
\end{split}
\]
Therefore
\[
\begin{split}
	r_{0,n+1} + \cdots + r_{3,n+1}
	=&
	K_{j;n+2} + U_{j;n+2} + U_{j;n+2}^S + V_{j;n+2} + J_{j;n+2}
	+ r_{0,n+2} + \cdots + r_{3,n+2}.
\end{split}
\]
We expand the first term of $r_{2,n+2}$, then, from \eqref{int-I},
\[
\begin{split}
	&\sum_{2l+|\beta|=n+2} \frac{\partial_t^l \nabla^\beta G (t)}{l!\beta!} \sum_{h=1}^n \int_t^\infty \int_{\mathbb{R}^n}
		(-s)^l (-y)^\beta \bigl( \omega_{hj} u_h (s,y) - \mathcal{I}_{hj;n+3} (s,y) - \mathcal{I}_{hj;n+4} (1+s,y) \bigr)
	dyds\\
	=&
	\sum_{2l+|\beta|=n+2} \frac{\partial_t^l \nabla^\beta G (t)}{l!\beta!} \sum_{h=1}^n \int_t^\infty \int_{\mathbb{R}^n}
		(-s)^l (-y)^\beta \mathcal{I}_{hj;n+5} (s,y)
	dyds\\
	&-
	\sum_{2l+|\beta|=n+2} \frac{\partial_t^l \nabla^\beta G (t)}{l!\beta!} \sum_{h=1}^n \int_t^\infty \int_{\mathbb{R}^n}
		(-s)^l (-y)^\beta \bigl( \mathcal{I}_{hj;n+4} (1+s,y) - \mathcal{I}_{hj;n+4} (s,y) \bigr)
	dyds\\
	&+
	\sum_{2l+|\beta|=n+2} \frac{\partial_t^l \nabla^\beta G (t)}{l!\beta!} \sum_{h=1}^n \int_t^\infty \int_{\mathbb{R}^n}
		(-s)^l (-y)^\beta \biggl( \omega_{hj} u_h - \sum_{p=3}^5 \mathcal{I}_{hj;n+p} \biggr) (s,y)
	dyds\\
	=&
	\sum_{2l+|\beta|=n+2} \frac{2t^{-\frac12} \partial_t^l \nabla^\beta G (t)}{l!\beta!} \sum_{h=1}^n \int_{\mathbb{R}^n}
		(-1)^l (-y)^\beta \mathcal{I}_{hj;n+5} (1,y)
	dy\\
	&-
	\sum_{2l+|\beta|=n+2} \frac{\partial_t^l \nabla^\beta G(t)}{l!\beta!} \int_t^\infty s^l \left( (1+s)^{-\frac{n}2-2+\frac{|\beta|}2} - s^{-\frac{n}2-2+\frac{|\beta|}2} \right) ds
	\sum_{h=1}^n \int_{\mathbb{R}^n}
		(-1)^l (-y)^\beta \mathcal{I}_{hj;n+4} (1,y)
	dy\\
	&+
	\sum_{2l+|\beta|=n+2} \frac{\partial_t^l \nabla^\beta G (t)}{l!\beta!} \sum_{h=1}^n \int_t^\infty \int_{\mathbb{R}^n}
		(-s)^l (-y)^\beta \biggl( \omega_{hj} u_h - \sum_{p=3}^5 \mathcal{I}_{hj;n+p} \biggr) (s,y)
	dyds.
\end{split}
\]
For the second term of $r_{2,n+2}$, we see
\[
\begin{split}
	&- \int_0^t \int_{\mathbb{R}^n}
		\biggl( G(t-s,x-y) - \sum_{2l+|\beta|=0}^{n+2} \frac{\partial_t^l \nabla^\beta G(t,x)}{l!\beta!} \biggr)
		\sum_{h=1}^n \biggl( \omega_{hj} u_h - \sum_{p=3}^4 \mathcal{I}_{hj;n+p} \biggr) (s,y)
	dyds\\
	=&
	- \sum_{2l+|\beta|=n+3} \frac{\partial_t^l \nabla^\beta G(t)}{l!\beta!} \sum_{h=1}^n \int_0^t \int_{\mathbb{R}^n}
		(-s)^l (-y)^\beta \biggl( \omega_{hj} u_h - \sum_{p=3}^4 \mathcal{I}_{hj;n+p} \biggr) (s,y)
	dyds\\
	&-
	\int_0^t \int_{\mathbb{R}^n}
		\biggl( G(t-s,x-y) - \sum_{2l+|\beta|=0}^{n+3} \frac{\partial_t^l \nabla^\beta G(t,x)}{l!\beta!} (-s)^l (-y)^\beta \biggr)
		\sum_{h=1}^n \biggl( \omega_{hj} u_h - \sum_{p=3}^4 \mathcal{I}_{hj;n+p} \biggr) (s,y)
	dyds\\
	=&
	- \sum_{2l+|\beta| = n+3} \frac{\partial_t^l \nabla^\beta G(t)}{l!\beta!}
	\int_0^t
		s^l (1+s)^{-\frac{n}2 - \frac52 + \frac{|\beta|}2}
	ds
	\sum_{h=1}^n \int_{\mathbb{R}^n}
		(-1)^l (-y)^\beta \mathcal{I}_{hj;n+5} (1,y)
	dy\\
	&-
	\sum_{2l+|\beta|=n+3} \frac{\partial_t^l \nabla^\beta G(t)}{l!\beta!} \sum_{h=1}^n \int_0^\infty \int_{\mathbb{R}^n}
		(-s)^l (-y)^\beta \biggl( \omega_{hj} u_h (s,y) - \sum_{p=3}^4 \mathcal{I}_{hj;n+p} (s,y) - \mathcal{I}_{hj;n+5} (1+s,y) \biggr)
	dyds\\
	&-
	\int_0^t \int_{\mathbb{R}^n}
		\biggl( G(t-s,x-y) - \sum_{2l+|\beta|=0}^{n+3} \frac{\partial_t^l \nabla^\beta G(t,x)}{l!\beta!} (-s)^l (-y)^\beta \biggr)
		\sum_{h=1}^n \mathcal{I}_{hj;n+5} (s,y)
	dyds\\
	&-
	\int_0^t \int_{\mathbb{R}^n}
		\biggl( G(t-s,x-y) - \sum_{2l+|\beta|=0}^{n+3} \frac{\partial_t^l \nabla^\beta G(t,x)}{l!\beta!} (-s)^l (-y)^\beta \biggr)
		\sum_{h=1}^n \biggl( \omega_{hj} u_h - \sum_{p=3}^5 \mathcal{I}_{hj;n+p} \biggr) (s,y)
	dyds\\
	&+
	\sum_{2l+|\beta|=n+3} \frac{\partial_t^l \nabla^\beta G(t)}{l!\beta!} \sum_{h=1}^n \int_t^\infty \int_{\mathbb{R}^n}
		(-s)^l (-y)^\beta \biggl( \omega_{hj} u_h (s,y) - \sum_{p=3}^4 \mathcal{I}_{hj;n+p} (s,y) - \mathcal{I}_{hj;n+5} (1+s,y) \biggr)
	dyds.
\end{split}
\]
For the third term of $r_{2,n+2}$,
\[
\begin{split}
	&\sum_{2l+|\beta|=n+1} \frac{\partial_t^l \nabla^\beta G(t)}{l!\beta!} \sum_{h=1}^n \int_t^\infty \int_{\mathbb{R}^n}
		(-s)^l (-y)^\beta \biggl( \omega_{hj} u_h - \sum_{p=3}^4 \mathcal{I}_{hj;n+p} \biggr) (s,y)
	dyds\\
	=&
	\sum_{2l+|\beta|=n+1} \frac{t^{-1} \partial_t^l \nabla^\beta G(t)}{l!\beta!} \sum_{h=1}^n \int_{\mathbb{R}^n}
		(-1)^l (-y)^\beta \mathcal{I}_{hj;n+5} (1,y)
	dy\\
	&+ 
	\sum_{2l+|\beta|=n+1} \frac{\partial_t^l \nabla^\beta G(t)}{l!\beta!} \sum_{h=1}^n \int_t^\infty \int_{\mathbb{R}^n}
		(-s)^l (-y)^\beta \biggl( \omega_{hj} u_h - \sum_{p=3}^5 \mathcal{I}_{hj;n+p} \biggr) (s,y)
	dyds.
\end{split}
\]
The last term of $r_{2,n+2}$ is $\tilde{V}_{j;n+3}$.
Hence
\[
\begin{split}
	&r_{2,n+2} (t)
	=
	\sum_{2l+|\beta|=n+2} \frac{2t^{-\frac12} \partial_t^l \nabla^\beta G (t)}{l!\beta!} \sum_{h=1}^n \int_{\mathbb{R}^n}
		(-1)^l (-y)^\beta \mathcal{I}_{hj;n+5} (1,y)
	dy\\
	&-
	\sum_{2l+|\beta| = n+3} \frac{\partial_t^l \nabla^\beta G(t)}{l!\beta!}
	\int_0^t
		s^l (1+s)^{-\frac{n}2 - \frac52 + \frac{|\beta|}2}
	ds
	\sum_{h=1}^n \int_{\mathbb{R}^n}
		(-1)^l (-y)^\beta \mathcal{I}_{hj;n+5} (1,y)
	dy\\
	&-
	\sum_{2l+|\beta|=n+3} \frac{\partial_t^l \nabla^\beta G(t)}{l!\beta!} \sum_{h=1}^n \int_0^\infty \int_{\mathbb{R}^n}
		(-s)^l (-y)^\beta \biggl( \omega_{hj} u_h (s,y) - \sum_{p=3}^4 \mathcal{I}_{hj;n+p} (s,y) - \mathcal{I}_{hj;n+5} (1+s,y) \biggr)
	dyds\\
	&-
	\int_0^t \int_{\mathbb{R}^n}
		\biggl( G(t-s,x-y) - \sum_{2l+|\beta|=0}^{n+3} \frac{\partial_t^l \nabla^\beta G(t,x)}{l!\beta!} (-s)^l (-y)^\beta \biggr)
		\sum_{h=1}^n \mathcal{I}_{hj;n+5} (s,y)
	dyds\\
	&+
	\sum_{2l+|\beta|=n+1} \frac{t^{-1}\partial_t^l \nabla^\beta G(t)}{l!\beta!} \sum_{h=1}^n \int_{\mathbb{R}^n}
		(-1)^l (-y)^\beta \mathcal{I}_{hj;n+5} (1,y)
	dy
	+
	\tilde{V}_{j;n+3} (t)
	+
	r_{2,n+3} (t),
\end{split}
\]
where
\[
\begin{split}
	&r_{2,n+3} (t)
	=
	\sum_{2l+|\beta|=n+3} \frac{\partial_t^l \nabla^\beta G(t)}{l!\beta!} \sum_{h=1}^n \int_t^\infty \int_{\mathbb{R}^n}
		(-s)^l (-y)^\beta \bigl( \omega_{hj} u_h (s,y) - \sum_{p=3}^4 \mathcal{I}_{hj;n+p} (s,y)\\
		&\hspace{35mm} - \mathcal{I}_{hj;n+5} (1+s,y) \bigr) (s,y)
	dyds\\
	&-
	\int_0^t \int_{\mathbb{R}^n}
		\biggl( G(t-s,x-y) - \sum_{2l+|\beta|=0}^{n+3} \frac{\partial_t^l \nabla^\beta G(t,x)}{l!\beta!} (-s)^l (-y)^\beta \biggr)
		\sum_{h=1}^n \biggl( \omega_{hj} u_h - \sum_{p=3}^5 \mathcal{I}_{hj;n+p} \biggr) (s,y)
	dyds\\
	&+ 
	\sum_{2l+|\beta|=n+1}^{n+2} \frac{\partial_t^l \nabla^\beta G(t)}{l!\beta!} \sum_{h=1}^n \int_t^\infty \int_{\mathbb{R}^n}
		(-s)^l (-y)^\beta \biggl( \omega_{hj} u_h - \sum_{p=3}^5 \mathcal{I}_{hj;n+p} \biggr) (s,y)
	dyds\\
	&-
	\sum_{2l+|\beta|=n+2} \frac{\partial_t^l \nabla^\beta G(t)}{l!\beta!} \int_t^\infty s^l \left( (1+s)^{-\frac{n}2-2+\frac{|\beta|}2} - s^{-\frac{n}2-2+\frac{|\beta|}2} \right) ds
	\sum_{h=1}^n \int_{\mathbb{R}^n}
		(-1)^l (-y)^\beta \mathcal{I}_{hj;n+4} (1,y)
	dy.
\end{split}
\]
Similarly,
\[
\begin{split}
	&r_{1,n+2} (t)\\
	=&
	- \sum_{2l+|\beta| = n+3} \sum_{k,h=1}^n \frac{\partial_t^l \nabla^\beta R_k R_j G(t)}{l!\beta!}
	\int_0^t
		s^l (1+s)^{-\frac{n}2 - \frac52 + \frac{|\beta|}2}
	ds
	\int_{\mathbb{R}^n}
		(-1)^l (-y)^\beta \mathcal{I}_{hk;n+5} (1,y)
	dy
	+
	U_{j;n+3}^S (t)\\
	&-
	\sum_{k,h=1}^n \int_0^t \int_{\mathbb{R}^n}
		\biggl( R_k R_j G(t-s,x-y) - \sum_{2l+|\beta|=0}^{n+3} \frac{\partial_t^l \nabla^\beta R_k R_j G(t,x)}{l!\beta!} (-s)^l (-y)^\beta \biggr)
		\mathcal{I}_{hk;n+5} (s,y)
	dyds\\
	&+ r_{1,n+3} (t),
\end{split}
\]
where
\[
\begin{split}
	&r_{1,n+3} (t)
	=
	- \sum_{k,h=1}^n \int_0^t \int_{\mathbb{R}^n}
		\Bigl( R_k R_j G(t-s,x-y) - \sum_{2l+|\beta|=0}^{n+3} \frac{\partial_t^l \nabla^\beta R_k R_j G(t,x)}{l!\beta!} (-s)^l (-y)^\beta \Bigr)\\
		&\hspace{15mm}\times
		\Bigl( \omega_{hk} u_h - \sum_{p=3}^5 \mathcal{I}_{hk;n+p} \Bigr) (s,y)
	dyds.
\end{split}
\]
At last, from \eqref{int-I},
\[
\begin{split}
	r_{3, n+2} (t)
	=&
	\sum_{2l+|\beta|=1}^n \frac{2 t^{-\frac{n}2 - \frac32 + l + \frac{|\beta|}2}}{n+3 - 2l - |\beta|} \frac{\partial_t^l \nabla^\beta G(t)}{l!\beta!} \sum_{h=1}^n \int_{\mathbb{R}^n}
		(-1)^l (-y)^\beta \mathcal{I}_{hj,n+5} (1,y)
	dy
	+ r_{3,n+3} (t),
\end{split}
\]
where
\[
\begin{split}
	r_{3,n+3} (t)
	=&
	\sum_{2l + |\beta| = 1}^n \frac{\partial_t^l \nabla^\beta G(t)}{l!\beta!} \int_t^\infty \int_{\mathbb{R}^n}
		(-s)^l (-y)^\beta \biggl( \omega_{hj} u_h - \sum_{p=3}^5 \mathcal{I}_{hj,n+p} \biggr) (s,y)
	dyds.
\end{split}
\]
Thus
\[
\begin{split}
	r_{0,n+2} + \cdots + r_{3,n+2}
	=&
	K_{j;n+3} + U_{j;n+3} + U_{j;n+3}^S + V_{j;n+3} + J_{j;n+3} + \tilde{V}_{j;n+3}
	+ r_{0,n+3} + \cdots + r_{3,n+3}.
\end{split}
\]
Generally, for $1 \le m \le n$, let
\[
\begin{split}
	r_{0,n+m} (t)
	=&
	- \sum_{k=1}^n R_k (-\Delta)^{-1/2} G(t) * \omega_{0kj}
	+
	\sum_{|\alpha|=0}^{n+m+1} \sum_{k=1}^n \frac{\nabla^\alpha R_k (-\Delta)^{-1/2} G(t)}{\alpha!} \int_{\mathbb{R}^n} (-y)^\alpha \omega_{0kj} (y) dy,
\end{split}
\]
\[
\begin{split}
	r_{1,n+m} (t)
	=&
	- \sum_{k,h=1}^n \int_0^t \int_{\mathbb{R}^n}
		\biggl( R_k R_j G(t-s,x-y) - \sum_{2l+|\beta|=0}^{n+m} \frac{\partial_t^l \nabla^\beta R_k R_j G(t,x)}{l!\beta!} (-s)^l (-y)^\beta \biggr)\\
		&\hspace{15mm}\times
		\biggl( \omega_{hk} u_h - \sum_{p=3}^{m+2} \mathcal{I}_{hk;n+p} \biggr) (s,y)
	dyds,\\
	r_{2,n+m} (t)
	=&
	\sum_{2l+|\beta| = n+m} \frac{\partial_t^l \nabla^\beta G(t)}{l!\beta!} \sum_{h=1}^n \int_t^\infty \int_{\mathbb{R}^n}
		(-s)^l (-y)^\beta \bigl( \omega_{hj} u_h (s,y) - \sum_{p=3}^{m+1} \mathcal{I}_{hj;n+p} (s,y)\\
		&\hspace{15mm}-\mathcal{I}_{hj;n+m+2} (1+s,y) \bigr)
	dyds\\
	&-
	\int_0^t \int_{\mathbb{R}^n}
		\biggl( G(t-s,x-y) - \sum_{2l+|\beta|=0}^{n+m} \frac{\partial_t^l \nabla^\beta G(t,x)}{l!\beta!} (-s)^l (-y)^\beta \biggr)\\
		&\hspace{5mm}\times
		\sum_{h=1}^n \biggl( \omega_{hj} u_h - \sum_{p=3}^{m+2} \mathcal{I}_{hj;n+p} \biggr) (s,y)
	dyds\\
	&+
	\sum_{2l+|\beta|=n+1}^{n+m-1} \frac{\partial_t^l \nabla^\beta G(t)}{l!\beta!} \sum_{h=1}^n \int_t^\infty \int_{\mathbb{R}^n}
		(-s)^l (-y)^\beta \biggl( \omega_{hj} u_h - \sum_{p=3}^{m+2} \mathcal{I}_{hj;n+p} \biggr) (s,y)
	dyds\\
	&-
	\sum_{2l+|\beta|=n+m-1} \frac{\partial_t^l \nabla^\beta G(t)}{l!\beta!}
	\int_t^\infty s^l \bigl( (1+s)^{-\frac{n}2 - \frac{m}2 - \frac12 + \frac{|\beta|}2} - s^{-\frac{n}2 - \frac{m}2 - \frac12 + \frac{|\beta|}2} \bigr) ds\\
	&\hspace{5mm}\times
	\sum_{h=1}^n \int_{\mathbb{R}^n} (-1)^l (-y)^\beta \mathcal{I}_{hj;n+m+1} (1,y) dy,\\
	r_{3,n+m} (t)
	=&
	\sum_{2l+|\beta|=1}^n \frac{\partial_t^l \nabla^\beta G(t)}{l!\beta!} \sum_{h=1}^n \int_t^\infty \int_{\mathbb{R}^n}
		(-s)^l (-y)^\beta \biggl( \omega_{hj} u_h - \sum_{p=3}^{m+2} \mathcal{I}_{hj;n+p} \biggr) (s,y)
	dyds,
\end{split}
\]
then, for $1 \le m \le n-1$,
\begin{equation}\label{kinou}
\begin{split}
	r_{0,n+m} + \cdots + r_{3,n+m}
	=&
	K_{j;n+m+1} + U_{j;n+m+1} + U_{j;n+m+1}^S + V_{j;n+m+1} + J_{j;n+3} + \tilde{V}_{j;n+m+1}\\
	&+ r_{0,n+m+1} + \cdots + r_{3,n+m+1}.
\end{split}
\end{equation}
We already confirmed it for $m = 1$ and $2$.
Inductively, for $3 \le m \le n-1$, we expand $r_{2,n+m}$, then, for the first and the second terms, we see from \eqref{int-I} that
\[
\begin{split}
	&\sum_{2l+|\beta| = n+m} \frac{\partial_t^l \nabla^\beta G(t)}{l!\beta!} \sum_{h=1}^n \int_t^\infty \int_{\mathbb{R}^n}
		(-s)^l (-y)^\beta \bigl( \omega_{hj} u_h (s,y) - \sum_{p=3}^{m+1} \mathcal{I}_{hj;n+p} (s,y)\\
		&\hspace{15mm}
		-\mathcal{I}_{hj;n+m+2} (1+s,y) \bigr)
	dyds\\
	=
	&\sum_{2l+|\beta|=n+m} \frac{\partial_t^l \nabla^\beta G(t)}{l!\beta!} \sum_{h=1}^n \int_t^\infty \int_{\mathbb{R}^n}
		(-s)^l (-y)^\beta \mathcal{I}_{hj;n+m+3} (s,y)
	dyds\\
	&-  \sum_{2l+|\beta|=n+m} \frac{\partial_t^l \nabla^\beta G(t)}{l!\beta!} \sum_{h=1}^n \int_t^\infty \int_{\mathbb{R}^n}
		(-s)^l (-y)^\beta \left( \mathcal{I}_{hj;n+m+2} (1+s,y) - \mathcal{I}_{hj;n+m+2} (s,y) \right)
	dyds\\
	&+
	\sum_{2l+|\beta|=n+m} \frac{\partial_t^l \nabla^\beta G(t)}{l!\beta!} \sum_{h=1}^n \int_t^\infty \int_{\mathbb{R}^n}
		(-s)^l (-y)^\beta \biggl( \omega_{hj} u_h - \sum_{p=3}^{m+3} \mathcal{I}_{hj;n+p} \biggr) (s,y)
	dyds\\
	=&
	\sum_{2l+|\beta|=n+m}
	\frac{2t^{-\frac12}\partial_t^l \nabla^\beta G (t)}{l!\beta!} \sum_{h=1}^n \int_{\mathbb{R}^n} (-1)^l (-y)^\beta \mathcal{I}_{hj;n+m+3} (1,y) dy
\end{split}
\]
\[
\begin{split}
	&-
	\sum_{2l+|\beta|=n+m} \frac{\partial_t^l \nabla^\beta G(t)}{l!\beta!}
	\int_t^\infty
		s^l \left( (1+s)^{-\frac{n}2 - \frac{m}2 -1 + \frac{|\beta|}2} - s^{-\frac{n}2 - \frac{m}2 -1 + \frac{|\beta|}2} \right)
	ds
	\sum_{h=1}^n \int_{\mathbb{R}^n}
		\mathcal{I}_{hj;n+m+2} (1,y)
	dy\\
	&+
	\sum_{2l+|\beta|=n+m} \frac{\partial_t^l \nabla^\beta G(t)}{l!\beta!} \sum_{h=1}^n \int_t^\infty \int_{\mathbb{R}^n}
		(-s)^l (-y)^\beta \biggl( \omega_{hj} u_h - \sum_{p=3}^{m+3} \mathcal{I}_{hj;n+p} \biggr) (s,y)
	dyds
\end{split}
\]
and
\[
\begin{split}
	&- \int_0^t \int_{\mathbb{R}^n}
		\biggl( G(t-s,x-y) - \sum_{2l+|\beta|=0}^{n+m} \frac{\partial_t^l \nabla^\beta G(t,x)}{l!\beta!} (-s)^l (-y)^\beta \biggr)
		\sum_{h=1}^n \biggl( \omega_{hj} u_h - \sum_{p=3}^{m+2} \mathcal{I}_{hj;n+p} \biggr) (s,y)
	dyds\\
	=&
	- \sum_{2l+|\beta|=n+m+1} \frac{\partial_t^l \nabla^\beta G(t)}{l!\beta!}
	\int_0^t
		s^l (1+s)^{-\frac{n}2 - \frac{m}2 - \frac32 + \frac{|\beta|}2}
	ds
	\sum_{h=1}^n \int_{\mathbb{R}^n}
		(-1)^l (-y)^\beta \mathcal{I}_{hj;n+m+3} (1,y)
	dy\\
	&-
	\sum_{2l+|\beta|=n+m+1} \frac{\partial_t^l \nabla^\beta G(t)}{l!\beta!} \sum_{h=1}^n \int_0^\infty \int_{\mathbb{R}^n}
		(-s)^l (-y)^\beta \bigl( \omega_{hj} u_h (s,y) - \sum_{p=3}^{m+2} \mathcal{I}_{hj;n+p} (s,y)\\
		&\hspace{35mm} - \mathcal{I}_{hj;n+m+3} (1+s,y) \bigr)
	dyds\\
	&-
	\int_0^t \int_{\mathbb{R}^n}
		\biggl( G(t-s,x-y) - \sum_{2l+|\beta|=0}^{n+m+1} \frac{\partial_t^l \nabla^\beta G(t,x)}{l!\beta!} (-s)^l (-y)^\beta \biggr)
		\sum_{h=1}^n \mathcal{I}_{hj;n+m+3} (s,y)
	dyds\\
	&-
	\int_0^t \int_{\mathbb{R}^n}
		\biggl( G(t-s,x-y) - \sum_{2l+|\beta|=0}^{n+m+1} \frac{\partial_t^l \nabla^\beta G(t,x)}{l!\beta!} (-s)^l (-y)^\beta \biggr)
		\sum_{h=1}^n \biggl( \omega_{hj} u_h - \sum_{p=3}^{m+3} \mathcal{I}_{hj;n+p} \biggr) (s,y)
	dyds\\
	&+
	\sum_{2l+|\beta| = n+m+1} \frac{\partial_t^l \nabla^\beta G(t)}{l!\beta!} \sum_{h=1}^n \int_t^\infty \int_{\mathbb{R}^n}
		(-s)^l (-y)^\beta \bigl( \omega_{hj} u_h (s,y) - \sum_{p=3}^{m+2} \mathcal{I}_{hj;n+p} (s,y)\\
		&\hspace{35mm} - \mathcal{I}_{hj;n+m+3} (1+s,y) \bigr)
	dyds.
\end{split}
\]
For the third term of $r_{2,n+m}$, we have that
\[
\begin{split}
	&\sum_{2l+|\beta|=n+1}^{n+m-1} \frac{\partial_t^l \nabla^\beta G(t)}{l!\beta!} \sum_{h=1}^n \int_t^\infty \int_{\mathbb{R}^n}
		(-s)^l (-y)^\beta \biggl( \omega_{hj} u_h - \sum_{p=3}^{m+2} \mathcal{I}_{hj;n+p} \biggr) (s,y)
	dyds\\
	=&
	\sum_{2l+|\beta|=n+1}^{n+m-1} \frac{\partial_t^l \nabla^\beta G(t)}{l!\beta!} \sum_{h=1}^n \int_t^\infty \int_{\mathbb{R}^n}
		(-s)^l (-y)^\beta \mathcal{I}_{hj;n+m+3} (s,y)
	dyds\\
	&+
	\sum_{2l+|\beta|=n+1}^{n+m-1} \frac{\partial_t^l \nabla^\beta G(t)}{l!\beta!} \sum_{h=1}^n \int_t^\infty \int_{\mathbb{R}^n}
		(-s)^l (-y)^\beta \biggl( \omega_{hj} u_h - \sum_{jp=3}^{m+3} \mathcal{I}_{hj;n+p} \biggr) (s,y)
	dyds\\
	=&
	\sum_{2l+|\beta|=n+1}^{n+m-1} \frac{2t^{-\frac{n}2 - \frac{m}2 - \frac12 + l + \frac{|\beta|}2}}{n+m+1-2l-|\beta|} \frac{\partial_t^l \nabla^\beta G(t)}{l!\beta!} \sum_{h=1}^n \int_{\mathbb{R}^n}
		(-1)^l (-y)^\beta \mathcal{I}_{hj;n+m+3} (1,y)
	dy\\
	&+
	\sum_{2l+|\beta|=n+1}^{n+m-1} \frac{\partial_t^l \nabla^\beta G(t)}{l!\beta!} \sum_{h=1}^n \int_t^\infty \int_{\mathbb{R}^n}
		(-s)^l (-y)^\beta \biggl( \omega_{hj} u_h - \sum_{jp=3}^{m+3} \mathcal{I}_{hj;n+p} \biggr) (s,y)
	dyds.
\end{split}
\]
The last term of $r_{2,n+m}$ is $\tilde{V}_{j;n+m+1}$.
The other terms $r_{1,n+m}$ and $r_{3,n+m}$ are expanded as
\[
\begin{split}
	&r_{1,n+m} (t)\\
	=&
	- \sum_{2l+|\beta|=n+m+1} \sum_{k,h=1}^n \frac{\partial_t^l \nabla^\beta R_k R_j G(t)}{l!\beta!}
	\int_0^t
		s^l (1+s)^{-\frac{n}2 - \frac{m}2 - \frac32 + \frac{|\beta|}2}
	ds
	\int_{\mathbb{R}^n}
		(-1)^l (-y)^\beta \mathcal{I}_{hk;n+m+3} (1,y)
	dy\\
	&+ U_{j;n+m+1}^S (t)
\end{split}
\]
\[
\begin{split}
	&-
	\sum_{k,h=1}^n \int_0^t \int_{\mathbb{R}^n}
		\biggl( R_k R_j G(t-s,x-y) - \sum_{2l+|\beta|=0}^{n+m+1} \frac{\partial_t^l \nabla^\beta R_k R_j G(t,x)}{l!\beta!} (-s)^l (-y)^\beta \biggr)
		\mathcal{I}_{hk;n+m+3} (s,y)
	dyds\\
	&+
	r_{1,n+m+1} (t)
\end{split}
\]
and
\[
\begin{split}
	r_{3,n+m} (t)
	=&
	\sum_{2l+|\beta|=1}^n \frac{2t^{-\frac{n}2-\frac{m}2-\frac12+l+\frac{|\beta|}2}}{n+m+1-2l-|\beta|} \frac{\partial_t^l \nabla^\beta G(t)}{l!\beta!} \sum_{h=1}^n \int_{\mathbb{R}^n}
		(-1)^l (-y)^\beta \mathcal{I}_{hj;n+m+3} (1,y)
	dy
	+ r_{3,n+m+1} (t),
\end{split}
\]
respectively.
Hence we conclude \eqref{kinou} and
\[
\begin{split}
	u_j (t)
	=&
	\sum_{k=1}^{n+m} \left( U_{j,k} + U_{j,k}^S \right) (t) + \sum_{k=1}^m \left( K_{j,n+k} + V_{j,n+k} + J_{j,n+k} \right) (t) + \sum_{k=3}^m \tilde{V}_{j,n+k} (t)\\
	&+ r_{0,n+m} (t) + \cdots + r_{3,n+m} (t)
\end{split}
\]
for $1 \le m \le n$.
Next, we show that
\begin{equation}\label{est-wtr}
	\| |x|^\mu r_{0,n+m} (t) \|_{L^q (\mathbb{R}^n)} + \cdots +\| |x|^\mu r_{3,n+m} (t) \|_{L^q (\mathbb{R}^n)}
	= o (t^{-\frac{n}2 (1-\frac1q) - \frac{n}2 - \frac{m}2 + \frac\mu{2}})
\end{equation}
as $t \to +\infty$ for $q = 1$ and $0 \le \mu \le n+m-1$, and for $1 < q \le \infty$ and $0 \le \mu \le n+m$.
For some positive function $R = R(t)$ with $R(t) = o (t^{1/2})$ as $t \to +\infty$, $r_{0,n+m}$ is split as
\[
\begin{split}
	r_{0,n+m} (t)
	=&
	-\sum_{|\alpha| = n + m + 2} \sum_{k=1}^n \int_{|y| \le \min (|x|/2,R(t))} \int_0^1
		\frac{\nabla^\alpha R_k (-\Delta)^{-1/2} G(t,x-\lambda y)}{\alpha!} \lambda^{n+m+1} (-y)^\alpha \omega_{0kj} (y)
	d\lambda dy\\
	&-\sum_{|\alpha| = n + m + 1} \sum_{k=1}^n \int_{R(t) < |y| \le |x|/2} \int_0^1
		\frac{\nabla^\alpha R_k (-\Delta)^{-1/2} G(t,x-\lambda y)}{\alpha!} \lambda^{n+m} (-y)^\alpha \omega_{0kj} (y)
	d\lambda dy\\
	&+\sum_{|\alpha|=n+m+1} \sum_{k=1}^n \frac{\nabla^\alpha R_k (-\Delta)^{-1/2} G(t,x)}{\alpha!} \int_{R(t) < |y| \le |x|/2} (-y)^\alpha \omega_{0kj} (y) dy\\
	&- \sum_{|\alpha| = 2} \sum_{k=1}^n \int_{|y| \ge |x|/2} \int_0^1 \frac{\nabla^\alpha R_k (-\Delta)^{-1/2} G(t,x-\lambda y)}{\alpha!} \lambda (-y)^\alpha \omega_{0kj} (y) d\lambda dy\\
	&+
	\sum_{|\alpha|=2}^{n+m+1} \sum_{k=1}^n \frac{\nabla^\alpha R_k (-\Delta)^{-1/2} G(t)}{\alpha!} \int_{|y| \ge |x|/2}
		(-y)^\alpha \omega_{0kj} (y)
	dy.
\end{split}
\]
Thus
\[
\begin{split}
	\bigl\| |x|^\mu r_{0,n+m} (t) \bigr\|_{L^1 (\mathbb{R}^n)}
	\le&
	C R(t) \sum_{|\alpha| = n+m+2} \sum_{k=1}^n \bigl\| |x|^\mu \nabla^\alpha R_k (-\Delta)^{-1/2} G (t) \bigr\|_{L^1 (\mathbb{R}^n)} \bigl\| |x|^{n+m+1} \omega_{0kj} \bigr\|_{L^1 (\mathbb{R}^n)}\\
	&+
	C \sum_{|\alpha|=n+m+1} \sum_{k=1}^n \bigl\| |x|^\mu \nabla^\alpha R_k (-\Delta)^{-1/2} G(t) \bigr\|_{L^1 (\mathbb{R}^n)} \int_{|y| > R(t)} \bigl| (-y)^\alpha \omega_{0kj} (y) \bigr| dy\\
	&+
	C \sum_{|\alpha|=2}^{n+m+1} \sum_{k=1}^n \bigl\| |x|^{|\alpha|-2} \nabla^\alpha R_k (-\Delta)^{-1/2} G(t) \bigr\|_{L^1 (\mathbb{R}^n)} \bigl\| |x|^{\mu+2-|\alpha|} (-x)^\alpha \omega_{0kj} \bigr\|_{L^1 (\mathbb{R}^n)}
\end{split}
\]
for $0 \le \mu \le n+m-1$.
Similarly, since
\[
\begin{split}
	r_{0,n+m} (t)
	=&
	-\sum_{|\alpha| = n + m + 2} \sum_{k=1}^n \int_{|y| \le \min (|x|/2,R(t))} \int_0^1
		\frac{\nabla^\alpha R_k (-\Delta)^{-1/2} G(t,x-\lambda y)}{\alpha!} \lambda^{n+m+1} (-y)^\alpha \omega_{0kj} (y)
	d\lambda dy\\
	&- \sum_{|\alpha| = n+m+1} \sum_{k=1}^n \int_{R(t) < |y| \le |x|/2} \int_0^1 \frac{\nabla^\alpha R_k (-\Delta)^{-1/2} G(t,x-\lambda y)}{\alpha!} \lambda^{n+m} (-y)^\alpha \omega_{0kj} (y) d\lambda dy\\
	&+
	\sum_{|\alpha|= n+m+1} \sum_{k=1}^n \frac{\nabla^\alpha R_k (-\Delta)^{-1/2} G(t)}{\alpha!} \int_{R(t) < |y| \le |x|/2}
		(-y)^\alpha \omega_{0kj} (y)
	dy\\
	&- \sum_{|\alpha| = 1} \sum_{k=1}^n \int_{|y| \ge |x|/2} \int_0^1 \frac{\nabla^\alpha R_k (-\Delta)^{-1/2} G(t,x-\lambda y)}{\alpha!} (-y)^\alpha \omega_{0kj} (y) d\lambda dy\\
	&+
	\sum_{|\alpha|=1}^{n+m+1} \sum_{k=1}^n \frac{\nabla^\alpha R_k (-\Delta)^{-1/2} G(t)}{\alpha!} \int_{|y| \ge |x|/2}
		(-y)^\alpha \omega_{0kj} (y)
	dy,
\end{split}
\]
we have for $1 < q \le \infty$ and $0 \le \mu \le n+m$ that
\[
\begin{split}
	\bigl\| |x|^\mu r_{0,n+m} (t) \bigr\|_{L^q (\mathbb{R}^n)}
	\le&
	C R(t) \sum_{|\alpha| = n+m+2} \sum_{k=1}^n \bigl\| |x|^\mu \nabla^\alpha R_k (-\Delta)^{-1/2} G (t) \bigr\|_{L^q (\mathbb{R}^n)} \bigl\| |x|^{n+m+1} \omega_{0kj} \bigr\|_{L^1 (\mathbb{R}^n)}\\
	&+
	C \sum_{|\alpha|=n+m+1} \sum_{k=1}^n \bigl\| |x|^\mu \nabla^\alpha R_k (-\Delta)^{-1/2} G(t) \bigr\|_{L^q (\mathbb{R}^n)} \int_{|y| > R(t)} \bigl| |y|^{n+m+1} \omega_{0kj} (y) \bigr| dy\\
	&+
	C \sum_{|\alpha|=1}^{n+m+1} \sum_{k=1}^n \bigl\| |x|^{|\alpha|-1} \nabla^\alpha R_k (-\Delta)^{-1/2} G(t) \bigr\|_{L^q (\mathbb{R}^n)} \bigl\| |x|^{\mu+1-|\alpha|} (-x)^\alpha \omega_{0kj} \bigr\|_{L^1 (\mathbb{R}^n)}.
\end{split}
\]
Therefore $\| |x|^\mu r_{0,n+m} \|_{L^q (\mathbb{R}^n)} = o (t^{-\frac{n}2 (1-\frac1q)-\frac{n}2-\frac{m}2+\frac\mu{2}})$ as $t \to +\infty$ for $q = 1$ and $0 \le \mu \le n+m-1$, and for $1 < q \le \infty$ and $0 \le \mu \le n+m$.
Next we derive \eqref{est-wtr} for $r_{1,n+m},\ldots,r_{3,n+m}$.
We show this for large $\mu$ for a start.
We employ $Q_1,\ldots,Q_5$ defined by \eqref{Q}, then $r_{1,n+m} = r_{1,1,n+m} + \cdots + r_{1,5,n+m}$, where
\[
	r_{1,i,n+m} (t)
	=
	\left\{
	\begin{array}{r}
	\displaystyle
	- \sum_{k,h=1}^n \iint_{Q_i}
		\biggl( R_k R_j G(t-s,x-y) - \sum_{2l = 0}^{n+m} \frac{\partial_t^l R_k R_j G (t,x-y)}{l!} (-s)^l \biggr)\hspace{10mm}\\
		\displaystyle
		\times
		\biggl( \omega_{hk} u_h - \sum_{p=3}^{m+2} \mathcal{I}_{hk;n+p} \biggr) (s,y)
	dyds,\quad
	i = 1,2,3,\\
	\displaystyle
	- \sum_{2l=0}^{n+m} \sum_{k,h=1}^n \iint_{Q_i}
		\biggl( \frac{\partial_t^l R_k R_j G (t,x-y)}{l!} - \sum_{|\beta| = 0}^{n+m-2l} \frac{\partial_t^l \nabla^\beta R_k R_j G(t,x)}{l!\beta!} (-y)^\beta \biggr)\\
		\displaystyle
		\times
		(-s)^l \biggl( \omega_{hk} u_h - \sum_{p=3}^{m+2} \mathcal{I}_{hk;n+p} \biggr) (s,y)
	dyds,\quad
	i = 4,5.
	\end{array}
	\right.
\]
Let $N = \max \{ l \in \mathbb{Z}_+~ |~ 2l \le n+m \} + 1$, then
\[
\begin{split}
	r_{1,1,n+m} (t)
	=&
	- \sum_{k,h=1}^n \int_0^{t/2} \int_{|y| \ge |x|/2} \int_0^1
		\frac{\partial_t^N R_k R_j G (t-\lambda s,x-y)}{N!} \lambda^{N-1}\\
		&\hspace{5mm}\times
		(-s)^N \biggl( \omega_{hk} u_h - \sum_{p=3}^{m+2} \mathcal{I}_{hk;n+p} \biggr) (s,y)
	d\lambda dyds.
\end{split}
\]
Thus, by \eqref{bilin-asymp-wt},
\[
\begin{split}
	&\left\| |x|^\mu r_{1,1,n+m} (t) \right\|_{L^q (\mathbb{R}^n)}\\
	\le&
	C \sum_{k,h=1}^n \int_0^{t/2} \int_0^1
		\bigl\| \partial_t^N R_k R_j G (t-\lambda s) \bigr\|_{L^q (\mathbb{R}^n)} \lambda^{N-1}
		s^N \biggl\| |x|^\mu \biggl( \omega_{hk} u_h - \sum_{p=3}^{m+2} \mathcal{I}_{hk;n+p} \biggr) (s) \biggr\|_{L^1 (\mathbb{R}^n)}
	d\lambda ds\\
	\le&
	C \int_0^{t/2} \int_0^1
		(t-\lambda s)^{-\frac{n}2 (1-\frac1q) - N}
		\lambda^{N-1}
		s^{-\frac{n}2 - \frac{m}2 - 1 + \frac{\mu}2 + N} (1+s)^{-\frac12} L_m (s)
	d\lambda ds\\
	\le&
	C t^{-\frac{n}2 (1-\frac1q) - \frac{n}2 - \frac{m}2 - \frac12 + \frac\mu{2}} L_m (t).
\end{split}
\]
Similarly,
\[
\begin{split}
	r_{1,2,n+m} (t)
	=&
	- \sum_{k,h=1}^n \int_0^t \int_{|y| \le |x|/2} \int_0^1
		\frac{\partial_t^N R_k R_j G(t-\lambda s,x-y)}{N!} \lambda^{N-1}\\
		&\hspace{15mm}\times
		(-s)^N \biggl( \omega_{hk} u_h - \sum_{p=3}^{m+2} \mathcal{I}_{hk;n+p} \biggr) (s,y)
	d\lambda dyds.
\end{split}
\]
Since $2N-4 < \mu < 2N$ when $q=1$ and $n+m-2 < \mu \le n+m-1$,
\[
\begin{split}
	&\bigl\| |x|^\mu r_{1,2,n+m} (t) \bigr\|_{L^1 (\mathbb{R}^n)}\\
	\le&
	C \sum_{k,h=1}^n \int_0^t \int_0^1
		\bigl\| |x|^\mu \partial_t^N R_k R_j G(t-\lambda s) \bigr\|_{L^1 (\mathbb{R}^n)}
		\lambda^{N-1} s^N \biggl\| \biggl( \omega_{hk} u_h - \sum_{p=3}^{m+2} \mathcal{I}_{hk;n+p} \biggr) (s) \biggr\|_{L^1 (\mathbb{R}^n)}
	d\lambda ds\\
	\le&
	C \int_0^t \int_0^1
		(t-\lambda s)^{- N + \frac\mu{2}}
		\lambda^{N-1} s^{- \frac{n}2 - \frac{m}2 - 1 +N} (1+s)^{-\frac12} L_m (s)
	d\lambda ds
	\le
	C t^{- \frac{n}2 - \frac{m}2 - \frac12 + \frac\mu{2}} L_m (t).
\end{split}
\]
Here we remark that, since $-N+\frac\mu{2} > -2$,  $(t-\lambda s)^{-N+\frac\mu{2}}$ is integrable in $(s,\lambda) \in (0,t) \times (0,1)$.
Indeed, for $a > -2$,
\[
	\int_{t/2}^t \int_0^1
		(t-\lambda s)^a
	d\lambda ds
	=
	t^{1+a} \int_{1/2}^1 \int_0^1
		(1-\lambda s)^a
	d\lambda ds.
\]
When $a \neq -1$,
\[
\begin{split}
	\int_{1/2}^1 \int_0^1
		(1-\lambda s)^a
	d\lambda ds
	=&
	\int_{1/2}^1 \frac1{s} \int_{1-s}^1 \lambda^a d\lambda ds
	=
	\frac1{1+a} \int_{1/2}^1 \frac1s \left( 1 - (1-s)^{1+a} \right) ds\\
	=&
	\frac1{1+a} \biggl( \log2 - \int_{1/2}^1 \frac1s (1-s)^{1+a} ds \biggr).
\end{split}
\]
The last term satisfies
\[
	\biggl| \int_{1/2}^1 \frac1s (1-s)^{1+a} ds \biggr|
	\le
	-2 \int_{1/2}^1 (1-s)^{1+a} ds
	=
	\frac2{2+a} \left[ (1-s)^{2+a} \right]_{1/2}^1
	=
	\frac2{2+a} \left( 1 - \frac1{2^{2+a}} \right).
\]
When $a = -1$,
\[
\begin{split}
	\int_{1/2}^1 \int_0^1 (1-\lambda s)^{-1} d\lambda ds
	=
	\int_{1/2}^1 \frac1{s} \int_{1-s}^1 \frac{d\lambda}{\lambda} ds
	=
	- \int_{1/2}^1 \frac{\log (1-s)}s ds
	\le
	-2 \int_{1/2}^1 \log (1-s) ds
	=
	\log 2 + 1.
\end{split}
\]
For $1 < q \le \infty$, we choose some $q_1$ and $q_2$ with $1 + \frac1q = \frac1{q_1} + \frac1{q_2}$ and $1 < q_1 < \frac{n}{n-1}$, then for $n+m-1 \le \mu \le n+m$, we see that
\[
\begin{split}
	&\bigl\| |x|^\mu r_{1,2,n+m} (t) \bigr\|_{L^q (\mathbb{R}^n)}\\
	\le&
	C \sum_{k,h=1}^n \int_0^{t/2} \int_0^1
		\bigl\| |x|^\mu \partial_t^N R_k R_j G(t-\lambda s) \bigr\|_{L^q (\mathbb{R}^n)}
		\lambda^{N-1} s^N \biggl\| \biggl( \omega_{hk} u_h - \sum_{p=3}^{m+2} \mathcal{I}_{hk;n+p} \biggr) (s) \biggr\|_{L^1 (\mathbb{R}^n)}
	d\lambda ds\\
	&+
	C \sum_{k,h=1}^n \int_{t/2}^t \int_0^1
		\bigl\| |x|^\mu \partial_t^N R_k R_j G(t-\lambda s) \bigr\|_{L^{q_1} (\mathbb{R}^n)}
		\lambda^{N-1} s^N \biggl\| \biggl( \omega_{hk} u_h - \sum_{p=3}^{m+2} \mathcal{I}_{hk;n+p} \biggr) (s) \biggr\|_{L^{q_2} (\mathbb{R}^n)}
	d\lambda ds\\
	\le&
	C \int_0^{t/2} \int_0^1
		(t-\lambda s)^{ - \frac{n}2 (1-\frac1q) -N + \frac\mu{2}}
		\lambda^{N-1} s^{- \frac{n}2 - \frac{m}2 - 1 +N} (1+s)^{-\frac12} L_m (s)
	d\lambda ds\\
	&+
	C \int_{t/2}^t \int_0^1
		(t-\lambda s)^{-\frac{n}2 (1 -\frac1{q_1}) - N + \frac\mu{2}}
		\lambda^{N-1} s^{-\frac{n}2 (1-\frac1{q_2}) - \frac{n}2 - \frac{m}2 - 1 +N} (1+s)^{-\frac12} L_m (s) d\lambda ds\\
	\le&
	C t^{-\frac{n}2 (1-\frac1q) - \frac{n}2 - \frac{m}2 - \frac12 + \frac\mu{2}} L_m (t).
\end{split}
\]
Here we can choose $q_1$ such that $-\frac{n}2 (1 -\frac1{q_1}) - N + \frac\mu{2} > -2$, thus $(t-\lambda s)^{-\frac{n}2 (1 -\frac1{q_1}) - N + \frac\mu{2}}$ is integrable in $(s,\lambda) \in (t/2,t) \times (0,1)$.
Moreover
\[
\begin{split}
	r_{1,3,n+m} (t)
	=&
	- \sum_{k,h=1}^n \int_{t/2}^t \int_{|y| > |x|/2}
		\biggl( \int_0^1 \partial_t R_k R_j G(t-\lambda s,x-y) d\lambda (-s) - \sum_{2l=2}^{n+m} \frac{\partial_t^l R_k R_j G(t,x-y)}{l!} (-s)^l \biggr)\\
		&\hspace{15mm}\times
		\biggl( \omega_{hk} u_h - \sum_{p=3}^{m+2} \mathcal{I}_{hk;n+p} \biggr) (s,y)
	dyds.
\end{split}
\]
Hence
\[
\begin{split}
	&\bigl\| |x|^\mu r_{1,3,n+m} (t) \bigr\|_{L^1 (\mathbb{R}^n)}\\
	\le&
	C \int_{t/2}^t \biggl(
		s \int_0^1 \left\| \partial_t R_k R_j G(t-\lambda s) \right\|_{L^1 (\mathbb{R}^n)} d\lambda
		+
		\sum_{2l=2}^{n+m} s^l \bigl\| \partial_t^l R_k R_j G(t) \bigr\|_{L^1 (\mathbb{R}^n)}
	\biggr)\\
	&\hspace{15mm}\times
	\biggl\| |x|^\mu \biggl( \omega_{hk} u_h - \sum_{p=3}^{m+2} \mathcal{I}_{hk;n+p} \biggr) (s) \biggr\|_{L^1 (\mathbb{R}^n)}
	ds\\
	\le&
	C \int_{t/2}^t
		\biggl( s \int_0^1 (t-\lambda s)^{-1} d\lambda  + \sum_{2l=2}^n s^l t^{-l} \biggr)
		s^{-\frac{n}2 - \frac{m}2 - 1 + \frac\mu{2}} (1+s)^{-\frac12} L_m (s)
	ds
	=
	o \bigl( t^{- \frac{n}2 - \frac{m}2 + \frac\mu{2}} \bigr)
\end{split}
\]
as $t \to +\infty$.
For $1 < q \le \infty$, we choose $q_1$ and $q_2$ such that  $1+\frac1q = \frac1{q_1} + \frac1{q_2}$ and $1 - \frac2{n} < \frac1{q_1} < 1$, then
\[
\begin{split}
	&\bigl\| |x|^\mu r_{1,3,n+m} (t) \bigr\|_{L^q (\mathbb{R}^n)}
	\le
	\int_{t/2}^t
		\bigl\| R_k R_j G (t-s) \bigr\|_{L^{q_1} (\mathbb{R}^n)}
		\biggl\| |x|^\mu \biggl( \omega_{hk} u_h - \sum_{p=3}^{m+2} \mathcal{I}_{hk;n+p} \biggr) (s) \biggr\|_{L^{q_2} (\mathbb{R}^n)}
	ds\\
	&+
	C \sum_{2l=0}^{n+m} \bigl\| \partial_t^l R_k R_j G(t) \bigr\|_{L^q (\mathbb{R}^n)}
	\int_{t/2}^t s^l \biggl\| |x|^\mu  \biggl( \omega_{hk} u_h - \sum_{p=3}^{m+2} \mathcal{I}_{hk;n+p} \biggr) (s) \biggr\|_{L^1 (\mathbb{R}^n)} ds\\
	\le&
	C \int_{t/2}^t
		(t-s)^{-\frac{n}2 (1-\frac1{q_1})}
		s^{-\frac{n}2 (1-\frac1{q_2}) - \frac{n}2 - \frac{m}2-1+\frac\mu{2}} (1+s)^{-\frac12} L_m (s)
	ds\\
	&+
	C \sum_{2l=0}^{n+m} t^{-\frac{n}2(1-\frac1q) - l} \int_{t/2}^t s^{-\frac{n}2 - \frac{m}2 -1 +l + \frac\mu{2}} (1+s)^{-\frac12} L_m (s) ds\\
	\le&
	C t^{-\frac{n}2 (1-\frac1q) - \frac{n}2 -\frac{m}2+\frac\mu{2}} (1+t)^{-\frac12} L_m (t).
\end{split}
\]
From the Taylor theorem,
\[
\begin{split}
	r_{1,4,n+m} (t)
	=&
	- \sum_{2l = 0}^{n+m} \sum_{|\beta|=n+m-2l+1} \sum_{k,h=1}^n \int_0^t \int_{|y| \le |x|/2} \int_0^1
		\frac{\partial_t^l \nabla^\beta R_k R_j G(t,x-\lambda y)}{l!\beta!}\\
		&\hspace{5mm}\times
		(-s)^l (-y)^\beta \biggl( \omega_{hk} u_h - \sum_{p=3}^{m+2} \mathcal{I}_{hk;n+p} \biggr) (s,y)
	d\lambda dyds.
\end{split}
\]
Hence
\[
\begin{split}
	\bigl\| |x|^\mu r_{1,4,n+m} (t) \bigr\|_{L^q (\mathbb{R}^n)}
	\le&
	C \sum_{2l=0}^{n+m} \sum_{|\beta|=n+m-2l+1} \sum_{k,h=1}^n \int_0^t
		\bigl\| |x|^\mu \partial_t^l \nabla^\beta R_k R_j G(t) \bigr\|_{L^q (\mathbb{R}^n)}\\
		&\times
		s^l \biggl\| (-y)^\beta \biggl( \omega_{hk} u_h - \sum_{p=3}^{m+2} \mathcal{I}_{hk;n+p} \biggr) (s) \biggr\|_{L^1 (\mathbb{R}^n)}
	ds\\
	\le&
	C t^{-\frac{n}2 (1-\frac1q) - \frac{n}2 - \frac{m}2 - \frac12 + \frac\mu{2}} \int_0^t
		s^{-\frac12} (1+s)^{-\frac12} L_m (s)
	ds\\
	\le&
	C t^{-\frac{n}2 (1-\frac1q) - \frac{n}2 - \frac{m}2 - \frac12 + \frac\mu{2}} L_m (t) \log (2+t).
\end{split}
\]
The last term of $r_{1,n+m}$ is represented for $l_1 = 1$ and $2$ that
\begin{equation}\label{Taylor}
\begin{split}
	&r_{1,5,n+m} (t)\\
	&=
	- \sum_{k,h=1}^n \int_0^t \int_{|y| > |x|/2} \biggl(
		\sum_{|\beta|=l_1} \int_0^1
			\frac{\nabla^\beta R_k R_j G(t,x-\lambda y)}{\beta!} (-y)^\beta \lambda^{l-1}
		d\lambda
		-
		\sum_{|\beta|=l_1}^{n+m} \frac{\nabla^\beta R_k R_j G(t,x)}{\beta!} (-y)^\beta
		\biggr)\\
		&\hspace{5mm}\times
		\biggl( \omega_{hk} u_h - \sum_{p=3}^{m+2} \mathcal{I}_{hk;n+p} \biggr) (s,y)
	dyds\\
	&-
	\sum_{2l=2}^{n+m} \sum_{k,h=1}^n \int_0^t \int_{|y| > |x|/2}
		\biggl( \frac{\partial_t^l R_k R_j G (t,x-y)}{l!} - \sum_{|\beta| = 0}^{n+m-2l} \frac{\partial_t^l \nabla^\beta R_k R_j G(t,x)}{l!\beta!} (-y)^\beta \biggr)\\
		\displaystyle
		&\hspace{5mm}\times
		(-s)^l \biggl( \omega_{hk} u_h - \sum_{p=3}^{m+2} \mathcal{I}_{hk;n+p} \biggr) (s,y)
	dyds.
\end{split}
\end{equation}
We employ \eqref{Taylor} with $l_1 = 1$ for the case $1 < q \le \infty$ and $n+m-1 < \mu \le n+m$, then
\[
\begin{split}
	&\bigl\| |x|^\mu r_{1,5,n+m} (t) \bigr\|_{L^q (\mathbb{R}^n)}\\
	&\le
	C \sum_{|\beta|=1}^{n+m} \sum_{k,h=1}^n \int_0^t
		\bigl\| |x|^{|\beta|-1} \nabla^\beta R_k R_j G (t) \bigr\|_{L^q (\mathbb{R}^n)}
		\biggl\| |x|^{\mu+1} \biggl( \omega_{hk} u_h - \sum_{p=3}^{m+2} \mathcal{I}_{hk;n+p} \biggr) (s) \biggr\|_{L^1 (\mathbb{R}^n)}
	ds\\
	&+C \sum_{2l=2}^{n+m} \sum_{|\beta|=0}^{n+m-2l} \sum_{k,h=1}^n \int_0^t
		\bigl\| |x|^{|\beta|} \partial_t^l \nabla^\beta R_k R_j G (t) \bigr\|_{L^q (\mathbb{R}^n)}
		s^l \biggl\| |x|^\mu \biggl( \omega_{hk} u_h - \sum_{p=3}^{m+2} \mathcal{I}_{hk;n+p} \biggr) (s) \biggr\|_{L^1 (\mathbb{R}^n)}
	ds\\
	&\le
	C t^{-\frac{n}2 (1-\frac1q)-\frac12} \int_0^t
		s^{-\frac{n}2-\frac{m}2-\frac12+\frac\mu{2}} (1+s)^{-\frac12} L_m (s)
	ds\\
	&+
	C \sum_{2l=2}^{n+m} t^{-\frac{n}2 (1-\frac1q)-l} \int_0^t
		s^{-\frac{n}2-\frac{m}2 -1 + l + \frac\mu{2}} (1+s)^{-\frac12} L_m (s)
	ds
	=
	o \bigl( t^{-\frac{n}2 (1-\frac1q) - \frac{n}2 - \frac{m}2 + \frac\mu{2}} \bigr)
\end{split}
\]
as $t \to +\infty$.
For $n+m-2 < \mu \le n+m-1$, we use \eqref{Taylor} with $l_1 = 2$, then
\[
\begin{split}
	&\bigl\| |x|^\mu r_{1,5,n+m} (t) \bigr\|_{L^1 (\mathbb{R}^n)}\\
	&\le
	C \sum_{|\beta| =2}^{n+m} \sum_{k,h=1}^n \int_0^t
		\bigl\| |x|^{|\beta|-2} \nabla^\beta R_k R_j G(t) \bigr\|_{L^1 (\mathbb{R}^n)}
		\biggl\| |x|^{\mu+2} \biggl( \omega_{hk} u_h - \sum_{p=3}^{m+2} \mathcal{I}_{hk;n+p} \biggr) (s) \biggr\|_{L^1 (\mathbb{R}^n)}
	ds\\
	&+
	C \sum_{2l=2}^{n+m} \sum_{|\beta|=0}^{n+m-2l} \sum_{k,h=1}^n \int_0^t
		\bigl\| |x|^{|\beta|} \partial_t^l \nabla^\beta R_k R_j G(t) \bigr\|_{L^1 (\mathbb{R}^n)}
		s^l \biggl\| |x|^\mu \biggl( \omega_{hk} u_h - \sum_{p=3}^{m+2} \mathcal{I}_{hk;n+p} \biggr) (s) \biggr\|_{L^1 (\mathbb{R}^n)}
	ds\\
	&\le
	C t^{-1} \int_0^t s^{-\frac{n}2 - \frac{m}2 + \frac\mu{2}} (1+s)^{-\frac12} L_m (s) ds
	+
	C \sum_{2l=2}^{n+m} t^{-l} \int_0^t s^{-\frac{n}2 - \frac{m}2 - 1 + l + \frac\mu{2}} (1+s)^{-\frac12} L_m (s) ds\\
	&=
	o \bigl( t^{-\frac{n}2 - \frac{m}2 + \frac\mu{2}} \bigr)
\end{split}
\]
as $t \to +\infty$.
We estimate the second term of $r_{2,n+m}$ by the same way.
The treatment for the other terms of $r_{2,n+m}$ and $r_{3,n+m}$ is straightforward.
At the last we show \eqref{est-wtr} with $\mu = 0$.
The estimate for $r_{0,n+m}$ is already derived, and \eqref{bilin-asymp-wt} treats $r_{3,n+m}$.
For $N = \max \{ l \in \mathbb{Z}_+~ |~ 2l \le n+m \} + 1$,
\[
\begin{split}
	&r_{1,n+m} (t)\\
	=&
	- \sum_{k,h=1}^n \int_0^{t/2} \int_{\mathbb{R}^n} \int_0^1 \frac{\partial_t^N R_k R_j G(t-\lambda s,x-y)}{N!} \lambda^{N-1} d\lambda (-s)^N
	\biggl( \omega_{hk} u_h - \sum_{p=3}^{m+2} \mathcal{I}_{hk;n+p} \biggr) (s,y)
	dyds\\
	&-
	\sum_{2l=0}^{n+m} \sum_{|\beta| = n+m-2l+1} \sum_{k,h=1}^n \int_0^{t/2} \int_{\mathbb{R}^n}
		\frac{\partial_t^l \nabla^\beta R_k R_j G(t,x-\lambda y)}{l!\beta!} \lambda^{n+m-2l} d\lambda\\
	&\hspace{15mm}\times
		(-s)^l (-y)^\beta \biggl( \omega_{hk} u_h - \sum_{p=3}^{m+2} \mathcal{I}_{hk;n+p} \biggr) (s,y)
	dyds\\
	&-
	\int_{t/2}^t \int_{\mathbb{R}^n} \int_0^1
		(-y) \cdot \nabla R_k R_j G(t-s,x-\lambda y) d\lambda
		\biggl( \omega_{hk} u_h - \sum_{p=3}^{m+2} \mathcal{I}_{hk;n+p} \biggr) (s,y)
	dyds\\
	&+
	\sum_{2l+|\beta|=1}^{n+m} \frac{\partial_t^l \nabla^\beta R_k R_j G(t,x)}{l!\beta!}
	\int_{t/2}^t \int_{\mathbb{R}^n}
		(-s)^l (-y)^\beta
		\biggl( \omega_{hk} u_h - \sum_{p=3}^{m+2} \mathcal{I}_{hk;n+p} \biggr) (s,y)
	dyds.
\end{split}
\]
Hence
\[
\begin{split}
	\bigl\| r_{1,n+m} (t) \bigr\|_{L^q (\mathbb{R}^n)}
	\le&
	C \int_0^{t/2}
		(t-s)^{-\frac{n}2 (1-\frac1q)-N}
		s^{-\frac{n}2-\frac{m}2-1+N} (1+s)^{-\frac12} L_m (s)
	ds\\
	&+
	C t^{-\frac{n}2 (1-\frac1q) - \frac{n}2 - \frac{m}2 - \frac12}
	\int_0^{t/2}
		s^{-\frac12} (1+s)^{-\frac12} L_m (s)
	ds\\
	&+
	C \int_{t/2}^t
		(t-s)^{-\frac12}
		s^{-\frac{n}2 (1-\frac1q)-\frac{n}2 -\frac{m}2-\frac12} (1+s)^{-\frac12} L_m (s)
	ds\\
	&+
	C \sum_{2l+|\beta|=1}^{n+m} t^{-\frac{n}2 (1-\frac1q)-l-\frac{|\beta|}2}
	\int_{t/2}^t
		s^{-\frac{n}2-\frac{m}2-1+l+\frac{|\beta|}2} (1+s)^{-\frac12} L_m (s)
	ds\\
	\le&
	C t^{-\frac{n}2 (1-\frac1q) - \frac{n}2 - \frac{m}2 - \frac12} L_m (t) \log (2+t)
\end{split}
\]
for $1 \le q \le \infty$.
We apply the similar estimate to the second term of $r_{2,n+m}$, then \eqref{bilin-asymp-wt} gives that
\[
\begin{split}
	\bigl\| r_{2,n+m} (t) \bigr\|_{L^q (\mathbb{R})}
	\le&
	C t^{-\frac{n}2 (1-\frac1q)-\frac{n}2 - \frac{m}2} \int_t^\infty
		s^{-\frac12} (1+s)^{-1} L_m (s)
	ds
	+
	C t^{-\frac{n}2 (1-\frac1q) - \frac{n}2 - \frac{m}2 - \frac12} L_m (t) \log (2+t)\\
	&+
	C \sum_{2l+|\beta|=n+1}^{n+m-1} t^{-\frac{n}2 (1-\frac1q)-l-\frac{|\beta|}2} \int_t^\infty
		s^{-\frac{n}2-\frac{m}2-1+l+\frac{|\beta|}2} (1+s)^{-\frac12} L_m (s)
	ds\\
	&+
	C t^{-\frac{n}2 (1-\frac1q) - \frac{n}2 - \frac{m}2 + \frac12} \sum_{2l+|\beta|=n+m-1}
	\int_t^\infty s^l \left( (1+s)^{-\frac{n}2 - \frac{m}2 - \frac12 + \frac{|\beta|}2} - s^{-\frac{n}2 - \frac{m}2 - \frac12 + \frac{|\beta|}2} \right) ds\\
	\le&
	C t^{-\frac{n}2 (1-\frac1q) - \frac{n}2 - \frac{m}2 - \frac12} L_m (t) \log (2+t)
\end{split}
\]
for $1 \le q \le \infty$.
Therefore we obtain \eqref{est-wtr} with $\mu = 0$.
The H\"older inequality completes the proof.\hfill$\square$\\

Next, we show Theorem \ref{thm-main}.\\

\paragraph{\it Proof of Theorem \ref{thm-main}.}
From \eqref{s0}, we expand $u$ as
\[
	u_j (t)
	=
	\sum_{m=1}^n \left( U_{j;m} + U_{j;m}^T \right) (t)
	+
	r_{0,n} (t) + r_{1,n} (t) + r_{2,n} (t) + r_{3,n} (t) + r_{4,n} (t),
\]
where $r_{0,n},\ldots,r_{3,n}$ are defined as in the proof of Theorem \ref{thm-st}, and 
\[
\begin{split}
	r_{4,n} (t)
	=&
	\sum_{m=1}^n \bigl( U_{j;m}^S - U_{j;m}^T \bigr)(t)
	=
	\sum_{2l+|\beta|=1}^n \sum_{k,h=1}^n \frac{\partial_t^l \nabla^\beta R_k R_j G(t)}{l!\beta!} \int_t^\infty \int_{\mathbb{R}^n}
		(-s)^l (-y)^\beta (\omega_{hk} u_h) (s,y)
	dyds.
\end{split}
\]
Moreover, we expand $r_{1,n}$ and $r_{4,n}$, then, from \eqref{int-I},
\[
\begin{split}
	r_{1,n} (t)
	=&
	- \sum_{k,h = 1}^n \int_0^t \int_{\mathbb{R}^n}
		\biggl(
			R_k R_j G(t-s,x-y) - \sum_{2l + |\beta| = 0}^{n+1} \frac{\partial_t^l \nabla^\beta R_k R_j G (t,x)}{l! \beta!} (-s)^l (-y)^\beta
		\biggr)
		\mathcal{I}_{hk;n+3} (s, y)
	dyds\\
	&
	+ U_{j;n+1}^T (t)\\
	&-
	\sum_{k,h=1}^n \sum_{2l + |\beta| = n+1} \frac{\partial_t^l \nabla^\beta R_k R_j G(t,x)}{l!\beta!}
	\int_0^t s^l (1+s)^{-\frac{n}2 - \frac32 + \frac{|\beta|}2} ds
	\int_{\mathbb{R}^n}
		(-1)^l (-y)^\beta \mathcal{I}_{hk;n+3} (1,y)
	dy\\
	&+r_{1,n+1} (t) + r_{1,n+1}^T (t),\\
	r_{4,n} (t)
	=&
	\sum_{2l+|\beta| = 1}^n \sum_{k,h=1}^n \frac{\partial_t^l \nabla^\beta R_k R_j G(t)}{l!\beta!} \int_t^\infty \int_{\mathbb{R}^n}
		(-s)^l (-y)^\beta \mathcal{I}_{hk;n+3} (s,y)
	dyds
	+ r_{4,n+1} (t),
\end{split}
\]
where
\[
\begin{split}
	r_{1,n+1}^T (t)
	&=
	U_{j;n+1}^S (t) - U_{j;n+1}^T (t)\\
	&=
	\sum_{2l + |\beta| = n+1} \sum_{k,h=1}^n \frac{\partial_t^l \nabla^\beta R_k R_j G(t,x)}{l!\beta!} \int_t^\infty \int_{\mathbb{R}^n}
		(-s)^l (-y)^\beta
		\left( \omega_{hk} u_h (s,y) - \mathcal{I}_{hk;n+3} (1+s, y) \right)
	dyds,\\
	r_{4,n+1} (t)
	&=
	\sum_{2l+|\beta|=1}^n \sum_{k,h=1}^n \frac{\partial_t^l \nabla^\beta R_k R_j G(t)}{l!\beta!} \int_t^\infty \int_{\mathbb{R}^n}
		(-s)^l (-y)^\beta \left( \omega_{hk} u_h(s,y) - \mathcal{I}_{hk;n+3} (s,y) \right)
	dyds.
\end{split}
\]
The other terms $r_{0,n}, r_{2,n}$ and $r_{3,n}$ are treated as in the proof of Theorem \ref{thm-st}, hence we see for $1 \le m \le n$ that
\begin{equation}\label{exp-t}
\begin{split}
	u_j (t)
	=&
	\sum_{k=1}^{n+m} \left( U_{j,k} + U_{j,k}^T \right) (t)
	+
	\sum_{k=1}^m \left( K_{j,n+k} + V_{j,n+k} + V_{j,n+k}^T + J_{j,n+k} \right) (t)
	+
	\sum_{k=3}^m \tilde{V}_{j,n+k}^T (t)\\
	&+
	r_{0,n+m} + r_{1,n+m} (t) + r_{1,n+m}^T (t) + r_{2,n+m} (t) + r_{3,n+m} (t) + r_{4,n+m} (t),
\end{split}
\end{equation}
where
\[
\begin{split}
	r_{1,n+m}^T (t)
	=&
	U_{j;n+m}^S (t) - U_{j;n+m}^T (t)\\
	&+
	\sum_{2l+|\beta|=n+1}^{n+m-1} \sum_{k,h=1}^n \frac{\partial_t^l \nabla^\beta R_k R_j G(t)}{l!\beta!} \int_t^\infty \int_{\mathbb{R}^n}
		(-s)^l (-y)^\beta \biggl( \omega_{hk} u_h - \sum_{p=3}^{m+2} \mathcal{I}_{hk;n+p} \biggr) (s,y)
	dyds\\
	&-
	\sum_{2l+|\beta|=n+m-1} \sum_{k,h=1}^n \frac{\partial_t^l \nabla^\beta R_k R_j G(t)}{l!\beta!}
	\int_t^\infty s^l \bigl( (1+s)^{-\frac{n}2 - \frac{m}2 - \frac12 + \frac{|\beta|}2} - s^{-\frac{n}2 - \frac{m}2 - \frac12 + \frac{|\beta|}2} \bigr) ds\\
	&\hspace{5mm}\times
	\int_{\mathbb{R}^n} (-1)^l (-y)^\beta \mathcal{I}_{hk;n+m+1} (1,y) dy,\\
	r_{4,n+m} (t)
	=&
	\sum_{2l+|\beta|=1}^n \sum_{k,h=1}^n \frac{\partial_t^l \nabla^\beta R_k R_j G(t)}{l!\beta!} \int_t^\infty \int_{\mathbb{R}^n}
		(-s)^l (-y)^\beta \biggl( \omega_{hk} u_h - \sum_{p=3}^{m+2} \mathcal{I}_{hk;n+p} \biggr) (s,y)
	dyds.
\end{split}
\]
Indeed, for $1 \le m \le n-1$, the first term of $r_{1,n+m}^T$ is split to
\[
\begin{split}
	&U_{j;n+m}^S (t) - U_{j;n+m}^T (t)\\
	&=\sum_{2l+|\beta| = n+m} \sum_{k,h=1}^n \frac{\partial_t^l \nabla^\beta R_k R_j G(t)}{l!\beta!} \int_t^\infty \int_{\mathbb{R}^n}
		(-s)^l (-y)^\beta \bigl( \omega_{hk} u_h (s,y) - \sum_{p=3}^{m+1} \mathcal{I}_{hk;n+p} (s,y)\\
		&\hspace{15mm}-\mathcal{I}_{hk;n+m+2} (1+s,y) \bigr)
	dyds\\
	&=
	\sum_{2l+|\beta|=n+m} \sum_{k,h=1}^n
	\frac{2t^{- \frac12} \partial_t^l \nabla^\beta R_k R_j G (t)}{l!\beta!} \int_{\mathbb{R}^n} (-1)^l (-y)^\beta \mathcal{I}_{hk;n+m+3} (1,y) dy\\
	&-
	\sum_{2l+|\beta|=n+m} \sum_{k,h=1}^n \frac{\partial_t^l \nabla^\beta R_k R_j G(t)}{l!\beta!}
	\int_t^\infty
		s^l \left( (1+s)^{-\frac{n}2 - \frac{m}2 -1 + \frac{|\beta|}2} - s^{-\frac{n}2 - \frac{m}2 -1 + \frac{|\beta|}2} \right)
	ds
	\int_{\mathbb{R}^n}
		\mathcal{I}_{hk;n+m+2} (1,y)
	dy\\
	&+
	\sum_{2l+|\beta|=n+m} \sum_{k,h=1}^n \frac{\partial_t^l \nabla^\beta R_k R_j G(t)}{l!\beta!} \int_t^\infty \int_{\mathbb{R}^n}
		(-s)^l (-y)^\beta \biggl( \omega_{hk} u_h - \sum_{p=3}^{m+3} \mathcal{I}_{hk;n+p} \biggr) (s,y)
	dyds.
\end{split}
\]
We split the second term of $r_{1,n+m}^T$, then
\[
\begin{split}
	&\sum_{2l+|\beta|=n+1}^{n+m-1} \sum_{k,h=1}^n \frac{\partial_t^l \nabla^\beta R_k R_j G(t)}{l!\beta!} \int_t^\infty \int_{\mathbb{R}^n}
		(-s)^l (-y)^\beta \biggl( \omega_{hk} u_h - \sum_{p=3}^{m+2} \mathcal{I}_{hk;n+p} \biggr) (s,y)
	dyds\\
	=&
	\sum_{2l+|\beta|=n+1}^{n+m-1} \sum_{k,h=1}^n \frac{2t^{-\frac{n}2 - \frac{m}2 - \frac12 + l + \frac{|\beta|}2}}{n+m+1 - 2l - |\beta|} \frac{\partial_t^l \nabla^\beta R_k R_j G(t)}{l!\beta!} \int_{\mathbb{R}^n}
		(-1)^l (-y)^\beta \mathcal{I}_{hk;n+m+3} (1,y)
	dy\\
	&+
	\sum_{2l+|\beta|=n+1}^{n+m-1} \sum_{k,h=1}^n \frac{\partial_t^l \nabla^\beta R_k R_j G(t)}{l!\beta!} \int_t^\infty \int_{\mathbb{R}^n}
		(-s)^l (-y)^\beta \biggl( \omega_{hk} u_h - \sum_{jp=3}^{m+3} \mathcal{I}_{hk;n+p} \biggr) (s,y)
	dyds.
\end{split}
\]
The last term of $r_{1,n+m}^T$ is $\tilde{V}_{j;n+m+1}^T$.
Moreover, from \eqref{int-I},
\[
\begin{split}
	r_{4,n+m} (t)
	=&
	\sum_{2l+|\beta|=1}^n \sum_{k,h=1}^n \frac{2t^{-\frac{n}2-\frac{m}2-\frac12+l+\frac{|\beta|}2}}{n+m+1-2l-|\beta|} \frac{\partial_t^l \nabla^\beta R_k R_j G(t)}{l!\beta!} \int_{\mathbb{R}^n}
		(-1)^l (-y)^\beta \mathcal{I}_{hk;n+m+3} (1,y)
	dy\\
	&+ r_{4,n+m+1} (t).
\end{split}
\]
The other terms are treated in the similar way as in the proof of Theorem \ref{thm-st}.
Thus
\[
\begin{split}
	&r_{0,n+m} + r_{1,n+m}^T + r_{1,n+m} + \cdots + r_{4,n+m}\\
	=&
	K_{j;n+m+1} + U_{j;n+m+1} + U_{j;n+m+1}^T + V_{j;n+m+1} + V_{j;n+m+1}^T + \tilde{V}_{j;n+m+1}^T + J_{j;n+m+1}\\
	&+ r_{0,n+m+1} + r_{1,n+m+1}^T + r_{1,n+m+1} + \cdots  + r_{4,n+m+1}
\end{split}
\]
for $1 \le m \le n-1$, and \eqref{exp-t} holds.
Similar estimates for $r_{2,n+m}$ and $r_{3,n+m}$ as in the proof of Theorem \ref{thm-st} provide that
\[
	\bigl\| r_{1,n+m}^T (t) \bigr\|_{L^q (\mathbb{R}^n)} + \bigl\| r_{4,n+m} (t) \bigr\|_{L^q (\mathbb{R}^n)}
	=
	o \bigl( t^{-\frac{n}2 (1-\frac1q) - \frac{n}2 - \frac{m}2} \bigr)
\]
as $t \to + \infty$ for $1 \le q \le \infty$.
We already treated the other terms $r_{0,n+m},\ldots,r_{3,n+m}$ in the proof of Theorem \ref{thm-st}.
Therefore we complete the proof.\hfill$\square$

\end{document}